\title{Sums of GUE matrices and concentration of hives from correlation decay of eigengaps}
\author{Hariharan Narayanan\thanks{School of Technology and Computer Science, TIFR Mumbai} \and Scott Sheffield\thanks{Department of Mathematics,  MIT} \and Terence Tao\thanks{Department of Mathematics,  UCLA}}
\date{\today}
\begin{document}
\newtheorem{theorem}{Theorem}
\newtheorem{lemma}[theorem]{Lemma}
\newtheorem{conjecture}[theorem]{Conjecture}
\newtheorem{question}[theorem]{Question}
\newcommand{\Prob}{\ensuremath{\text{Pr}}}
\newcommand{\E}{\ensuremath{\mathbb E}}
\newcommand{\R}{\ensuremath{\mathbb R}}
\newcommand{\cX}{\ensuremath{\mathcal X}}
\newcommand{\cT}{\ensuremath{\mathcal T}}
\newcommand{\K}{\ensuremath{\mathcal K}}
\newcommand{\F}{\ensuremath{\psi}}
\newcommand{\tr}{\ensuremath{{\scriptscriptstyle\mathsf{T}}}}
\newcommand{\inner}[1]{\left\langle #1 \right\rangle}

\newcommand{\tw}{\tilde{w}}
\newcommand{\lab}{\label} \newcommand{\M}{\ensuremath{\mathcal M}} \newcommand{\ra}{\ensuremath{\rightarrow}} \def\eee{{\mathrm e}} \def\a{{\mathbf{\alpha}}} \def\de{{\mathbf{\delta}}} \def\De{{{\Delta}}}  \def\m{{\mathbf{\mu}}}
\def\tm{{\tilde{\mu}}} \def\var{{\mathrm{var}}} \def\beq{\begin{eqnarray}} \def\eeq{\end{eqnarray}} \def\ben{\begin{enumerate}}
\def\een{\end{enumerate}} \def\bit{\begin{itemize}}
\def\bel{\begin{lemma}}
\def\eel{\end{lemma}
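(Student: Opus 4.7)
The plan is to reduce the concentration of the random hive to simultaneous concentration of many partial sums of eigenvalues of principal submatrices of an augmented GUE matrix, and then to exploit correlation decay of eigengaps to beat the trivial union bound over the $O(n^2)$ hive entries.

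First I would write the hive values $h(i,j,k)$ via the Knutson--Tao/Speyer description: for the input pair $(A,B)$ of independent GUE matrices, each hive entry equals a partial sum $\sum_{l=1}^{r(i,j,k)} \lambda_l(M_{i,j,k})$ of the top eigenvalues of an appropriate principal submatrix $M_{i,j,k}$ of the block matrix built from $A$ and $B$. This reduces the hive concentration problem to a statement about the joint fluctuations of such eigenvalue partial sums indexed by the hive lattice.

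Next I would apply GUE rigidity (Erd\H{o}s--Yau--Schlein--Yin, Tao--Vu) to each individual eigenvalue $\lambda_l(M_{i,j,k})$, placing it within $n^{-1/2+o(1)}$ of its classical location in the bulk. A naive summation and union bound yields additive control on each hive entry, but is too weak to control the \emph{second differences} (the concavity defects) required by the hive condition, nor is it sharp enough for the concentration rate claimed. The fundamental improvement would come from the correlation decay of eigengaps: writing each hive entry (or rather its discrete Laplacian) as a telescoping sum of eigengaps $\lambda_l - \lambda_{l+1}$, one uses that eigengaps at well-separated positions $l$, $l'$ are near-independent, so that their aggregate fluctuations obey a central-limit-type cancellation rather than accumulating linearly.

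I would then implement this cancellation via a chaining or block-martingale scheme over the hive lattice: group nearby hive vertices into scales, use covariance bounds from the correlation decay hypothesis at each scale, and sum geometrically. A union bound over $O(n^2)$ vertices and over dyadic scales then gives a uniform concentration statement, provided the decay is polynomial with a high enough exponent.

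The main obstacle is the sharpness of the eigengap correlation decay: in the bulk, gap correlations decay only polynomially (like $|l-l'|^{-2}$ heuristically), so the chaining argument must be careful not to lose logarithmic factors that would swamp the target fluctuation scale. Handling the edge regime and the degenerate boundary of the hive triangle, where classical rigidity degrades, will also require separate case analysis, likely by reducing to the bulk regime via monotonicity or by using the Tracy--Widom-scale rigidity at the edge.
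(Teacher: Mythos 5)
Your proposal targets Theorem~\ref{mainthm}, but the structural reduction in the opening step is not correct, and the gap propagates. An interior augmented-hive entry is \emph{not} a single partial sum of eigenvalues of one principal submatrix built from $A$ and $B$. After the octahedron recurrence (Theorem~\ref{excav}, Speyer's formula), $h(v)$ is the \emph{maximum}, over all lozenge tilings $\Xi$ of an excavation hexagon $\hexagon_v$, of a family of linear functionals $w_\Xi$ applied to the pair of Gelfand--Tsetlin patterns coming from the GUE minor processes of $A$ and $B$. Each individual $w_\Xi$ is indeed a signed combination of minor eigenvalues and eigengaps, but the maximum over (exponentially many) tilings is the central obstacle, and your outline has no mechanism for controlling it. Rigidity and eigengap covariance decay can bound $\Var w_\Xi$ for one fixed $\Xi$, but they do not by themselves bound $\Var \max_\Xi w_\Xi$. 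Moreover, your chaining scheme is aimed at a supremum over hive vertices $v$, but the theorem needs no such supremum (the variance bound is required pointwise in $v$); the supremum that must be tamed lives \emph{inside} each $h(v)$, over tilings $\Xi$.

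The paper handles that supremum with Lemma~\ref{lem:17}: because the density of the GUE minor process is log-concave and each $w_\Xi$ is affine, one gets
$\Var(\max_\Xi w_\Xi) \ll \bigl(\sup_\Xi \Var w_\Xi\bigr)\log(2+d)$
via the Cheeger/Poincar\'e inequality for log-concave measures, with Klartag's near-resolution of the KLS conjecture supplying the needed lower bound on the Cheeger constant. This is the key missing idea in your proposal, and there is no obvious substitute: a block-martingale or chaining decomposition of a single linear statistic does not interact with the tropical maximum at all. Your remaining ingredients do correspond to later steps of the paper --- eigenvalue rigidity to peel off edge contributions, the Cipolloni--Erd\H{o}s--Schr\"oder covariance decay of bulk eigengaps, and a separate treatment of the edge where those bounds degrade --- but in the paper these are applied \emph{after} Lemma~\ref{lem:17} has reduced matters to a fixed linear statistic $w_\Xi(g,g')$, and the covariance estimates are extracted through explicit determinantal/contour-integral computations for the minor process rather than through a dyadic chaining over scales.
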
}
\def\eit{\end{itemize}} \def\beqs{\begin{eqnarray*}} \def\eeqs{\end{eqnarray*}} \def\bel{\begin{lemma}} \def\eel{\end{lemma}}
\newcommand{\N}{\mathbb{N}} \newcommand{\Z}{\mathbb{Z}} \newcommand{\Q}{\mathbb{Q}} \newcommand{\C}{\mathcal{C}} \newcommand{\CC}{\mathcal{C}}
\newcommand{\T}{\mathbb{T}} \newcommand{\A}{\mathbb{A}} \newcommand{\x}{\mathbf{x}} \newcommand{\y}{\mathbf{y}} \newcommand{\z}{\mathbf{z}}  \renewcommand{\b}{\mathbf{b}} 
\newcommand{\tb}{\tilde{\mathbf{b}}}
\newcommand{\n}{\mathbf{n}} \newcommand{\I}{I} \newcommand{\II}{\mathcal{I}} \newcommand{\EE}{\mathbb{E}} \newcommand{\p}{\mathbb{P}}
\newcommand{\PP}{\mathcal P} \newcommand{\BB}{\mathcal B} \newcommand{\HH}{\mathcal H} \newcommand{\e}{\mathbf{e}} \newcommand{\one}{\mathrm{1}}
\newcommand{\LL}{\Delta} \newcommand{\MM}{\mathcal M}\newcommand{\NN}{\mathcal N} \newcommand{\la}{\lambda} \newcommand{\Tr}{\text{Trace}} \newcommand{\aaa}{\alpha}
\def\eee{{\mathrm e}}  \def\eps{{\epsilon}} \def\A{{\mathcal{A}}} \def\ie{i.\,e.\,} \def\g{G}
\def\vol{\mathrm{vol}}\newcommand{\tc}{{\tilde{c}}}
\newcommand{\tP}{\tilde{P}}
\newcommand{\sig}{\sigma}
\newcommand{\wn}{w^{(n)}}
\newcommand{\wone}{w^{(n_1)}}
\newcommand{\rr}{\mathbf{r}}
\newcommand{\tf}{\hat{F}}
\newcommand{\tG}{\hat{G}}
\newcommand{\tg}{\hat{G}}
\newcommand{\tv}{\hat{V}}
\newcommand{\tk}{\hat{K}}
\newcommand{\rmm}{\rho_{min}}
\newcommand{\La}{\Lambda}
\newcommand{\sy}{\mathcal{S}_y}
\newcommand{\cc}{\mathbf{c}}
\newcommand{\RR}{\mathbb{R}}
\newcommand{\dist}{\mathbf{d}}
\newcommand{\G}{\mathcal{G}} \newcommand{\fat}{\mathrm{fat}}
\newcommand{\cuf}{\textsf{CurvFit}}
\newcommand{\hf}{{\mathcal{H}}_{f}}
\newcommand{\hb}{{\mathcal{H}}_{b}}
\newcommand{\s}{\mathcal{S}}
\newcommand{\us}{\underline{s}}
\renewcommand{\u}{u}
\renewcommand{\v}{v}
\newcommand{\J}{\mathcal{J}}
\renewcommand{\sf}{\mathcal{sf}}
\renewcommand{\g}{\mathcal{G}}
\renewcommand{\L}{\mathbb{L}} 
\renewcommand{\i}{k}
\renewcommand{\j}{\ell}
\newcommand{\ubn}{\hat{u}^{(n_1)}_b}
\renewcommand{\d}{\partial}
\newcommand{\dperp}{\partial_\perp}
\newcommand{\curl}{A}
\renewcommand{\div}{\nabla_\bullet}
\renewcommand{\L}{\mathbb L}
\renewcommand{\C}{\mathcal C}
\newcommand{\f}{\mathbf{f}}
\newcommand{\ta}{\tilde{a}}

\newcommand{\tA}{\tilde{A}}
\newcommand{\ent}{\mathrm{ent}}
\newcommand{\hess}{\nabla^2}

\theoremstyle{plain}

\newtheorem{thm}{Theorem}
\newtheorem{corollary}{Corollary}
\newtheorem{proposition}{Proposition}
\newtheorem{defn}{Definition}
\newtheorem{definition}{Definition}
\newtheorem{example}{Example}
\newtheorem{remark}{Remark}
\newtheorem{claim}{Claim}
\newtheorem{obs}{Observation}
\numberwithin{equation}{section}
\numberwithin{figure}{section}
\renewcommand{\a}{\alpha}
\renewcommand{\b}{\beta}
\renewcommand{\g}{\gamma}
\newcommand{\Ia}{I_\alpha}
\newcommand{\Ib}{I_\beta}
\newcommand{\Ig}{I_\gamma}
\newcommand{\spec}{\mathtt{spec}}
\newcommand{\Spec}{\mathtt{Spec}}
\newcommand{\Leb}{\mathrm{Leb}}
\newcommand{\diag}{\mathrm{diag}}
\renewcommand{\ta}{\tilde{\alpha}}
\renewcommand{\tb}{\tilde{\beta}}
\renewcommand{\tg}{\tilde{\gamma}}
\newcommand{\tla}{\tilde{\lambda}}
\newcommand{\tmu}{\tilde{\mu}}
\newcommand{\tnu}{\tilde{\nu}}
\newcommand{\tilh}{\tilde{h}}
\newcommand{\stp}{\mathrm{stop}}
\newcommand{\HIVE}{\mathtt{HIVE}}
\newcommand{\AHIVE}{\mathtt{AUGHIVE}}
\newcommand{\HORN}{\mathtt{HORN}}
\newcommand{\oct}{\mathbf{oct}}
\newcommand{\GT}{\mathtt{GT}}
\newcommand{\GL}{\mathrm{GL}}
\newcommand{\Var}{\operatorname{var}}
\newcommand{\cov}{\operatorname{cov}}
\newcommand{\CZ}{\mathrm{CZ}}
\newcommand{\lf}{\mathrm{lf}}
\newcommand{\weight}{\mathbf{wt}}
\newcommand{\edge}{{\diamond}}
 \renewcommand{\Tr}{\mathbf{tr}}
\renewcommand{\F}{\mathcal F}
\renewcommand{\RR}{\mathcal R}
\newcommand{\tX}{\tilde{X}}
\newcommand{\tY}{\tilde{Y}}
\newcommand{\D}{\mathcal{D}}
\newcommand{\bb}{\mathbf b}
\newcommand{\tbbb}{\mathbf \tilde{b}}
\newcommand{\wred}{w_0^{\mathrm{red}}}
\newcommand{\wblue}{w_0^{\mathrm{blue}}}
\newcommand{\wgreen}{w_0^{\mathrm{green}}}
\renewcommand{\P}{\mathbb{P}}
\newcommand{\rel}{\to}

\maketitle
\begin{abstract}  Associated to two given sequences of eigenvalues $\lambda_1 \geq \dots \geq \lambda_n$ and $\mu_1 \geq \dots \geq \mu_n$ is a natural polytope, the polytope of \emph{augmented hives} with the specified boundary data, which is associated to sums of random Hermitian matrices with these eigenvalues.  As a first step towards the asymptotic analysis of random hives, we show that if the eigenvalues are drawn from the GUE ensemble, then the associated augmented hives exhibit concentration as $n \ra \infty$.  Our main ingredients include a representation due to Speyer of augmented hives involving a supremum of linear functions applied to a product of Gelfand--Tsetlin polytopes; known results by Klartag on the KLS conjecture in order to handle the aforementioned supremum; covariance bounds of Cipolloni--Erd\H{o}s--Schr\"oder of eigenvalue gaps of GUE; and the use of the theory of determinantal processes to analyze the GUE minor process.
\end{abstract}
\newpage
\tableofcontents

\section{Introduction}

\subsection{Sums of Hermitian matrices, Horn probability measures, and (augmented) hives}

Throughout this paper, $n \geq 2$ is a fixed dimension (which one should think of as being large).

Let $\Spec$ denote the cone of all possible tuples $x$ in $\R^n$ such that 
$$x_1 \geq x_2 \geq \dots \geq x_n.$$
Here and in the sequel, whenever we use a symbol such as $x$ to denote a vector in $\R^n$, we use $x_1,\dots,x_n$ to denote its components, and similarly for other symbols such as $\lambda, \mu, \nu, \pi, \sigma, a$, etc..  We let $\Spec^\circ$ denote the interior of $\Spec$, that is to say the cone of tuples $x = (x_1, \dots, x_n)$ with
$$x_1 > x_2 > \dots > x_n.$$
Note that the eigenvalues of an $n \times n$ Hermitian matrix, ordered in non-increasing order, become an element of $\Spec$.  Let us define a relation
\begin{equation}\label{rel}
 \lambda \boxplus \mu \rel \nu
\end{equation}
if there exist Hermitian matrices $A,B$ with eigenvalues $\la, \mu$ respectively such that $A+B$ has eigenvalues $\nu$.  Thus for instance $(5,3) \boxplus (3,0) \rel (6,5)$ since $\diag(5,3) + \diag(0,3) = \diag(5,6)$, but also $(5,3) \boxplus (3,0) \rel (8,3)$ since $\diag(5,3) + \diag(3,0) = \diag(8,3)$.  Here one should view $\boxplus, \rel$ as formal symbols\footnote{One can also interpret $\boxplus$ as a hypergroup operation or as a special case of orbital convolution; see e.g., \cite{dooley}.} rather than binary operations or relations, though they are of course suggestively written to invoke analogies with the addition operation $+$ and equality relation $=$ respectively.

 In \cite{Weyl}, Weyl asked the question of determining necessary and sufficient conditions on $\la, \mu, \nu \in \Spec$ for the relation \eqref{rel} to hold.  As conjectured by Horn \cite{Horn} and proven in \cite{KT1}, the set 
$$ \HORN_{\la \boxplus \mu} \coloneqq \{ \nu \in \Spec: \lambda \boxplus \mu \rel \nu \}$$
of possible $\nu$ arising from a given choice of $\la, \mu$ forms a polytope (known as the \emph{Horn polytope}), given by the trace condition
\begin{equation}\label{trace}
 \sum \lambda + \sum \mu = \sum \nu
\end{equation}
(where we abbreviate $\sum \lambda \coloneqq \sum_{i=1}^n \lambda_i$)
together with a recursively defined set of linear inequalities known as the \emph{Horn inequalities}, which include for instance the Weyl inequalities
$$ \nu_{i+j-1} \leq \lambda_{i} + \mu_{j}$$
for $1 \leq i,j,i+j-1 \leq n$, as well as many others.  We refer the reader to \cite{KT2} for a survey of the history of this problem and its resolution.  For $\lambda,\mu \in \Spec^\circ$, the Horn polytope $\HORN_{\la \boxplus \mu}$ is $n-1$-dimensional.

One of the key tools used in the proof of the Horn conjecture in \cite{KT1} is that of a \emph{hive}\footnote{There is also the closely related notion of a \emph{honeycomb}, which we will not use in this paper.}, defined as follows.

\begin{definition}[Hives] A \emph{rhombus} is a quadruple $ABCD$ in the lattice\footnote{It often will be geometrically natural to depict $\Z^2$ as a triangular lattice rather than a rectangular one, though the precise choice of representation of $\Z^2$ as a triangular lattice may vary depending on context.}
 $\Z^2$ of one of the following three forms for some $i,j \in \Z^2$:
\begin{itemize}
\item[(i)] $(A,B,C,D) = ((i,j), (i+1,j), (i+2,j+1), (i+1,j+1))$
\item[(ii)] $(A,B,C,D) = ((i,j), (i+1,j+1), (i+1,j+2), (i,j+1))$
\item[(iii)] $(A,B,C,D) = ((i,j), (i,j-1), (i+1,j-1), (i+1,j))$.
\end{itemize}
We refer to $AC$ as the \emph{long diagonal} of the rhombus and $BD$ as the \emph{short diagonal}; see Figure \ref{fig:hive-schematic}.  A function $h \colon \Omega \to \R$ defined on some subset $\Omega$ of $\Z^2$ is said to be \emph{rhombus-concave} if one has
\begin{equation}\label{rhombus-concave}
 h(A) + h(C) \leq h(B) + h(D)
\end{equation}
for all rhombi $ABCD$ in $\Omega$.  A \emph{hive} is a rhombus concave function $h \colon T \to \R$ defined on the triangle
\begin{equation}\label{T-def}
 T \coloneqq \{ (i,j) \in \Z^2: 0 \leq i \leq j \leq n \}.
\end{equation}
If $\la, \mu, \nu \in \Spec$, we say that a hive $h$ has boundary condition $\lambda \boxplus \mu \rel \nu$ if one has
\begin{align}
 h(0,i) &= \sum_{j=1}^i \la_j \label{boundary-1}\\
 h(i,n) &= \sum \la + \sum_{j=1}^i \mu_j \label{boundary-2}\\
 h(i,i) &= \sum_{j=1}^i \nu_j \label{boundary-3}
\end{align}
for all $0 \leq i \leq n$, and write $\HIVE_{\la \boxplus \mu \rel \nu}$ for the set of all hives with boundary condition
$\lambda \boxplus \mu \rel \nu$ (thus for instance $\HIVE_{\la \boxplus \mu \rel \nu}$ will be empty if \eqref{trace} fails, as the boundary conditions \eqref{boundary-2}, \eqref{boundary-3} then become inconsistent at $i=n$).  These boundary conditions are depicted schematically in Figure \ref{fig:hive-schematic}.  We also adopt the ``wildcard convention'' that replacing a tuple such as $\la$, $\mu$, or $\nu$ with an asterisk $\ast$ denotes the operation of taking unions over all values of that tuple, thus for instance
\begin{align*}
\HIVE_{\la \boxplus \mu \rel \ast} &\coloneqq \bigcup_\nu \HIVE_{\la \boxplus \mu \rel \nu} \\
\HIVE_{\la \boxplus \ast \rel \nu} &\coloneqq \bigcup_\mu \HIVE_{\la \boxplus \mu \rel \nu} \\
\HIVE_{\ast \boxplus \ast \rel \ast} &\coloneqq \bigcup_{\lambda,\mu,\nu} \HIVE_{\la \boxplus \mu \rel \nu} 
\end{align*}
denote the hives with boundary conditions $\lambda \boxplus \mu \rel \ast$, $\lambda \boxplus \ast \rel \nu$,and $\ast \boxplus \ast \rel \ast$ respectively.  Note that all of these sets are convex.
\end{definition}

\begin{figure}
\begin{center}
\includegraphics[scale=0.40]{./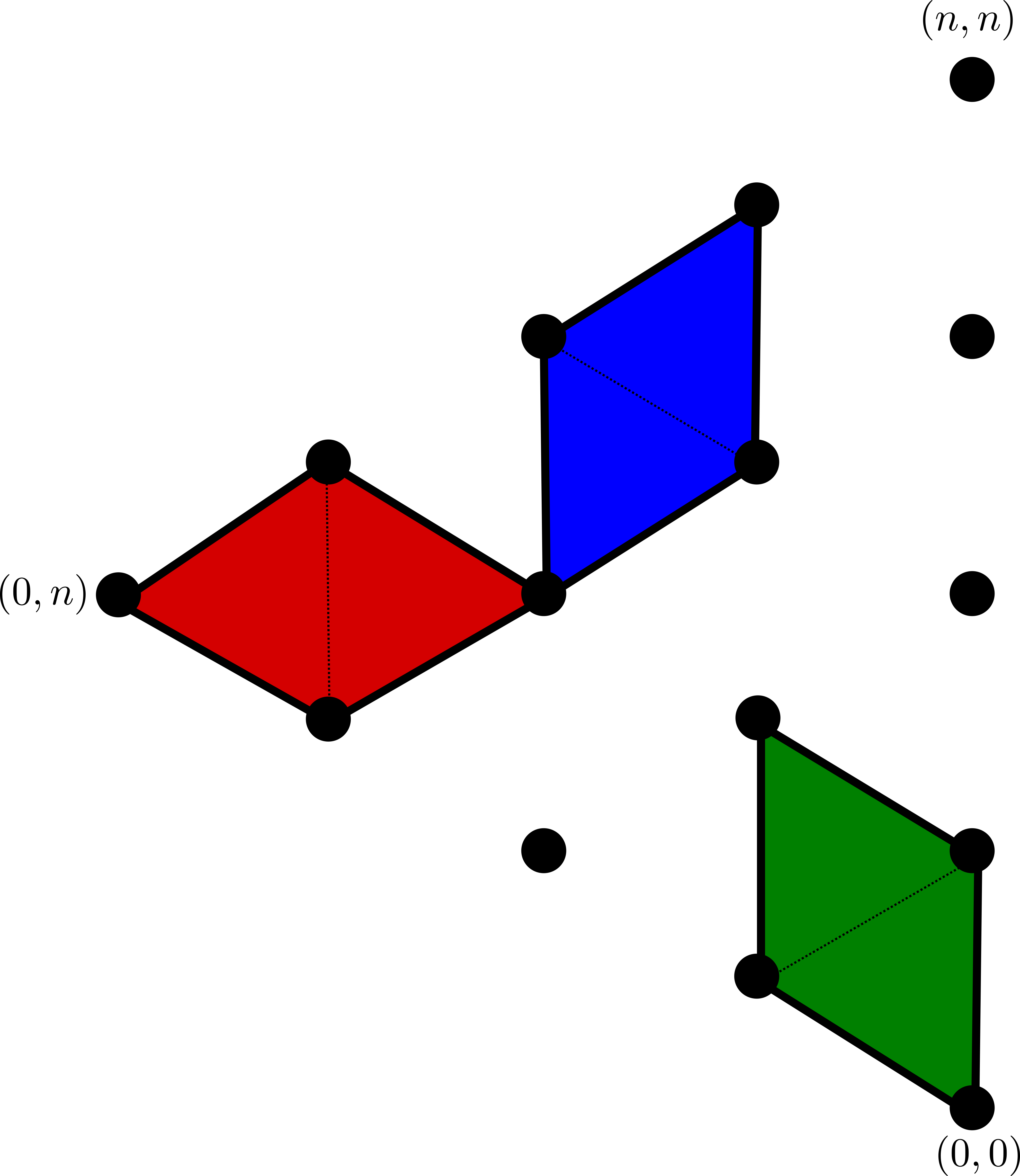}
\caption{The triangular region $T$ with $n=4$, tilted to lie on equilateral lattice (so that a rhombus is precisely the union of two adjacent unit equilateral triangles).  The blue, green, and red regions are rhombi of the form (i), (ii), (iii) respectively, with the dotted lines representing the short diagonals.}\label{fig:rhombus}
\end{center}
\end{figure}

\begin{figure}
\begin{center}
\includegraphics[scale=0.40]{./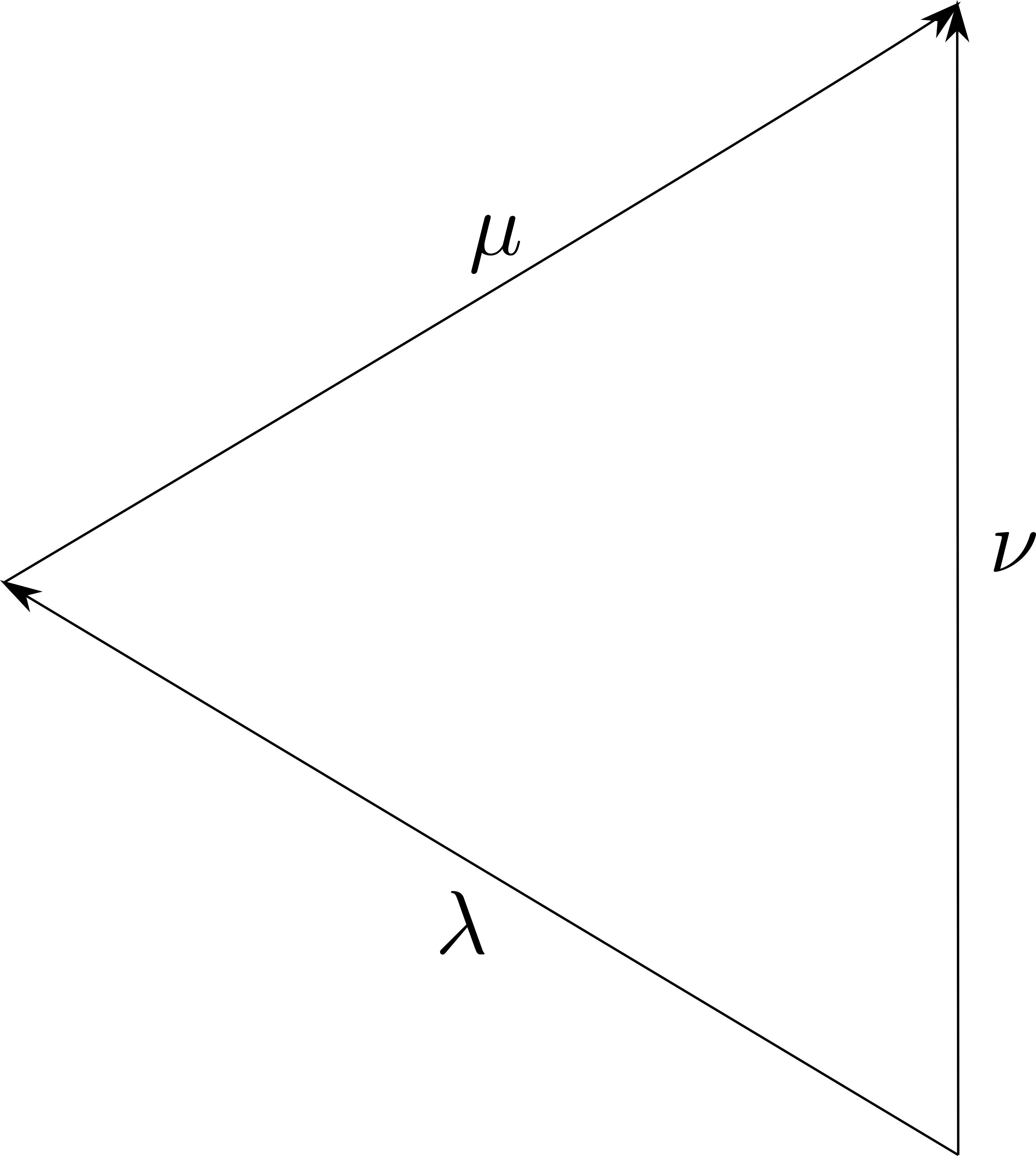}
\caption{A schematic depiction of the boundary condition $\lambda \boxplus \mu \rel \nu$, using the same orientation used in Figure \ref{fig:rhombus}.  Thus, the hive increases according to the tuple $\lambda$ as one moves from the southern vertex $(0,0)$ to the western one $(0,n)$, according to the tuple $\mu$ as one moves from the western vertex $(0,n)$ to the northern vertex $(n,n)$, and according to the tuple $\nu$ as  one moves from the southern vertex $(0,0)$ to the northern vertex $(n,n)$.}\label{fig:hive-schematic}
\end{center}
\end{figure}

\begin{figure}
\begin{center}
\includegraphics[scale=0.40]{./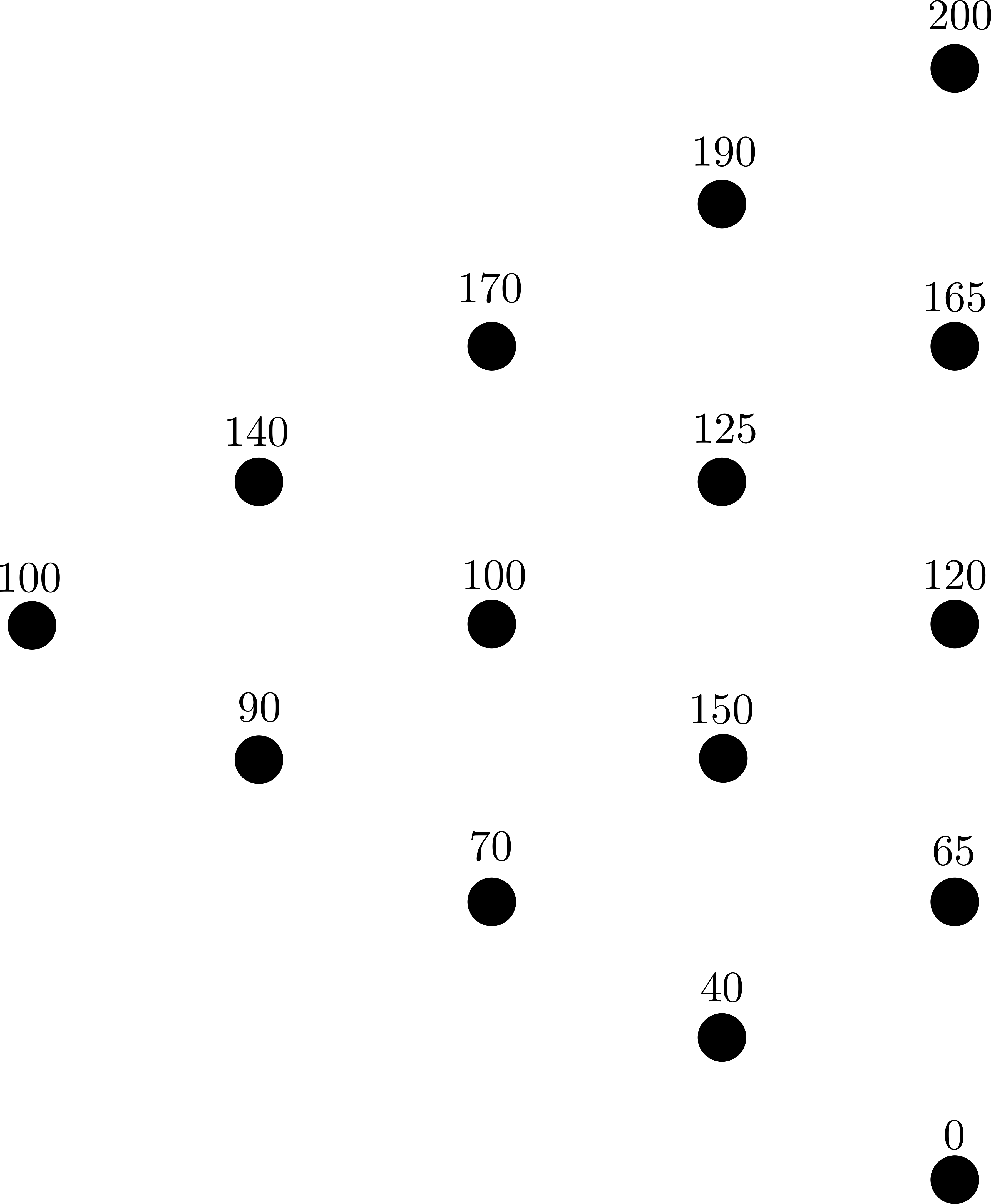}
\caption{A hive with boundary condition $(40, 30, 20, 10) \boxplus (40,30,20,10) \rel (65,55,45,35)$.}\label{fig:hive-example}
\end{center}
\end{figure}

The key relationship between hives and the Weyl problem is given by

\begin{proposition}\label{weyl-prop}  Let $n \geq 1$ and $\la, \mu, \nu \in \Spec$.  Then $\la \boxplus \mu \rel \nu$ if and only if $\HIVE_{\la \boxplus \mu \rel \nu}$ is non-empty.  Equivalently, $\HORN_{\la \boxplus \mu}$ is the projection of $\HIVE_{\la \boxplus \mu \rel \ast}$ under the linear map that sends $\HIVE_{\la \boxplus \mu \rel \nu}$ to $\nu$ for all $\nu$.
\end{proposition}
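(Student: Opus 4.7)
The plan is to establish the equivalence via the classical route through Littlewood--Richardson (LR) coefficients, with a reduction from real to integer boundary data at the outset. Both $\{(\lambda,\mu,\nu) : \lambda \boxplus \mu \rel \nu\}$ (the Horn cone) and $\{(\lambda,\mu,\nu): \HIVE_{\lambda \boxplus \mu \rel \nu} \neq \emptyset\}$ are closed convex cones in $\Spec \times \Spec \times \Spec$ (the former because the set of triples of Hermitian matrices with $A+B=C$ is closed under scalar multiplication, direct sum, and is a closed subset of triples of Hermitian matrices; the latter because hives are defined by linear inequalities in the hive values and boundary data). So by homogeneity under integer scaling and continuity in the boundary data, it suffices to verify the equivalence on integer tuples $\lambda,\mu,\nu \in \Spec \cap \Z^n$.

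For integer boundary data I would invoke the bijection between integer-valued hives with boundary $\lambda \boxplus \mu \rel \nu$ and Littlewood--Richardson tableaux (or equivalently honeycombs or puzzles), yielding $|\HIVE_{\lambda \boxplus \mu \rel \nu} \cap \Z^T| = c_{\lambda\mu}^\nu$, the multiplicity of the irreducible $U(n)$-representation $V_\nu$ inside $V_\lambda \otimes V_\mu$. The equivalence $c_{\lambda\mu}^\nu > 0 \iff \lambda \boxplus \mu \rel \nu$ then follows by combining (i) the orbit method / Guillemin--Sternberg principle that $c_{N\lambda,N\mu}^{N\nu} > 0$ for some $N \geq 1$ if and only if Hermitian matrices with the prescribed spectra exist, with (ii) the Knutson--Tao saturation theorem $c_{N\lambda,N\mu}^{N\nu} > 0 \iff c_{\lambda\mu}^\nu > 0$.

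An alternative, more geometric route for the direction ``$\lambda \boxplus \mu \rel \nu \Rightarrow \HIVE_{\lambda \boxplus \mu \rel \nu} \neq \emptyset$'', which foreshadows tools used later in the paper, is to construct a hive directly from $A + B = C$ Hermitian with spectra $\lambda,\mu,\nu$: the eigenvalues of the nested principal minors of $A$, $B$, $C$ form three Gelfand--Tsetlin patterns, and one can assemble a hive from these via Speyer's representation (a supremum of linear functionals on the associated triple of GT polytopes), with rhombus-concavity verified via the Cauchy interlacing inequalities. The equivalent reformulation in terms of the projection map is immediate once the main equivalence is in hand, since by definition the $\nu$-projection of $\HIVE_{\lambda \boxplus \mu \rel \ast}$ is $\{\nu : \HIVE_{\lambda \boxplus \mu \rel \nu} \neq \emptyset\}$, which by the equivalence equals $\HORN_{\lambda \boxplus \mu}$.

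The main obstacle I foresee is the saturation theorem, which is the deep and genuinely non-obvious combinatorial input: it is what converts the qualitative non-emptiness of the integer hive polytope into a geometric statement about Hermitian matrix sums, with no loss incurred in passing between $(\lambda,\mu,\nu)$ and $(N\lambda,N\mu,N\nu)$. By comparison, the hive-to-tableau bijection is standard algebraic combinatorics, the Speyer construction is explicit (though the rhombus-concavity verification is a nontrivial bookkeeping exercise), and the closure/continuity arguments reducing real boundary data to the integer case are routine.
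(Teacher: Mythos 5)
The paper proves this proposition by citation (to \cite[Appendix 8]{KTW}, \cite[Appendix 2]{KT1}, with an alternate proof in \cite{Speyer-Horn}). Your main route---reduction to integer boundary data, the hive/Littlewood--Richardson bijection, the orbit method, and the Knutson--Tao saturation theorem---is essentially the \cite{KT1} argument, and you correctly identify saturation as the crux.

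There are, however, genuine gaps. The ``direct sum'' argument does not give convexity of the Horn cone for fixed $n$: a direct sum of $n \times n$ triples is a $2n \times 2n$ triple, and its spectrum is the interleaved union, not the coordinatewise sum of the spectra, so this does not show the fixed-$n$ Horn cone is closed under addition. Convexity is itself a theorem (moment-map convexity, or a posteriori from the hive description). More seriously, the reduction from real to rational boundary data in the direction $\lambda \boxplus \mu \rel \nu \Rightarrow \HIVE_{\lambda \boxplus \mu \rel \nu} \neq \emptyset$ requires approximating an arbitrary real triple in the Horn relation by \emph{rational} triples in the Horn relation, and the needed density of rational points in the Horn cone is not automatic: it follows from the cone being rational polyhedral (Klyachko / the Horn inequalities), which must be supplied as input rather than dismissed as routine. (The reverse direction does extend easily, since the hive cone is manifestly rational polyhedral and one can approximate a real hive by rational hives.) Finally, your proposed alternative route does not work as stated: the octahedron recurrence / Speyer formula, applied to the Gelfand--Tsetlin patterns of the nested minors of $A$ and $B$, produces an augmented hive whose middle boundary $\nu$ is determined combinatorially by those two patterns and bears no a priori relation to the spectrum of $A+B$; the recurrence is a measure-preserving bijection of polytopes, not a matrix-analytic map. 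There is no elementary assembly of a hive with the required boundary from the three GT patterns of $A$, $B$, $C$---which is precisely why this implication is the deep one requiring saturation (or, in \cite{Speyer-Horn}, Vinnikov curves).
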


\begin{proof}  See \cite[Appendix 8]{KTW} and \cite[Appendix 2]{KT1}.  An alternate proof is provided in \cite{Speyer-Horn}.
\end{proof}

We remark that for $\la, \mu \in \Spec^\circ_n$, $\HIVE_{\lambda \boxplus \mu \rel \nu}$ is $\binom{n-1}{2}$-dimensional for $\nu$ in the interior of $\HORN_{\lambda \boxplus \mu}$, and $\HIVE_{\la \boxplus \mu \rel \ast}$ is $\binom{n}{2}$-dimensional.  Intuitively, the hive polytope $\HIVE_{\la \boxplus \mu \rel \nu}$ represents the possible ways in which the relation $\la \boxplus \mu \rel \nu$ can hold.

\begin{remark} There is a discrete analogue of the above theory (in the spirit of the Kirillov orbit method): when $\lambda,\mu,\nu$ take values in the non-negative integers, then the \emph{Littlewood--Richardson coefficient} $c^\nu_{\lambda \mu}$ is precisely the number of lattice points in $\HIVE^\nu_{\lambda \mu}$, and the \emph{saturation conjecture} established in \cite{KT1} asserts that this number of lattice points is non-zero if and only if $\HIVE^\nu_{\lambda \mu}$ is non-empty.  These coefficients arise in many contexts, including the tensor product multiplicities of $\GL_n$ representations and the intersection numbers for Schubert calculus. We refer to \cite{KT2} for further discussion, and \cite{Nar, Okounkov, Greta} for some exploration of the relation between the volume of the hive polytope and the number of its lattice points.  However, we will not study these coefficients further here.
\end{remark}

There is a natural probability measure on the Horn polytope $\HORN_{\la \boxplus \mu}$, referred to as the \emph{Horn probability measure} in \cite{Zuberhorn}, defined as the eigenvalues of $A+B$ when $A, B$ are chosen independently and uniformly (i.e., with respect to $U(n)$-invariant Haar measure) the space (essentially a coadjoint orbit) of all Hermitian matrices with eigenvalues $\la, \mu$ respectively.  This Horn measure turns out to be piecewise polynomial and was computed explicitly in \cite[(8), Proposition 4]{Zuber} (see also \cite{Zuberhorn}) to be given by the formula
\begin{equation}\label{horn-prob}
\frac{V(\nu) V(\tau)}{V(\la) V(\mu)} |\HIVE_{\la \boxplus \mu \rel \nu}| \ d\nu
\end{equation}
for $\la, \mu \in \Spec^\circ$, where $|\HIVE_{\lambda \boxplus \mu \rel \nu}|$ denotes the $\binom{n-1}{2}$-dimensional Lebesgue measure of the hive polytope $\HIVE_{\lambda \boxplus \mu \rel \nu}$, $d\nu = d\nu_1 \dots d\nu_{n-1}$ is $n-1$-dimensional Lebesgue measure on the hyperplane given by \eqref{trace}, 
$$ V(\la) = V_n(\la) \coloneqq \prod_{1 \leq i < j \leq n} (\la_i - \la_j)$$
is the Vandermonde determinant, and $\tau$ is the tuple
$$ \tau \coloneqq ( n, n-1, \dots, 1 ).$$

The factor of $V(\nu)$ in \eqref{horn-prob} is inconvenient, but can be removed through the device of \emph{Gelfand--Tsetlin patterns}, which can be viewed as a limiting (and much better understood) case of a hive.  Analogously to \eqref{rel}, we introduce the relation
$$ \diag(\la) \rel a$$
for $\la \in \Spec$ and $a \in \R^n$ to denote the claim that there exists a Hermitian matrix $A$ with eigenvalues $\la$ and diagonal entries $a_1,\dots,a_n$.  The classical \emph{Schur--Horn theorem} \cite{schur}, \cite{horn} asserts that the relation $\diag(\la) \rel a$ holds if and only if \emph{majorized} by $\lambda$ in the sense that one has the trace condition
\begin{equation}\label{al-trace}
\sum a = \sum \lambda
\end{equation}
and the majorizing inequalities
\begin{equation}\label{major}
 a_{i_1} + \dots + a_{i_k} \leq \lambda_1 + \dots + \lambda_k
\end{equation}
for all $1 \leq i_1 < \dots < i_k \leq n$; equivalently, $a$ lies in the \emph{permutahedron} formed by the convex hull of the image of $\lambda$ under the permutation group $S_n$.

Now define a Gelfand--Tsetlin pattern to be a pattern $\gamma = (\lambda_{j,k})_{1 \leq j \leq k \leq n}$ of real numbers obeying the interlacing conditions
\begin{equation}\label{interlacing}
 \lambda_{j,k+1} \geq \lambda_{j,k} \geq \lambda_{j+1,k+1}
\end{equation}
for $1 \leq j \leq k \leq n-1$; see Figure \ref{fig:gt}.  We say that this pattern has boundary condition $\diag(\lambda) \rel a$ for some $\lambda \in \Spec_n$ and $a \in \R^n$ if one has
$$ \lambda_{j,n} = \lambda_j$$
for $1 \leq j \leq n$ and
$$ \sum_{j=1}^k \lambda_{j,k} = \sum_{j=1}^k a_j$$
for $1 \leq k \leq n$; see Figure \ref{fig:gt-example} for an example.  The polytope of all Gelfand--Tsetlin patterns with boundary condition $\diag(\lambda) \rel a$ will be denoted $\GT_{\diag(\lambda) \rel a}$, and we adopt the same wildcard convention as before, thus for instance
$$\GT_{\diag(\lambda) \rel \ast} \coloneqq \bigcup_a \GT_{\diag(\lambda) \rel a}$$
and
$$\GT_{\diag(\ast) \rel \ast} \coloneqq \bigcup_\lambda \GT_{\diag(\lambda) \rel \ast}$$
denote the sets of Gelfand--Tsetlin patterns with boundary conditions $\diag(\lambda) \rel \ast$ and $\diag(\ast) \rel \ast$ respectively.  We remark that for $\lambda \in \Spec^\circ_n$, $\GT_{\diag(\lambda) \rel \ast}$ is a $\binom{n}{2}$-dimensional polytope, which we call a \emph{Gelfand--Tsetlin polytope}, while $\GT_{\diag(\ast) \rel \ast}$ is a $\binom{n+1}{2}$-dimensional convex cone, which we call the \emph{Gelfand--Tsetlin cone}.

\begin{figure}
\begin{center}
\includegraphics[scale=0.50]{./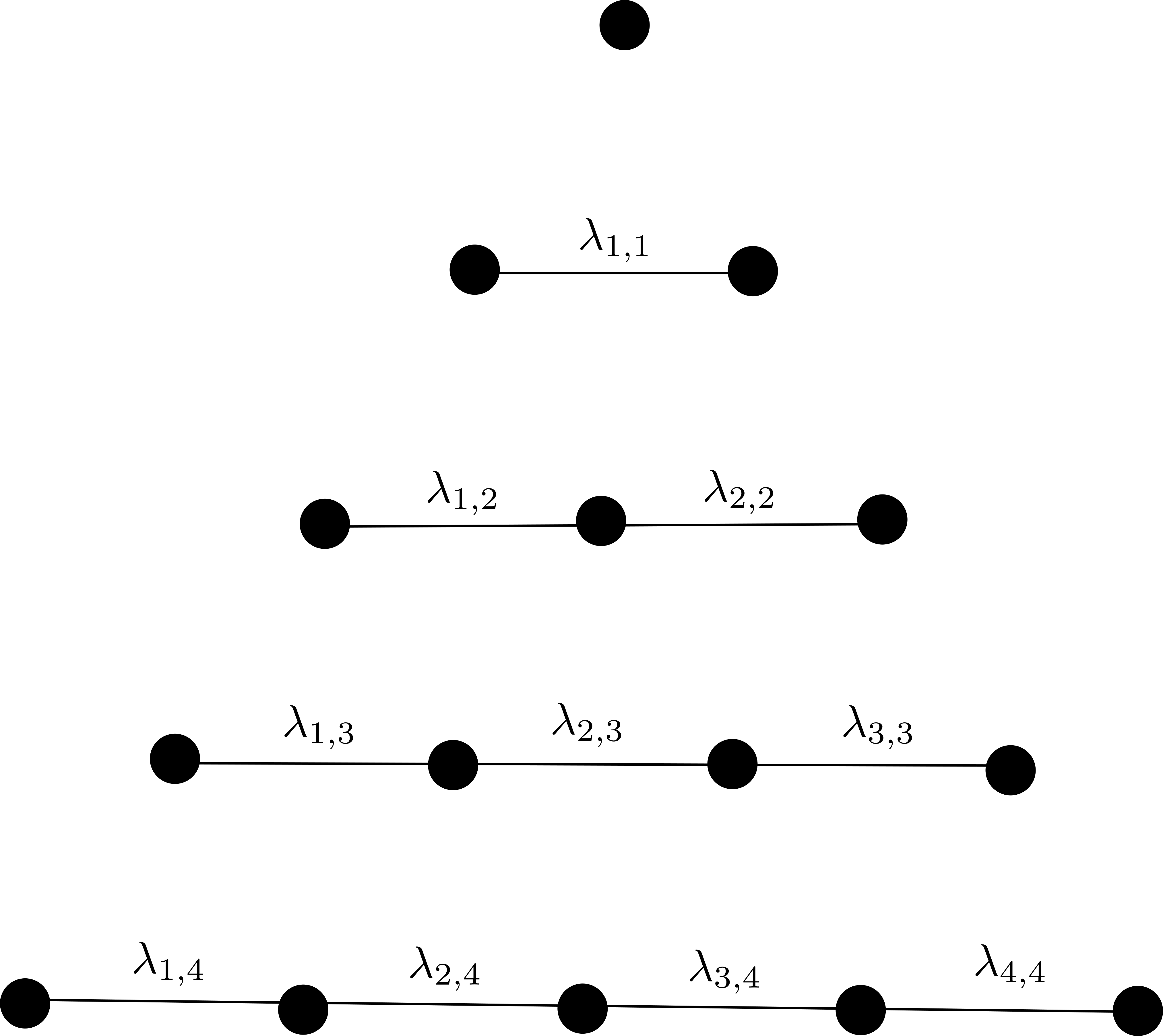}
\caption{An $n=4$ Gelfand--Tsetlin pattern.  Each number $\lambda_{i,j}$ in the pattern is greater than or equal to numbers immediately to the northeast or southeast of the pattern; in particular, every row of the pattern is decreasing. Note that such patterns are sometimes depicted as inverted pyramids instead of pyramids in the literature.}\label{fig:gt}
\end{center}
\end{figure}

\begin{figure}
\begin{center}
\includegraphics[scale=0.50]{./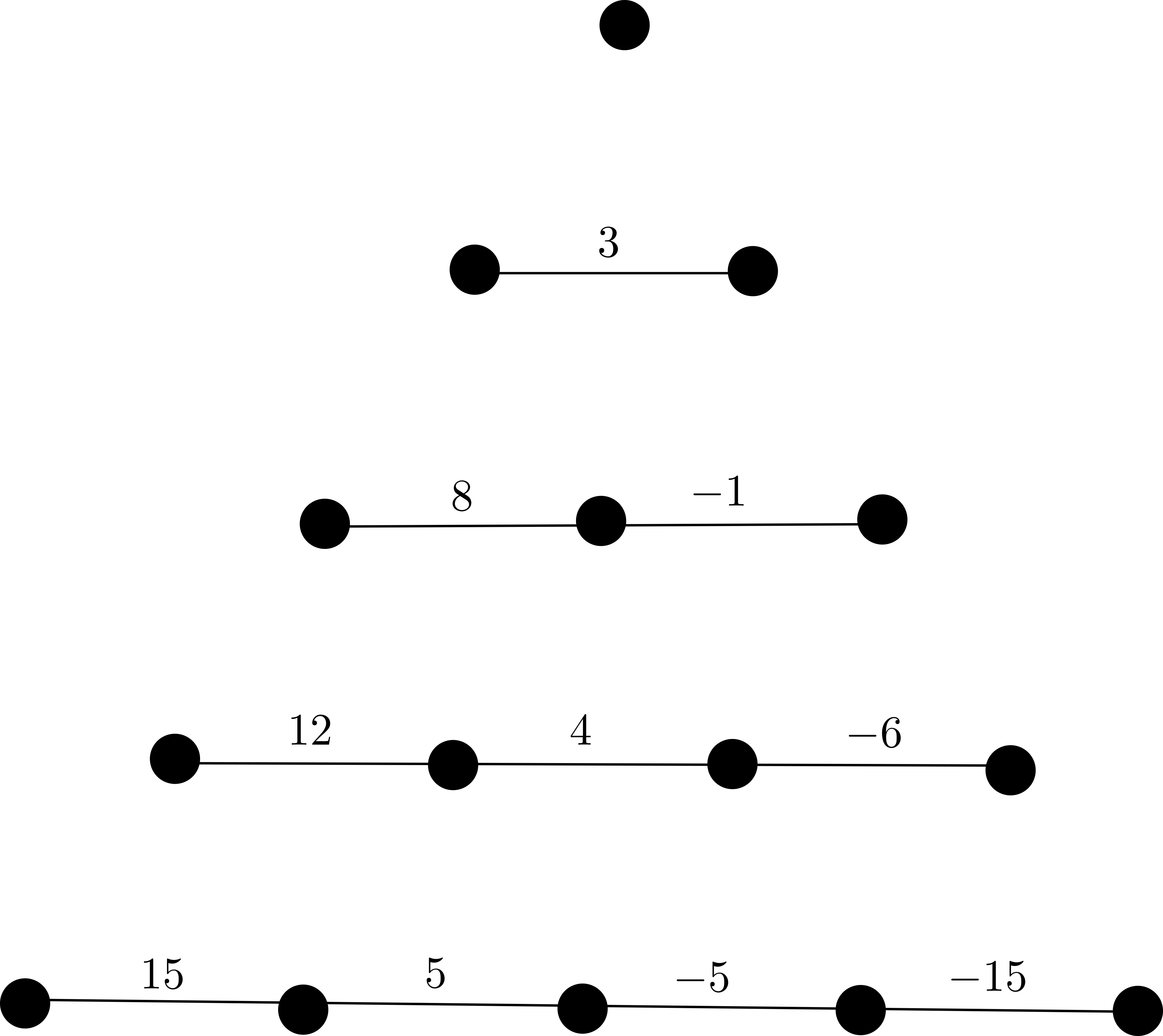}
\caption{A Gelfand--Tsetlin pattern with boundary $\diag(15,5,-5,-15) \rel (3,4,3,-10)$.}\label{fig:gt-example}
\end{center}
\end{figure}

We recall the following standard facts about Gelfand--Tsetlin polytopes:

\begin{proposition}\label{gt-rem}  Let $\lambda \in \Spec^\circ$.
\begin{itemize}
\item[(i)] If $a \in \R^n$, then $\diag(\lambda) \rel a$ holds if and only if $\GT_{\diag(\lambda) \rel a}$ is non-empty.
\item[(ii)]  The $\binom{n}{2}$-dimensional volume of $\GT_{\diag(\lambda) \rel \ast}$ is $V(\lambda) / V(\tau)$.
\item[(iii)]  Let $A$ be a random Hermitian matrix with eigenvalues $\lambda$, drawn using the $U(n)$-invariant Haar probability measure.  For $1 \leq k \leq n$, let $\lambda_{1,k} \geq \dots \geq \lambda_{k,k}$ be eigenvalues of the top left $k \times k$ minor of $A$.  Then $(\lambda_{j,k})_{1 \leq j \leq k \leq n}$ lies in the polytope $\GT_{\diag(\lambda) \rel \ast}$ with the uniform probability distribution; it has boundary data $\diag(\lambda) \rel a$ where $a = (a_{11},\dots,a_{kk})$ are the diagonal entries of $A$.
\item[(iv)]  If $\Lambda \in \Spec$ has \emph{large gaps} in the sense that 
\begin{equation}\label{large-gaps}
\min_{1 \leq i < n} \Lambda_i - \Lambda_{i+1} > \lambda_1 - \lambda_n,
\end{equation}
then there is a volume-preserving linear bijection between $\GT_{\diag(\lambda) \rel a}$ and $\HIVE_{\Lambda \boxplus \lambda \rel \Lambda+a}$ for any $a \in \R^n$, with a Gelfand--Tsetlin pattern $(\lambda_{j,k})_{1 \leq j \leq k \leq n}$ being mapped to the hive $h \colon T \to \R$ defined by the formula
\begin{equation}\label{hij}
 h(i,j) = \Lambda_1 + \dots + \Lambda_j + \lambda_{1,j} + \dots + \lambda_{i,j};
\end{equation}
see Figure \ref{fig:gt-hive}.
\end{itemize}
\end{proposition}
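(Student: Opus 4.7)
The proposition combines four classical facts about Gelfand--Tsetlin patterns, and my plan is to treat them in order, relying on Schur--Horn and Cauchy interlacing.

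For (i), the forward direction is Cauchy interlacing applied to the nested sequence of top-left minors of $A$; the reverse direction is a standard inverse spectral construction, proved by induction on the row, extending a $k \times k$ Hermitian matrix with spectrum $\lambda_{\cdot, k}$ to a $(k+1) \times (k+1)$ matrix with spectrum $\lambda_{\cdot, k+1}$ via an arrow-matrix or bordered matrix construction whenever \eqref{interlacing} holds.

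For (ii), I would induct on $n$, fibering $\GT_{\diag(\lambda) \rel \ast}$ over the penultimate row $\lambda_{\cdot, n-1}$ (which ranges over the box defined by interlacing with $\lambda$) and integrating the fiber volumes supplied by the inductive hypothesis; the resulting iterated integral can be reduced via Vandermonde manipulations to $V(\lambda)/V(\tau)$. For (iii), I would again induct on $n$: conditional on the spectrum $\lambda_{\cdot, n-1}$ of the top-left $(n-1) \times (n-1)$ minor of a Haar-random $A$ with spectrum $\lambda$, the remaining pattern $(\lambda_{j,k})_{1 \leq j \leq k \leq n-1}$ is distributed as the GT pattern of a Haar-random $(n-1) \times (n-1)$ Hermitian matrix with spectrum $\lambda_{\cdot, n-1}$, to which the inductive hypothesis applies; the density of $\lambda_{\cdot, n-1}$ itself is computed via Weyl's integration formula on the $U(n)/U(n-1)$ fibration and (using (ii)) produces the Lebesgue measure on the interlacing box. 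Equivalently, one can view the minor-spectrum map as the moment map for the diagonal torus action on the coadjoint orbit and invoke Duistermaat--Heckman, but the inductive route avoids symplectic overhead.

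The main obstacle is part (iv), which requires direct computation. The boundary conditions \eqref{boundary-1}--\eqref{boundary-3} for $h$ defined by \eqref{hij} follow by substitution. Rhombus-concavity splits into three cases. For a type (i) rhombus one finds $h(A) + h(C) - h(B) - h(D) = \lambda_{i+2, j+1} - \lambda_{i+1, j} \leq 0$ by \eqref{interlacing}, and for type (iii) the difference evaluates similarly to $\lambda_{i+1, j-1} - \lambda_{i+1, j} \leq 0$. The delicate case is type (ii), where the difference equals
\[
-(\Lambda_{j+1} - \Lambda_{j+2}) + \sum_{k=1}^{i}(\lambda_{k,j} - \lambda_{k, j+1}) + \sum_{k=1}^{i+1}(\lambda_{k, j+2} - \lambda_{k, j+1}).
\]
The middle sum is non-positive by interlacing, and the estimate $\lambda_{k, j+2} - \lambda_{k, j+1} \leq \lambda_{k, j+2} - \lambda_{k+1, j+2}$ (again from \eqref{interlacing}) lets the last sum telescope to at most $\lambda_{1, j+2} - \lambda_{i+2, j+2} \leq \lambda_1 - \lambda_n$. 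Combined with the large-gap hypothesis \eqref{large-gaps}, the whole expression is negative, proving concavity. The inverse map $\lambda_{i, j} = h(i, j) - h(i-1, j)$ (for $1 \leq i \leq j$) is linear with unit-determinant triangular Jacobian, so the bijection is volume-preserving.
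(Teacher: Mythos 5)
Your argument is correct, and it diverges from the paper's in a way worth noting. For part (i), the paper's proof is combinatorial: it observes that a Gelfand--Tsetlin pattern forces the majorization inequalities, that permutations of $\lambda$ admit explicit patterns obtained by successively deleting entries, and then invokes the Schur--Horn theorem together with convexity of the set of admissible $a$ to cover the whole permutahedron. You instead go directly through the matrix side: Cauchy interlacing of nested principal minors gives one direction, and the inverse direction comes from an inductive bordered-matrix construction extending a $k\times k$ Hermitian matrix with prescribed spectrum $\lambda_{\cdot,k}$ to a $(k+1)\times(k+1)$ matrix with prescribed spectrum $\lambda_{\cdot,k+1}$ (the new diagonal entry is then forced by the trace, so it automatically equals $a_{k+1}$). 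Both routes are valid; the paper's sidesteps the inverse Cauchy interlacing theorem by leaning on Schur--Horn, while yours is self-contained at the matrix level and makes the mechanism behind (iii) transparent. For (ii) and (iii) the paper simply cites known references (Postnikov, Liu--M\'esz\'aros--St.\ Dizier, Baryshnikov); your inductive sketches are a reasonable, if more laborious, substitute. For (iv), the paper leaves the verification as a "direct calculation," and your computation is precisely that calculation carried out correctly: the type (i) and (iii) rhombi reduce to the interlacing inequalities, while the type (ii) case produces the second difference $\Lambda_{j+2}-\Lambda_{j+1}$ plus a telescoping error bounded by $\lambda_1-\lambda_n$, which the large-gaps hypothesis \eqref{large-gaps} dominates. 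One small point you gloss over: to conclude the map is a \emph{bijection} (not merely a volume-preserving injection) you should remark that the inverse map $\lambda_{i,j}=h(i,j)-h(i-1,j)$ takes hives to GT patterns, i.e.\ that the interlacing inequalities \eqref{interlacing} follow from rhombus-concavity of types (i) and (iii) applied to $h$; this is immediate from the same identities you computed, but it deserves a sentence.
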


\begin{proof} For (i), observe on the one hand that if a Gelfand--Tsetlin pattern has boundary $\diag(\lambda) \rel a$, then the majorization conditions \eqref{al-trace}, \eqref{major} hold, and conversely if $a$ is a permutation of $\lambda$ then one can form a Gelfand--Tsetlin pattern with boundary $\diag(\lambda) \rel a$ by using the permutation defining $a$ to successively delete elements of $\lambda_1,\dots,\lambda_n$.  The claim now follows from the Schur--Horn theorem and convexity.

For (ii), see for example, \cite[Theorem 15.1]{Postnikov}, \cite[Theorem 1.1]{Karola}, or \cite[Lemma 1.12]{GUEqueue}.  For (iii), see \cite[\S 3.9]{GUEqueue}.  For (iv), direct calculation shows that any Gelfand--Tsetlin pattern $(\lambda_{j,k})_{1 \leq j \leq k \leq n}$ with boundary condition $\diag(\lambda) \rel a$ generates a hive with boundary condition $\Sigma \boxplus \lambda \rel \Sigma+a$ by the formula \eqref{hij}, and conversely that every hive with boundary condition $\Sigma \boxplus \lambda \rel \Sigma+a$ arises from a unique Gelfand--Tsetlin pattern in this manner.  The linear transformation \eqref{hij} is unipotent in a suitable basis, hence volume-preserving. (See also \cite[\S 4.3]{Zuber-2} for a closely related calculation.)
\end{proof}

\begin{remark}  Proposition \ref{gt-rem}(iv) shows that the relation $\diag(\lambda) \rel a$ is a limiting case of the relation
$\Sigma \boxplus \lambda \rel \Sigma+a$ when $\Sigma$ has sufficiently large gaps.  This is related to the observation that if $A$ is a Hermitian matrix and $S$ is a diagonal matrix whose diagonal entries $\Sigma$ have large gaps, then the eigenvalues of $S+A$ are close to $\Sigma + \diag(A)$.
\end{remark}

\begin{figure}
\begin{center}
\includegraphics[scale=0.50]{./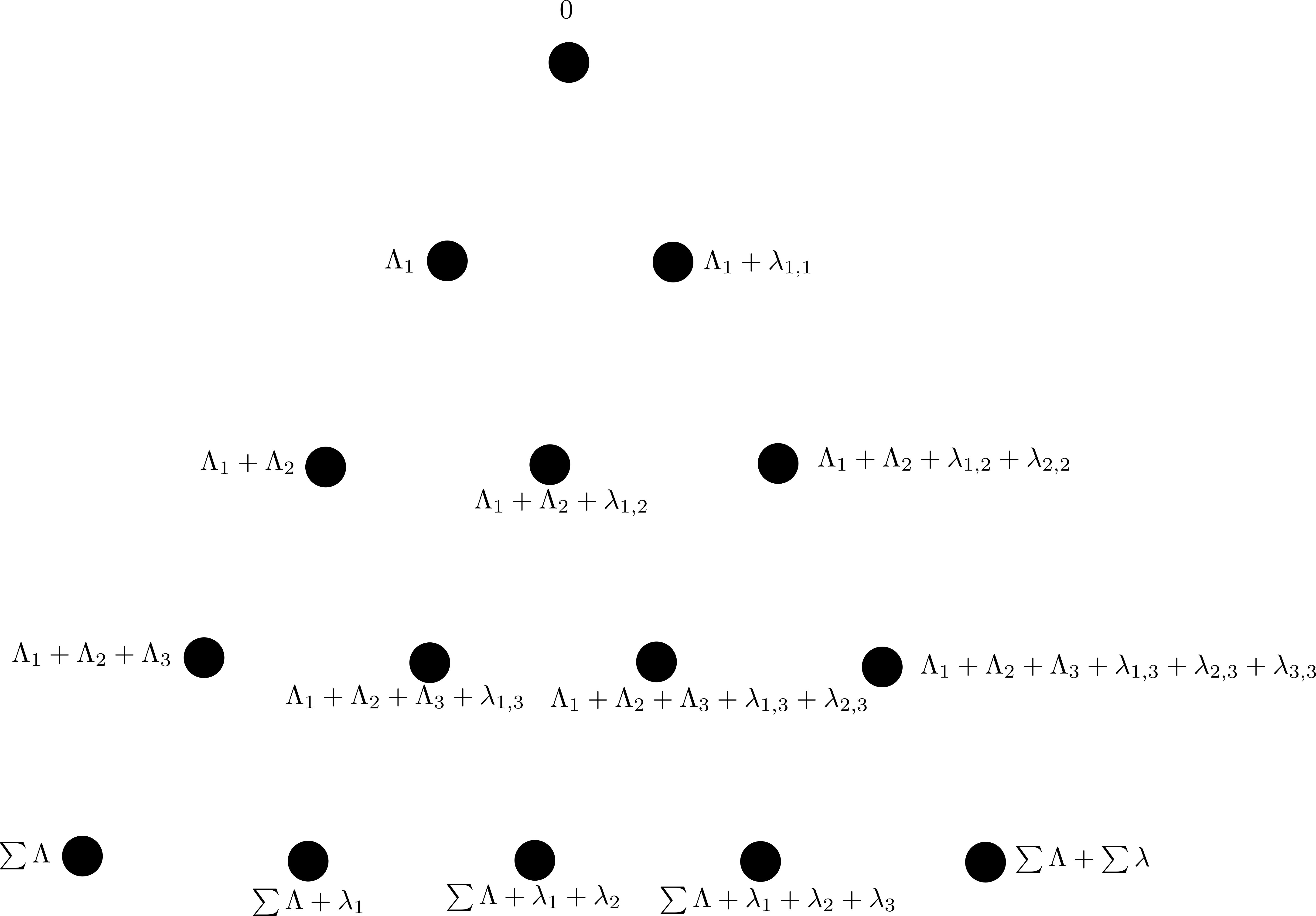}
\caption{The hive associated with the Gelfand--Tsetlin pattern in Figure \ref{fig:gt-example} and some large gap tuple $\Lambda$.  Note that the orientation of this hive is flipped and rotated from that in Figure \ref{fig:hive-example}.}\label{fig:gt-hive}
\end{center}
\end{figure}

\begin{figure}
\begin{center}
\includegraphics[scale=0.50]{./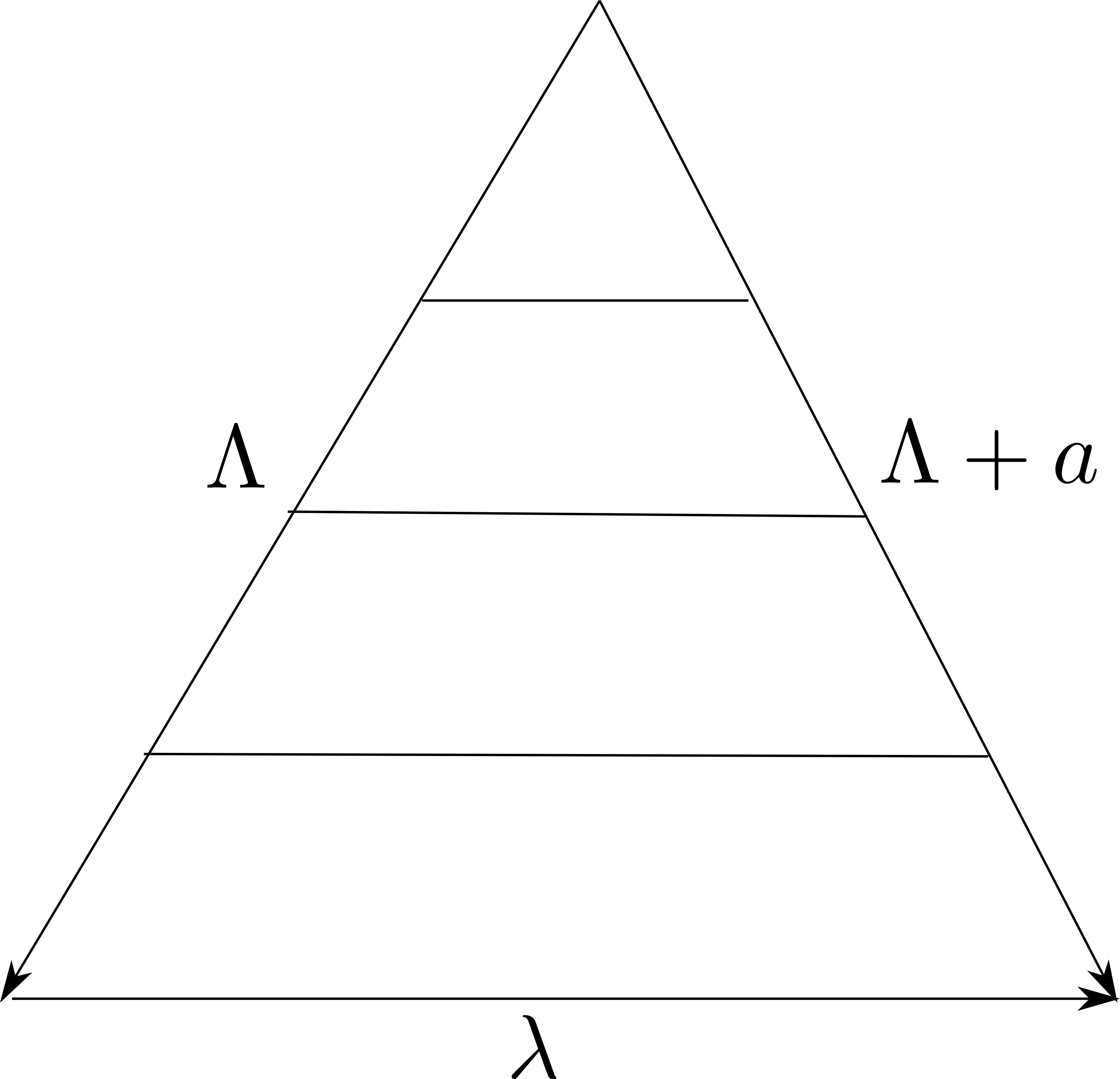}
\caption{A schematic depiction of the boundary conditions of the hive in Figure \ref{fig:gt-hive}.  The horizontal ``creases'' inside the triangle indicate that the rhombus concavity condition \eqref{rhombus-concave} is essentially an automatic consequence of the large gaps hypothesis for rhombi that cross these creases.}\label{fig:gt-hive-schematic}
\end{center}
\end{figure}

From this proposition, \eqref{horn-prob}, and the Fubini--Tonelli theorem, one can view the Horn probability measure associated to two tuples $\lambda,\mu \in \Spec^\circ$ to be the pushforward of $\frac{V(\tau)^2}{V(\la) V(\mu)}$ times Lebesgue measure on the ($(n-1)^2$-dimensional) \emph{augmented hive polytope}
$$ \AHIVE_{\diag(\lambda \boxplus \mu \rel \ast) \rel \ast} \coloneqq \bigcup_{\nu,a} \AHIVE_{\diag(\lambda \boxplus \mu \rel \nu) \rel a} \coloneqq \bigcup_{\nu,a} (\HIVE_{\lambda \boxplus \mu \rel \nu} \times \GT_{\diag(\nu) \rel a})$$
under the linear map that sends $\AHIVE_{\diag(\lambda\boxplus \mu \rel\nu) \rel a}$ to $\nu$ for each $\nu,a$.  We refer to the elements $(h, \gamma)$ of the augmented hive polytope $\AHIVE_{\diag(\lambda \boxplus \mu \rel \ast) \rel \ast}$, as \emph{augmented hives}. 

Using Proposition \ref{gt-rem}(iv), one can view an augmented hive as two hives glued together along a common edge, where two of the boundaries have large gaps: see Figure \ref{fig:augment}.

\begin{figure}
\begin{center}
\includegraphics[scale=0.40]{./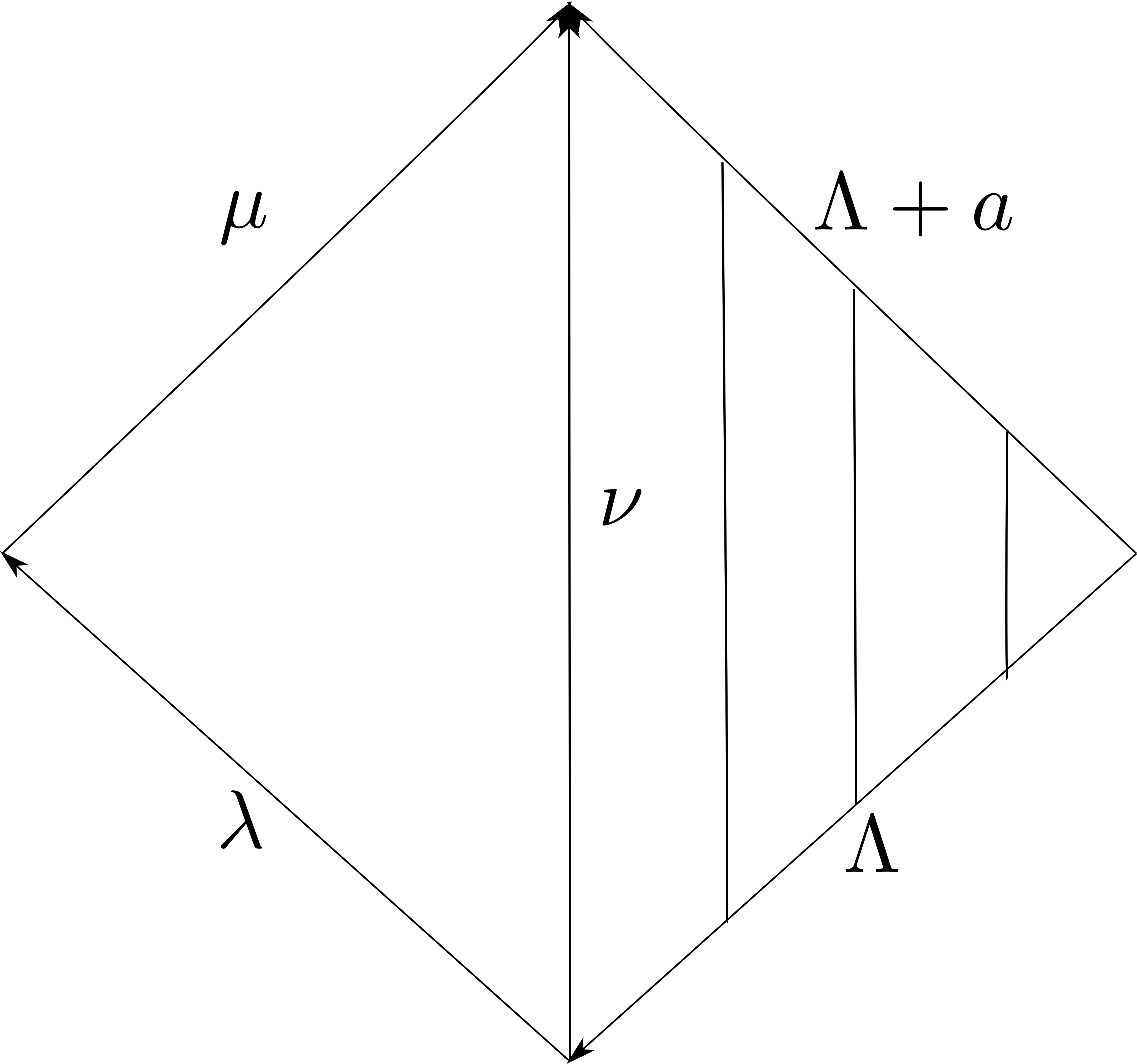}
\caption{A schematic depiction of an augmented hive in $\AHIVE_{\diag(\lambda \boxplus \mu \rel \nu) \rel a}$, where we artificially shift by a tuple $\Lambda$ with large gaps in order to create two hives, instead of a hive and a Gelfand--Tsetlin pattern.}\label{fig:augment}
\end{center}
\end{figure}

As just one illustration of the power of this characterization, we observe as an immediate corollary using Pr\'ekopa's theorem \cite{prekopa} (or the Pr\'ekopa--Leindler inequality, or the Brunn--Minkowski inequality) that the Horn probability measure is log-concave for any $\lambda,\mu \in \Spec^\circ$.

To summarize the discussion so far: the eigenvalues of the sum of two independent, uniformly distributed Hermitian matrices with eigenvalues $\lambda,\mu \in \Spec^\circ$ respectively, has the distribution of a linear projection of the uniform probability measure on the augmented hive polytope $\AHIVE_{\diag(\lambda \boxplus \mu \rel \ast) \rel \ast}$, which can be computed to be an $n(n-1)$-dimensional polytope.

\subsection{The large $n$ limit and GUE}

In principle, the behavior of the Horn probability measures in the limit $n \to \infty$ is governed by the theory of free probability: if the empirical distributions of $\la = \lambda^{(n)}, \mu = \mu^{(n)}$ converge (in an appropriate sense) to probability measures $\sigma, \sigma'$, then the empirical distribution of $\nu$ (drawn from the Horn probability measure) should similarly converge to the free convolution $\sigma  \boxplus \sigma'$; see for instance the seminal paper \cite{Voi} for some rigorous results in this direction.  Relating to this, results have emerged in recent years establishing large deviation inequalities for the Horn probability measure under suitable hypotheses on $\lambda, \mu$: see \cite{bgh}, \cite{NarSheff}.

The question of understanding the spectrum of $X_n + Y_n,$ in the setting of large deviations was studied  in \cite{bgh}, where upper and lower large deviation bounds were given which agreed for measures of a certain class that correspond to ``free products with  amalgamation" (see Theorem 1.3, \cite{bgh}). 

Suppose $\lambda^{cont}:[0,1]\rightarrow \R$ and $\mu^{cont}:[0,1]\rightarrow \R$ are $C^1$, strongly decreasing functions. Let $\lambda^{(n)}$ and $\mu^{(n)}$ be obtained from it by taking the slopes of the respective piecewise linear approximations to $\lambda^{cont}$ and $\mu^{cont}$, where the number of pieces is $n$.   In \cite[Theorem 8]{NarSheff}, a large deviation principle was obtained for the probability measure (as $n \ra \infty$) of the piecewise linear extension of $\frac{h_n}{n^2}$ to $T$, 
where $(h_n, \gamma_n)$ is an augmented hive sampled uniformly at random from $\AHIVE_{\diag(\lambda^{(n)} \boxplus \mu^{(n)} \rel \ast) \rel \ast}$.

Comparing these results with the previous discussion, it is natural to ask if Lebesgue measure on the polytope $\AHIVE_{\diag(\lambda \boxplus \mu \rel \ast) \rel \ast}$ also exhibits concentration.  As a first step towards this goal, we are able to establish this for spectra $\lambda,\mu$ that are not deterministic, but are instead drawn from (scalar multiples of) the \emph{GUE ensemble}.  To establish normalization conventions, we define a GUE random matrix to be a random Hermitian matrix $M = (\xi_{ij})_{1 \leq i,j \leq n}$ where $\xi_{ij} = \overline{\xi_{ji}}$ for $i<j$ are independent complex gaussians of mean zero and variance $1$, $\xi_{ii}$ are independent real gaussians of mean zero and variance $1$, independent of the $\xi_{ij}$ for $i<j$.  As is well known (see e.g., \cite{mehta}), if $\sigma>0$ and $A$ is a random matrix with $\frac{A}{\sqrt{\sigma^2 n}}$ drawn from the GUE ensemble, then the eigenvalues $\lambda \in \Spec$ of $A$ are distributed with probability density function
\begin{equation}\label{density}
 C_n \sigma^{-\frac{n(n+1)}{2}} \exp\left( - \frac{|\lambda|^2}{2\sigma^2 n} \right) V(\lambda)^2
\end{equation}
for some constant $C_n>0$ depending only on $n$.  In particular, $\lambda$ will lie in $\Spec^\circ$ almost surely. From this the previous discussion, we see that if $\sigma_\lambda, \sigma_\mu > 0$ are fixed\footnote{In particular, we allow implied constants in the $O()$ notation to depend on these quantities.} and $A, B$ are independent random matrices with\footnote{This normalization is chosen so that the mean eigenvalue gaps of $A,B$ comparable to $1$ in the bulk of the spectrum.} $\frac{A}{\sqrt{\sigma_\lambda^2 n}}, \frac{B}{\sqrt{\sigma_\mu^2 n}}$ drawn from the GUE ensemble, then the the distribution of the eigenvalues of $A+B$ are the pushforward of the measure 
on the $n(n+1)$-dimensional \emph{augmented hive cone} 
$$\AHIVE_{\diag(\ast \boxplus \ast \rel \ast) \rel \ast} \coloneqq \bigcup_{\lambda,\mu,\nu,\pi} (\HIVE_{\lambda \boxplus \mu \rel \nu} \times \GT_{\diag(\nu) \rel \pi}),$$ 
where the probability density function of this measure is given by
\begin{equation}\label{css}
 C_{n,\sigma_\lambda,\sigma_\mu} \exp\left( - \frac{|\lambda|^2}{2\sigma_\lambda^2 n} - \frac{|\mu|^2}{2\sigma_\mu^2 n} \right) V(\lambda) V(\mu)
\end{equation}
on the slices
\begin{equation}\label{ahive-slice}
\AHIVE_{\diag(\lambda \boxplus \mu \rel \ast) \rel \ast} \coloneqq \bigcup_{\nu,\pi} (\HIVE_{\lambda \boxplus \mu \rel \nu} \times \GT_{\diag(\nu) \rel \pi})
\end{equation}
and where $C_{n,\sigma_\lambda,\sigma_\mu} > 0$ is chosen to make this measure a probability measure.

Since GUE matrices have an operator norm of $O(\sqrt{n})$ with overwhelming probability (by which we mean with probability $1-O(n^{-C})$ for any fixed $C>0$), the boundary differences $\lambda, \mu, \nu$ of an augmented hive $(h,\gamma)$ drawn from the above measure will be of size $O(n)$ with overwhelming probability, and hence the entries $h(v), v \in T$ of the hive will be of size $O(n^2)$ with overwhelming probability.  From this fact (and some crude moment estimates to treat the contribution of the exceptional event), it is not difficult to show the ``trivial bound'' that the variance $\Var h(v)$ of any individual entry $h(v)$ of the hive is bounded by $O(n^4)$.  Of course, here and in the sequel we use the asymptotic notation $X \ll Y$, $Y \gg X$, or $X = O(Y)$ to denote the estimate $|X| \leq CY$ for some constant $C$ (depending only on $\sigma_\lambda, \sigma_\mu$); we also use $X \asymp Y$ for $X \ll Y \ll X$, and $X = o(Y)$ to denote the estimate $|X| \leq c(n) Y$ for some $c(n)$ that goes to zero as $n$ goes to infinity (keeping $\sigma_\lambda, \sigma_\mu$ fixed).

The main result of this paper is a gain over this trivial bound:

\begin{theorem}[Concentration of augmented hives]\label{mainthm}  Let $\sigma_\lambda,\sigma_\mu > 0$ be fixed, and let $(h,\gamma) \in \AHIVE_{\diag(\ast \boxplus \ast \rel \ast) \rel \ast}$ be a random augmented hive drawn using the probability measure \eqref{css}.  Then for all $v \in T$, we have the variance bound
$$ \Var h(v) = o( n^4 )$$
as $n \to \infty$, uniformly in $v$.
\end{theorem}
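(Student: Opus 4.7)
The plan is to use the law of total variance, conditioning on the boundary eigenvalue data $(\lambda,\mu)$:
\begin{equation*}
\Var h(v) = \E\bigl[\Var(h(v) \mid \lambda,\mu)\bigr] + \Var\bigl(\E[h(v) \mid \lambda,\mu]\bigr),
\end{equation*}
and then to attack the two terms separately, using (for the first term) Speyer's representation of augmented hives together with KLS-type concentration, and (for the second term) determinantal structure of the GUE minor process together with the Cipolloni--Erd\H{o}s--Schr\"oder decay estimates on eigenvalue gap covariances.

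For the first term I would invoke Speyer's representation, which should parametrise the augmented hive polytope $\AHIVE_{\diag(\lambda\boxplus\mu \rel \ast) \rel \ast}$ by a product of Gelfand--Tsetlin polytopes and realise each hive value $h(v)$ as the supremum of a finite family of affine functionals on this parameter space. Conditional on $(\lambda,\mu)$ the measure on the parameter space is uniform on a product polytope, hence log-concave, of dimension $O(n^2)$, with entries typically of size $O(n)$. Klartag's near-optimal bound on the KLS conjecture then gives, for each affine piece in Speyer's supremum, Poincar\'e-type concentration at the scale $o(n^2)$ in standard deviation; a union bound (or an appeal to the convexity of $h(v)$ as a supremum of affine functionals) over the pieces preserves this gain up to polylogarithmic factors. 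This yields $\Var(h(v)\mid\lambda,\mu) = o(n^4)$ uniformly on an overwhelming-probability set in $(\lambda,\mu)$, and the complement is absorbed using the crude deterministic bound $|h(v)| = O(n^2)$ together with Gaussian tail estimates on the GUE spectral edge.

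For the second term, using Proposition~\ref{gt-rem}(iii) the GT patterns inside Speyer's parametrisation can be identified with the minor process of an ambient GUE matrix with spectrum $\lambda$ (and similarly for $\mu$). This minor process is determinantal, so $\E[h(v)\mid\lambda,\mu]$ admits (up to lower-order error) an expression as a bulk linear statistic in the eigenvalues and consecutive eigenvalue gaps of the ambient GUE matrices. The Cipolloni--Erd\H{o}s--Schr\"oder covariance decay for gaps of GUE then produces the required $o(n^4)$ variance bound for this term by exploiting cancellations among contributions from different bulk locations, improving on the naive sum of individual variances.

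The step I expect to be the main obstacle is the application of KLS to Speyer's supremum. The $o(n^4)$ target leaves essentially no polynomial margin above the trivial bound, so one must verify that Speyer's parametrisation yields affine pieces with controlled Lipschitz constants and not too many active pieces, so that the polylogarithmic losses from KLS and from the union over pieces fit inside the $o(1)$ budget. A secondary point is that the rescaling must be calibrated so that the product GT polytope is genuinely close to isotropic at the relevant scale; a mismatch between the $O(n)$ size of individual GT entries and the $O(n^2)$ size of their partial sums that define $h(v)$ could otherwise inflate the variance bound back to $\Theta(n^4)$.
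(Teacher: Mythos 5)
You take a genuinely different route from the paper, and it runs into a real obstacle in the second term of your decomposition.

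The paper does not condition on $(\lambda,\mu)$ at the outset. Instead it first observes that the density \eqref{css} is log-concave on the \emph{entire} $n(n+1)$-dimensional product cone $\GT_{\diag(\ast)\rel\ast}\times\GT_{\diag(\ast)\rel\ast}$, i.e.\ jointly in the GT patterns \emph{and} in $(\lambda,\mu)$. Lemma~\ref{lem:17} is then applied once, to this full log-concave measure, yielding $\Var(\sup_{w\in\mathcal{W}_v} w) \ll \sup_w \Var(w)\,\log(2+d)$. This step eliminates the supremum over tilings entirely, reducing the problem to the variance of a \emph{single} affine functional $w(\gamma_1,\gamma_2)$. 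Only after that reduction does the paper invoke the law of total variance, conditioning on the spectrum to split into $\E\Var(w\mid\lambda)$ (handled by the determinantal minor-process formulae of Proposition~\ref{moment-formulae}) and $\Var\E(w\mid\lambda)$ (handled by a weighted Poincar\'e inequality on the law of $\lambda$ together with the Cipolloni--Erd\H{o}s--Schr\"oder gap-covariance decay). Because $w$ is affine, $\E(w\mid\lambda)$ has an explicit and differentiable expression via Proposition~\ref{moment-formulae}, which is what makes the Poincar\'e/CES step tractable.

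Your order of operations is the reverse: condition first, then apply KLS. Your treatment of $\E\bigl[\Var(h(v)\mid\lambda,\mu)\bigr]$ is essentially sound in spirit (apply Lemma~\ref{lem:17} conditionally, since the conditional law on the product of GT polytopes is uniform hence log-concave; one caveat is that the ``union bound'' you float as an option would fail outright, as the number of lozenge tilings is exponential in $n^2$, but your alternative appeal to convexity/Lipschitzness is the correct mechanism and carries no such penalty). The gap is in $\Var\bigl(\E[h(v)\mid\lambda,\mu]\bigr)$. You assert that $\E[h(v)\mid\lambda,\mu] = \E[\sup_w w\mid\lambda,\mu]$ ``admits (up to lower-order error) an expression as a bulk linear statistic,'' but this is not justified: the conditional expectation of a supremum of affine functionals is neither a supremum of the conditional expectations nor a linear statistic, and there is no analogue of Lemma~\ref{lem:17} that lets you commute $\E[\,\cdot\mid\lambda,\mu]$ past the supremum. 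Even for a single affine $w$, the conditional mean $\E[w\mid\lambda]$ is a genuinely nonlinear function of $\lambda$ (in the paper it appears through $\int_{I_j} Q_{j,\lambda}$, see Proposition~\ref{moment-formulae}); with the sup inside, one loses both the explicit formula and the derivative bound \eqref{e-bound} that the paper relies on to feed into its Poincar\'e-plus-CES estimate. This is precisely the term your decomposition cannot close, and it is the reason the paper applies Lemma~\ref{lem:17} on the joint measure \emph{before} conditioning rather than after.
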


Informally, this theorem asserts that randomly selected hives (with GUE boundary data) have an asymptotic limiting profile, at least in a subsequential sense.  It would be of interest to determine the uniqueness of this limiting profile and what this limiting profile is; most likely it will be related to the surface tension function introduced in \cite{random_concave} (see also \cite{NarSheff}).

\begin{remark}  Since the probability density function \eqref{css} of $v$ is log-concave, the scalar random variable $h(v)$ also has a log-concave density thanks to Pr\'ekopa's theorem \cite{prekopa}.  As is well known (see e.g., \cite{ALLOP}), control on the variance of a log-concave function implies subexponential tail bounds.  In particular, from Theorem \ref{mainthm} we can conclude that for every $\eps>0$ one has
$$ \E \exp\left( \frac{|h(v) - \E h(v)|}{\eps n^2} \right) \leq 2$$
(say) for $n$ sufficiently large depending on $\eps$.
\end{remark}

\subsection{Methods of proof}

We now discuss the main steps in the proof of Theorem \ref{mainthm}.  The first step is to exploit the \emph{octahedron recurrence}, which has appeared in the enumerative combinatorics literature several times (dating back at least to \cite{rr}), and which was observed in \cite{KTW} to witness an ``associativity'' property 
\begin{equation}\label{lamu}
\bigcup_\nu \HIVE_{\la \boxplus \mu \rel \nu} \times \HIVE_{\gamma \boxplus \nu \rel \pi}
\equiv \bigcup_\sigma \HIVE_{\gamma \boxplus \la \rel \sigma} \times \HIVE_{\sigma \boxplus \mu \rel \pi}
\end{equation}
on hives related to the trivial associativity 
\begin{equation}\label{abc}
(A+B)+C=A+(B+C)
\end{equation}
of the addition operation on Hermitian matrices $A,B,C$.  Indeed, from Proposition \ref{weyl-prop} and \eqref{abc} we see that the left-hand side of \eqref{lamu} is non-empty if and only if the right-hand side is, and in fact both polytopes have the same volume.  Informally, the relation \eqref{lamu} can be depicted schematically as $\gamma \boxplus (\la \boxplus \mu) \equiv (\gamma \boxplus \la) \boxplus \mu$.

In our context (viewing Gelfand--Tsetlin patterns as degenerations of hives), the octahedron recurrence is a piecewise-linear volume-preserving bijection
$$ \oct \colon \GT_{\diag(\ast) \rel \ast} \times \GT_{\diag(\ast) \rel \ast} \to \AHIVE_{\diag(\ast \boxplus \ast \rel \ast) \rel \ast}$$
between the two $n(n-1)$-dimensional convex cones $\GT_{\diag(\ast) \rel \ast} \times \GT_{\diag(\ast) \rel \ast}$, $\AHIVE_{\diag(\ast \boxplus \ast \rel \ast) \rel \ast}$.  In fact $\oct$ is a piecewise-linear volume-preserving bijection
\begin{equation}\label{oct-map}
 \oct \colon \bigcup_b \GT_{\diag(\la) \rel b} \times \GT_{\diag(\mu) \rel a-b} \to \bigcup_\nu \AHIVE_{\diag(\la \boxplus \mu \rel \nu) \rel a}
\end{equation}
for any $\la, \mu \in \Spec^\circ$ and $a \in \R^d$.  This map is related to the trivial linearity relation
\begin{equation}\label{diag-ident}
 \diag(A+B) = \diag(A) + \diag(B)
\end{equation}
of the operation $\diag$ of extracting the diagonal elements $\diag(A)$ of a Hermitian matrix $A$; this linearity is in some sense a degeneration of the associativity \eqref{abc} in the limit where $C$ is diagonal and has eigenvalue gaps going to infinity.  Indeed, we see from \eqref{diag-ident} and Proposition \ref{gt-rem}(i) that the domain in \eqref{oct-map} is non-empty if and only if the range is. Informally, the relation \eqref{oct-map} can be depicted schematically as $\diag(\la) + \diag(\mu) \equiv \diag(\la \boxplus \mu)$.

\begin{figure}
\begin{center}
\includegraphics[scale=0.40]{./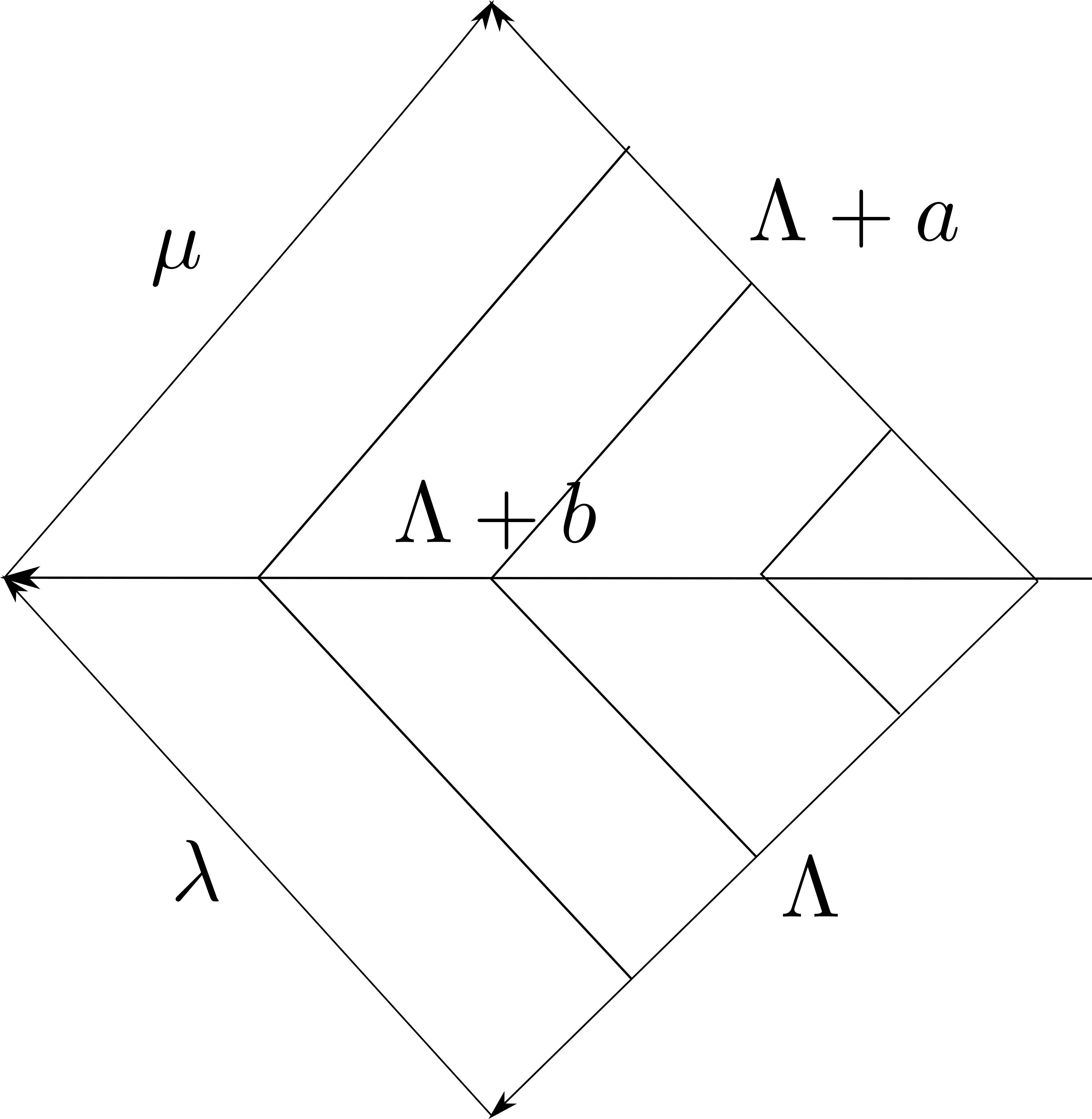}
\caption{A schematic depiction of a pair in $\GT_{\diag(\la) \rel b} \times \GT_{\diag(\mu) \rel a-b}$, where an artificial shift by a tuple $\Lambda$ with large gaps is used to re-interpret this pair as a pair of hives with a common edge.  The octahedron recurrence transforms such a pair of hives to another pair of hives of the form indicated in Figure \ref{fig:augment}.}\label{fig:excavated}
\end{center}
\end{figure}

In the work of Speyer \cite{Speyer}, a useful explicit form of the octahedron recurrence $\oct$ is obtained.  We record one particular consequence of this form here, proven in Section \ref{recurrence-sec}:

\begin{theorem}[Excavation form of octahedron recurrence]\label{excav}  Let $v$ be an element of the triangle $T$.  Then there is an explicit finite family ${\mathcal W}_{v} \colon \GT_{\diag(\ast) \rel \ast} \times \GT_{\diag(\ast) \rel \ast} \to \R$ (defined in Section \ref{recurrence-sec}) of linear functionals on $\GT_{\diag(\ast) \rel \ast} \times \GT_{\diag(\ast) \rel \ast}$, such that whenever $(h,g) = \oct(g_1,g_2)$ is the image of the octahedron recurrence for some $g_1, g_2 \in \GT_{\diag(\ast) \rel \ast}$, then
\begin{equation}\label{hv}
 h(v) = \max_{w \in {\mathcal W}} w( g_1, g_2 ).
\end{equation}
\end{theorem}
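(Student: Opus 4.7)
The plan is to derive the claim directly from Speyer's explicit solution \cite{Speyer} to the octahedron recurrence. Recall that the octahedron recurrence on a three-dimensional lattice admits an exact solution as a partition function: each value at a lattice site can be written as a sum, over a certain family of combinatorial objects (perfect matchings of an Aztec-diamond type graph, equivalently certain families of non-intersecting lattice paths), of monomials in the initial data. This is a standard consequence of the Dodgson/Desnanot--Jacobi identity underlying the recurrence. Tropicalizing — that is, passing from the semiring $(\R_{>0},\cdot,+)$ to the semiring $(\R,+,\max)$ — converts each such partition function into a maximum of sums; the monomials become linear functionals and the summation becomes the maximum.

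Concretely, I would proceed as follows. First, I set up the three-dimensional octahedron recurrence on the appropriate region so that its boundary data on two of the faces of a tetrahedral configuration carries the pair $(g_1,g_2)\in \GT_{\diag(\ast)\rel\ast}\times \GT_{\diag(\ast)\rel\ast}$, and its values on the opposite faces form the augmented hive $(h,g)\in \AHIVE_{\diag(\ast\boxplus\ast\rel\ast)\rel\ast}$; this identifies the recurrence with the map $\oct$ from \eqref{oct-map}. Second, I record Speyer's formula in this coordinatization, obtaining for each internal site an expression as a tropical polynomial in the boundary entries. Third, I specialize to an arbitrary vertex $v\in T$ of the hive face; the tropical polynomial at $v$ is by construction of the form $\max_{w} w(g_1,g_2)$ where the $w$ range over a finite set of linear functionals with integer coefficients determined by Speyer's combinatorial objects. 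This finite set is taken as the definition of ${\mathcal W}_v$.

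The bookkeeping step — identifying exactly which combinatorial objects contribute and computing their coefficients as linear functionals on $\GT_{\diag(\ast)\rel\ast}\times \GT_{\diag(\ast)\rel\ast}$ — is where most of the work lies, and it is what Section \ref{recurrence-sec} must carry out in order to make the family ${\mathcal W}_v$ truly explicit. Once the combinatorial objects are in hand, the formula \eqref{hv} requires essentially no additional argument: the piecewise-linear map $\oct$ is, by construction via the tropicalization, the maximum of these linear functionals, evaluated site by site.

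The main obstacle is twofold: first, matching Speyer's conventions (which are typically phrased for the octahedron recurrence on $\Z^3$ with specified initial data on two consecutive time-slices or on a corner) to the hive/GT boundary data, which requires a careful change of coordinates and some degeneration to handle the Gelfand--Tsetlin factor as a large-gap limit of a hive (using Proposition \ref{gt-rem}(iv)); and second, verifying that the resulting linear functionals are genuinely linear on the full cone $\GT_{\diag(\ast)\rel\ast}\times \GT_{\diag(\ast)\rel\ast}$ rather than merely on pieces — this follows once one checks that each combinatorial object in Speyer's enumeration contributes a globally defined linear term, with the piecewise structure arising only from the $\max$.
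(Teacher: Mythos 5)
Your proposal is correct and takes essentially the same route as the paper: both invoke Speyer's explicit (tropical) solution of the octahedron recurrence as a maximum over a finite set of combinatorial objects, match it to the hive/GT boundary data by a change of coordinates and the large-gap degeneration of Proposition \ref{gt-rem}(iv), and observe that each object contributes a globally linear functional with the piecewise structure coming only from the $\max$. The one thing the paper does that your sketch leaves implicit is to pass from Speyer's perfect matchings of the dual graph to the equivalent, and more workable, description by lozenge-and-border-triangle tilings of an explicit hexagon $\hexagon_v$, with explicit $\pm\tfrac13$ weights; this dual reformulation (carried out in Definition \ref{octa-def} and verified against Speyer in Appendix \ref{octahedron-app}) is precisely the ``bookkeeping step'' you correctly flag as the bulk of the work.
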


The linear functionals are given in terms of lozenge tilings of a certain hexagon $\hexagon_v$ associated to $v$; this version of the Speyer formula has not explicitly been written previously in the literature, and may be of independent interest.

From the volume-preserving nature of the octahedron recurrence and Fubini's theorem, we see that the probability distribution on $\AHIVE_{\diag(\ast \boxplus \ast \rel \ast) \rel \ast}$ with density function \eqref{css} is the pushforward under $\oct$ of the probability distribution on the product cone $\GT_{\diag(\ast) \rel \ast} \times \GT_{\diag(\ast) \rel \ast}$, with the density function given by the same formula \eqref{css} as before on each slice $\GT_{\diag(\lambda) \rel \ast} \times \GT_{\diag(\mu) \rel \ast}$.  By Proposition \ref{gt-rem}, this latter distribution can also be viewed as the distribution of pairs 
\begin{equation}\label{minor}
(\gamma_1,\gamma_2) = \left( (\lambda_{j,k})_{1 \leq j \leq k \leq n}, (\mu_{j,k})_{1 \leq j \leq k \leq n}\right)
\end{equation}
where $\lambda_{j,k}$ (resp. $\mu_{j,k}$) is the $j^{\mathrm{th}}$ eigenvalue of the top left $k \times k$ minor of $A$ (resp. $B$), and $A,B$ are independent Hermitian matrices with $\frac{A}{\sqrt{\sigma_\lambda^2 n}}, \frac{B}{\sqrt{\sigma_\mu^2 n}}$ drawn from the GUE ensemble.

Direct calculation reveals that the density function \eqref{css} is log-concave.  The supremum in Theorem \ref{excav} can then be handled by the following tool, which may be of independent interest:

\begin{lemma}\lab{lem:17}
Let $\eta$ be an log-concave probability measure in $\R^d$ with finite second moments, and let ${\mathcal W}$ be a family of affine functions $w \colon \R^d \to \R$.  Then
$$\Var_\eta \left(\sup_{w \in {\mathcal W}_v} w\right) \ll \sup_{w \in {\mathcal W}} (\Var_\eta w) \log(2+d).$$
Here of course we use the probabilistic notation $\E_\eta w \coloneqq \int_{\R^d} w\ d\eta$ and $\Var_\eta w \coloneqq \E_\eta |w|^2 - |\E_\eta w|^2$.
\end{lemma}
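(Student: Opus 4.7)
The plan is to apply an affine change of variables that reduces the claim to the case where $\eta$ is isotropic, and then to invoke the Poincar\'e-type inequality for isotropic log-concave measures coming from Klartag's (and Klartag--Lehec's) progress on the KLS conjecture, as referenced in the introduction.

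\textbf{Reduction and gradient bound.}
By restricting to the affine span of the support of $\eta$, we may assume the covariance matrix $\Sigma$ of $\eta$ is strictly positive definite. Writing each $w \in {\mathcal W}$ as $w(x) = \langle a_w, x\rangle + b_w$, we have $\Var_\eta(w) = a_w^\top \Sigma a_w$. The affine substitution $y = \Sigma^{-1/2}(x - \E_\eta x)$ pushes $\eta$ forward to an isotropic log-concave probability measure $\tilde\eta$ on $\R^d$ (log-concavity being preserved by affine maps), and each $w$ becomes an affine functional $\tilde w(y) = \langle \Sigma^{1/2} a_w, y\rangle + c_w$ with $\|\Sigma^{1/2} a_w\|^2 = \Var_\eta w$; variances are preserved under the change of variables. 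Set $M(y) \coloneqq \sup_{w \in {\mathcal W}} \tilde w(y)$, which is convex (as a supremum of affine functions) and, assuming it is finite on a set of positive measure (otherwise the claim is vacuous), globally Lipschitz with Lipschitz constant $L \coloneqq \sup_w \|\Sigma^{1/2} a_w\|$. Indeed, for any $y_1,y_2$ and any $\varepsilon>0$ one picks $w$ with $\tilde w(y_1) > M(y_1)-\varepsilon$, and linearity of $\tilde w$ gives $M(y_2) - M(y_1) \geq -\varepsilon - L\|y_1-y_2\|$. Hence $M$ is a.e.\ differentiable by Rademacher, with $\|\nabla M(y)\|^2 \leq L^2 = \sup_{w\in{\mathcal W}}\Var_\eta w$.

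\textbf{Poincar\'e inequality and conclusion.}
By the cited progress of Klartag on the KLS conjecture, every isotropic log-concave probability measure on $\R^d$ satisfies a Poincar\'e inequality of the form
$$\Var_{\tilde\eta}(f) \ll \log(2+d)\cdot \E_{\tilde\eta}\|\nabla f\|^2$$
for all locally Lipschitz $f$. Applying this to $f = M$, combining with the gradient bound above, and using that $\Var_\eta(\sup_w w) = \Var_{\tilde\eta}(M)$ by the affine equivalence, one obtains
$$\Var_\eta\left(\sup_{w\in{\mathcal W}} w\right) \ll \log(2+d)\cdot \sup_{w \in {\mathcal W}} \Var_\eta w,$$
which is the desired estimate.

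\textbf{Main obstacle.}
The genuine mathematical content is entirely packaged into the Klartag-type Poincar\'e bound for isotropic log-concave measures; once that deep input is available, the remainder is a short subgradient/Lipschitz computation followed by an affine normalization. The only residual technicalities are measurability of $M$ when ${\mathcal W}$ is uncountable (which may be finessed by restricting to a countable dense subfamily, since an affine functional is determined by its values on a dense subset of a bounded region) and the degenerate case in which $\Sigma$ is singular or $M$ is $\tilde\eta$-a.s.\ infinite, both of which are handled either by restriction to the affine span of $\mathrm{supp}(\eta)$ or by interpreting the bound trivially. The polylogarithmic exponent of $d$ in the final estimate matches the current best bound on the Poincar\'e constant of isotropic log-concave measures and is not improvable by the argument given here.
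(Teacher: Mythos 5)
Your proof is correct and follows essentially the same route as the paper: reduce to the isotropic case via an affine normalization, observe that the supremum of affine functionals with variance at most $V$ is $\sqrt{V}$-Lipschitz in the isotropized coordinates, and then invoke the Poincar\'e inequality with constant $O(\log(2+d))$ for isotropic log-concave measures coming from Klartag's work on the KLS conjecture. The paper packages the first step as a separate Proposition (a weighted Poincar\'e inequality stated via the inertia matrix $\|M\|_{\mathrm{op}}$, proved by Cheeger's inequality in Milman's form plus Klartag's Cheeger-constant bound) and then deduces the lemma, whereas you go directly to the isotropic Poincar\'e bound — the content is the same. Your extra remarks about degenerate covariance (handled by restricting to the affine hull of the support, as in the paper via Borell) and measurability of the supremum are reasonable bookkeeping and do not change the argument.
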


This lemma is a consequence of Cheeger's inequality and recent work of Klartag \cite{Klartag-KLS} on the KLS conjecture \cite{KLS}; we give the details in Section \ref{log-concave-sec}.  In view of this lemma, it would now suffice to establish the variance bound
$$ \Var w(\gamma_1, \gamma_2) = O( n^{4-c} )$$
for all $v \in T$ and $w \in {\mathcal W}_v$ and some constant $c>0$, where $(\gamma_1,\gamma_2)$ was the random variable \eqref{minor}; the additional factor of $n^{-c}$ is needed to overcome the logarithmic loss in Lemma \ref{lem:17}.  This is a variance estimate for linear statistics of the GUE minor process.  As it turns out, the covariance estimates for eigenvalue gaps of GUE established by Cipolloni, Erd\H{o}s and Schr\"{o}der \cite{Cip}, combined with some further manipulations from the theory of determinantal processes to analyze the minor process, are \emph{almost} enough to obtain this sort of bound; there is however a technical difficulty because the bounds in \cite{Cip} are only established in the bulk of the spectrum and not on the edge.  However, the contributions coming from the edge region can be shown by relatively crude estimates to be (very slightly) non-trivial, and after removing these contributions to focus on the bulk contribution we will be able to make the above strategy work. In principle one could obtain stronger quantitative estimates than the $o(n^4)$ bound in Theorem \ref{mainthm} either by extending the results in \cite{Cip} to apply closer to the edge, or by making further progress on the KLS conjecture, but we will not attempt to do so here.

\subsection{Acknowledgments}
The first author is supported by a Swarna Jayanti fellowship and a grant associated with the Infosys-Chandrasekharan virtual center for Random Geometry. The second author is supported by NSF grants DMS-1712862 and DMS-2153742.
The third author is supported by NSF grant DMS-1764034 and by a Simons Investigator Award.  
We are  grateful to Amol Aggarwal for many helpful discussions, in particular for informing us of \cite{Cip}. We are grateful to Ronen Eldan for a helpful communication.

\section{Poincar\'e inequalities on log-concave measures}\label{log-concave-sec}

In this section we establish a useful Poincar\'e inequality over log-concave measures, which among other things implies Lemma \ref{lem:17}.  Namely, we show

\begin{proposition}[Poincar\'e inequality on log-concave measures]\label{log-concave-poin}
Let $\eta$ be an log-concave probability measure in $\R^d$ with finite second moments, and define the $d \times d$ inertia matrix $M$ by the formula
$$ M \coloneqq \E_\eta xx^T - (\E_\eta x) (\E_\eta x)^T.$$
Then for any Lipschitz function $f \colon \R^d \to \R$, one has
$$ \Var_\eta f \ll \left( \E_\eta |\nabla f|^2  \right)\| M \|_{\mathrm{op}} \log(2+d).$$
\end{proposition}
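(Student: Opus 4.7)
The plan is to reduce Proposition~\ref{log-concave-poin} to the isotropic log-concave case via an affine change of variables, apply Klartag's recent bound \cite{Klartag-KLS} on the KLS conjecture to control the isoperimetric (Cheeger) constant of an isotropic log-concave measure by $1/\sqrt{\log(2+d)}$, and then convert this isoperimetric bound into a Poincar\'e inequality using the classical Cheeger inequality $\lambda_1 \geq \psi^2/4$.

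In more detail, by translating if necessary we may assume $\E_\eta x = 0$. Suppose first that $M$ is nonsingular, and let $\tilde\eta$ denote the pushforward of $\eta$ under the linear map $T(x) \coloneqq M^{-1/2} x$. Since log-concavity is preserved by invertible linear maps, $\tilde\eta$ is log-concave, and by construction it has mean zero and covariance matrix $I_d$, i.e., it is isotropic. Klartag's theorem then yields a lower bound on the Cheeger constant $\psi_{\tilde\eta} \gg (\log(2+d))^{-1/2}$, and Cheeger's inequality gives the Poincar\'e inequality
\begin{equation}\label{iso-poin}
\Var_{\tilde\eta} \tilde f \ll \log(2+d) \, \E_{\tilde\eta} |\nabla \tilde f|^2
\end{equation}
for every Lipschitz $\tilde f\colon \R^d\to\R$.

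To transfer \eqref{iso-poin} back to $\eta$, given Lipschitz $f\colon \R^d\to \R$ I would set $\tilde f(y) \coloneqq f(M^{1/2} y)$. Then $\Var_{\tilde\eta} \tilde f = \Var_\eta f$, while the chain rule gives $\nabla \tilde f(y) = M^{1/2} \nabla f(M^{1/2} y)$, so
$$|\nabla \tilde f(y)|^2 = \langle M \nabla f(M^{1/2}y), \nabla f(M^{1/2}y)\rangle \leq \|M\|_{\mathrm{op}} \, |\nabla f(M^{1/2}y)|^2$$
pointwise. Taking expectations and changing variables gives $\E_{\tilde\eta} |\nabla \tilde f|^2 \leq \|M\|_{\mathrm{op}} \, \E_\eta |\nabla f|^2$, which combines with \eqref{iso-poin} to yield the claimed estimate. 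The degenerate case where $M$ is singular reduces to the nonsingular case by restricting the entire argument to the affine hull of the support of $\eta$, whose dimension is at most $d$ (and the support is itself log-concave on that affine hull).

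The only real obstacle is to ensure that Klartag's bound can be invoked in the precise form stated above, namely a lower bound on the Cheeger constant of any isotropic log-concave probability measure of the form $c/\sqrt{\log(2+d)}$; given this, the classical Cheeger inequality $\lambda_1 \geq \psi^2/4$ directly produces \eqref{iso-poin}, and the rest of the argument is the standard affine normalization. In particular no log-concave-specific refinement of Cheeger's inequality (such as Buser--Ledoux) is needed.
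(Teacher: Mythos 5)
Your proof is correct and follows essentially the same route as the paper: translate to mean zero, push forward by $M^{-1/2}$ to reduce to the isotropic case, invoke Klartag's bound $D_{\mathrm{Che}}(\tilde\eta) \gg (\log(2+d))^{-1/2}$, and apply Cheeger's inequality to obtain the Poincar\'e estimate. The only cosmetic difference is that you handle the degenerate case by restricting to the affine hull of the support, whereas the paper cites Borell's theorem for the same reduction; both are fine.
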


\begin{proof}  By a theorem of Borell \cite{borell}, log-concave probability measures are absolutely continuous with respect to some affine subspace of $\R^d$, so without loss of generality we can take $\eta$ to be absolutely continuous with some density $\rho$, so in particular the inertia matrix is non-singular. By translating we may assume the mean zero condition $\E_\eta x = 0$, and then by pushing forward by $M^{-1/2}$ we may assume that $M$ is the identity, that is to say we may assume that $\eta$ is an \emph{isotropic measure} in the sense that
$$ \E_\eta x = 0; \quad \E_\eta xx^T = I_d.$$

Define the \emph{Cheeger constant} $D_{\mathrm{Che}}(\eta)$ of $\eta$ (with respect to the Euclidean inner product) by the formula
$$ D_{\mathrm{Che}}(\eta) \coloneqq \inf_{A \subset \R^d} \frac{\int_{\partial A} \rho}{\min(\eta(A), 1 - \eta(A))}$$
where the infimum runs over all open subsets $A$ of $\R^d$ with smooth boundary with $0 < \eta(A) < 1$, and $\partial_A$ is integrated using surface measure.  By the Cheeger inequality in the form of \cite[Theorem 1.5]{Emanuel}, one has the Poincar\'e inequality
$$ D_{\mathrm{Che}}(\eta)^2 \Var_\eta f \ll \E_\eta |\nabla f|^2 $$
so the task reduces to (and is in fact equivalent to) the lower bound
$$D_{\mathrm{Che}}(\eta) \gg \frac{1}{\sqrt{\log(2+d)}}$$
on the Cheeger constant of an isotropic log-concave measure.  But this follows from
recent work of Klartag \cite[Theorem 1.2]{Klartag-KLS} (building upon previous advances in \cite{chen, KlartagLehec, Lee-Vemp}).
\end{proof}

\begin{remark} The KLS conjecture asserts the uniform lower bound $D_{\mathrm{Che}}(\eta) \gg 1$ for all isotropic log-concave measures $\eta$.  If true, this conjecture would allow us to remove the $\log(2+d)$ factor in the above proposition.
\end{remark}

\begin{proof}[Proof of Lemma \ref{lem:17}]  As in the proof of Proposition \ref{log-concave-poin}, we may assume that $\eta$ is isotropic; we may also normalize $\sup_{w \in {\mathcal W}} \Var_\eta(w) = 1$.  From the isotropy of $\eta$ we have
$$ |\nabla w|^2 = \Var_\eta(w) $$
for any affine form $w$.  In particular, each $w$ in ${\mathcal W}$ is $1$-Lipschitz, so $\sup_{w \in {\mathcal W}} w$ is also. The claim now follows from Proposition \ref{log-concave-poin}.
\end{proof}

The Poincar\'e inequality treats each of the $d$ basis vectors $e_1,\dots,e_d$ of $\R^d$ equally.  For our applications it is convenient to work with a non-isotropic version of this inequality in which different groups of basis vectors are treated with a different weight.

\begin{proposition}[Weighted Poincar\'e inequality on log-concave measures]\label{log-concave-poin-weight}
Let $\eta$ be an log-concave probability measure in $\R^d$ with finite second moments.  Express $\R^d$ as a Cartesian product $\R^{d_1} \times \dots \times \R^{d_k}$ for some $d_1,\dots,d_k$ summing to $d$ (so that a vector $x \in \R^d$ is expressed as $(x_1,\dots,x_k)$ for $x_j \in \R^{d_j}$), and for each $i=1,\dots,k$, and define the $d_j \times d_j$ inertia matrix $M_j$ by the formula
$$ M_j \coloneqq \E_\eta x_jx_j^T - (\E_\eta x_j) (\E_\eta x_j)^T.$$
Then for any Lipschitz function $f \colon \R^d \to \R$, and any weights $\alpha_1,\dots,\alpha_k > 0$, one has
$$ \Var_\eta f \ll \left( \E_\eta \sum_{j=1}^k \alpha_j |\nabla_j f|^2\right) \left( \sum_{j=1}^k \alpha_j^{-2} \| M_j \|_{\mathrm{op}}^2 \right)^{1/2} \log(2+d)$$
where $\nabla_j f \colon \R^d \to \R^{d_j}$ denotes the gradient with respect to the basis vectors of the $\R^{d_j}$ factor of $\R^d$.
\end{proposition}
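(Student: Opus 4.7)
The plan is to deduce Proposition \ref{log-concave-poin-weight} from the unweighted Poincaré inequality of Proposition \ref{log-concave-poin} by absorbing the block weights into the ambient metric via a linear change of variables. Let $A \coloneqq \diag(\alpha_1 I_{d_1}, \dots, \alpha_k I_{d_k})$, and let $\tilde\eta \coloneqq T_*\eta$ be the push-forward of $\eta$ under $T \colon x \mapsto A^{-1/2} x$. Since linear images of log-concave measures are log-concave, $\tilde\eta$ is a log-concave probability measure on $\R^d$; its inertia matrix is $\tilde M = A^{-1/2} M A^{-1/2}$, where $M$ is the full inertia of $\eta$. With $g(y) \coloneqq f(A^{1/2} y)$ one has $\Var_{\tilde\eta} g = \Var_\eta f$, and the chain rule gives $|\nabla g(y)|^2 = \sum_{j=1}^k \alpha_j |\nabla_j f(A^{1/2} y)|^2$, so that $\E_{\tilde\eta}|\nabla g|^2 = \E_\eta \sum_j \alpha_j |\nabla_j f|^2$. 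Applying Proposition \ref{log-concave-poin} to the pair $(g, \tilde\eta)$ therefore produces
$$\Var_\eta f \;\ll\; \Bigl(\E_\eta \sum_{j=1}^k \alpha_j|\nabla_j f|^2\Bigr)\,\|A^{-1/2} M A^{-1/2}\|_{\mathrm{op}} \log(2+d).$$

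It thus suffices to prove the purely linear-algebraic estimate
$$\|A^{-1/2} M A^{-1/2}\|_{\mathrm{op}} \;\ll\; \Bigl(\sum_{j=1}^k \alpha_j^{-2}\|M_j\|_{\mathrm{op}}^2\Bigr)^{1/2}$$
for every positive semi-definite block matrix $M$ with diagonal blocks $M_j$, or equivalently to bound the generalized Rayleigh quotient $\sup_{u \neq 0}(u^T M u)/(u^T A u)$ by the stated $\ell^2$ aggregate. A natural starting point is to factor $M = N^T N$ so that each column block $N_j$ of $N$ satisfies $\|N_j\|_{\mathrm{op}}^2 = \|M_j\|_{\mathrm{op}}$, which yields the off-diagonal bound $\|M_{ij}\|_{\mathrm{op}} = \|N_i^T N_j\|_{\mathrm{op}} \leq \|M_i\|_{\mathrm{op}}^{1/2}\|M_j\|_{\mathrm{op}}^{1/2}$ as the key instance of the Cauchy--Schwarz inequality for PSD block matrices. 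Writing $u = (u_1,\dots,u_k)$, one obtains the pointwise identity $u^T A^{-1/2} M A^{-1/2} u = |\sum_j \alpha_j^{-1/2} N_j u_j|^2$, which will serve as the basis for the aggregation.

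The main obstacle is combining these block-wise estimates so as to recover the $\ell^2$ aggregate rather than the $\ell^1$ sum $\sum_j \alpha_j^{-1}\|M_j\|_{\mathrm{op}}$ that the triangle inequality produces immediately. My intended approach is a dyadic partition of the index set $\{1,\dots,k\}$ according to the magnitudes of $\alpha_j^{-1}\|M_j\|_{\mathrm{op}}$: within each dyadic level the rank-one bound on the restricted sub-quadratic-form is essentially sharp, and the $O(\log d)$ dyadic levels are then recombined via Hölder's inequality at exponent $2$, with any additional logarithmic factor absorbed by the $\log(2+d)$ slack that Proposition \ref{log-concave-poin} already provides. This final step, passing from the $\ell^1$ bound at fixed dyadic scale to the $\ell^2$ bound across scales, is the part of the argument I expect to require the most care, since the PSD Cauchy--Schwarz estimate on individual blocks is inherently rank-one in structure and the $\ell^2$ gain must be extracted from the orthogonality between dyadic scales.
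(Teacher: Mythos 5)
Your reduction to the linear-algebraic estimate on $\|A^{-1/2}MA^{-1/2}\|_{\mathrm{op}}$ is exactly the paper's: normalize the weights to $1$ by pushing forward under $x \mapsto A^{1/2}x$ and apply Proposition~\ref{log-concave-poin}. The problem lies entirely in the final step, and the difficulty you flag as needing care is in fact fatal: the $\ell^2$ bound
$$\|M\|_{\mathrm{op}} \ll \Bigl(\sum_{j=1}^k \|M_j\|_{\mathrm{op}}^2\Bigr)^{1/2}$$
is \emph{false} for general PSD block matrices, even with room for logarithmic losses. Take all $d_j=1$ and $M$ the $k\times k$ all-ones matrix (this is a legitimate inertia matrix, e.g. of $x_j = X$ for a single random variable $X$); then $\|M\|_{\mathrm{op}} = k$ while $(\sum_j\|M_j\|_{\mathrm{op}}^2)^{1/2} = k^{1/2}$, and the gap $k^{1/2}$ overwhelms any $\log(2+d)$ factor. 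Consequently no dyadic reorganization, Hölder step, or appeal to ``slack'' in Proposition~\ref{log-concave-poin} can recover the claimed estimate, because the inequality you are trying to aggregate to is simply not true. The ``orthogonality between dyadic scales'' you hope to exploit does not exist: the cross terms in $u^T M u$ are genuinely present and can align constructively across all blocks.

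What is true, and what the triangle-inequality calculation you wrote down actually gives, is the $\ell^1$ version: from $(x^T M x)^{1/2} \le \sum_j (x_j^T M_j x_j)^{1/2} \le \sum_j \|M_j\|_{\mathrm{op}}^{1/2}|x_j|$ followed by Cauchy--Schwarz one gets $\|M\|_{\mathrm{op}} \le \sum_j \|M_j\|_{\mathrm{op}}$, and hence after rescaling
$$\Var_\eta f \ll \Bigl(\E_\eta\sum_{j=1}^k\alpha_j|\nabla_j f|^2\Bigr)\Bigl(\sum_{j=1}^k \alpha_j^{-1}\|M_j\|_{\mathrm{op}}\Bigr)\log(2+d).$$
This is also what the paper actually invokes downstream (cf.\ the factor $\|M_{\mathtt{bulk}}\|_{\mathrm{op}} + n^{-1}\|M_{\mathtt{edge}}\|_{\mathrm{op}}$ in \eqref{again}, which is the $\ell^1$ form with $\alpha_{\mathtt{bulk}}=1$, $\alpha_{\mathtt{edge}}=n$). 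In short: the statement as printed, with $(\sum_j\alpha_j^{-2}\|M_j\|_{\mathrm{op}}^2)^{1/2}$, contains an error (the paper's own proof has a compensating slip, writing $\|M_j\|_{\mathrm{op}}$ where $\|M_j\|_{\mathrm{op}}^{1/2}$ is meant in the middle line of the chain); the correct target is the $\ell^1$ aggregate, and it follows from the triangle inequality with no dyadic machinery. You should abandon the dyadic upgrade and instead record the $\ell^1$ conclusion directly.
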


\begin{proof}  By pushing forward $\eta$ by the map $(x_1,\dots,x_k) \mapsto (\alpha_1^{1/2} x_1, \dots, \alpha_k^{1/2} x_k)$ we may normalize $\alpha_j=1$ for all $j$, so that $\sum_{j=1}^k \alpha_j |\nabla_j f|^2 = |\nabla f|^2$.  By Proposition \ref{log-concave-poin-weight}, it thus suffices to establish the bound
$$ \| M \|_{\mathrm{op}} \leq \left( \sum_{j=1}^k \| M_j \|_{\mathrm{op}}^2 \right)^{1/2}.$$
For any $x = (x_1,\dots,x_k) \in \R^d$, where $x_j \in \R^{d_j}$ for all $j$, it follows from the positive semi-definiteness of $M$ and the triangle inequality followed by Cauchy--Schwarz that
\begin{align*}
 (x^T M x)^{1/2} &\leq \sum_{j=1}^k (x_j^T M_j x_j)^{1/2} \\
&\leq \sum_{j=1}^k \| M_j \|_{\mathrm{op}} |x_j| \\
&\leq \left( \sum_{j=1}^k \| M_j \|_{\mathrm{op}}^2\right)^{1/2} |x|
\end{align*}
and the claim follows.
\end{proof}

\section{The octahedron recurrence}\label{recurrence-sec}

Let $\lambda, \mu, \gamma, \pi \in \Spec^\circ$.  In \cite{KT2}, a volume-preserving, piecewise-linear \emph{octahedron recurrence}
\begin{equation}\label{octahedron-recur}
 \oct \colon \bigcup_\sigma \HIVE_{\sigma \boxplus \mu \rel \pi} \times \HIVE_{\gamma \boxplus \lambda \rel \sigma} \to \bigcup_\nu \HIVE_{\lambda \boxplus \mu \rel \nu} \times \HIVE_{\gamma \boxplus \nu \rel \pi}
\end{equation}
was constructed, and an explicit formula for it given using the work of Speyer \cite{Speyer}.  We now recall a version of this formula that will be convenient for our purposes.  We will identify a pair $(h,h') \in \HIVE_{\lambda \boxplus \mu \rel \nu} \times \HIVE_{\gamma \boxplus \nu \rel \pi}$ with a single function $\tilde h \colon T \cup T' \to \R$ defined on the square $T \cup T' = \{0,\dots,n\}^2$ by the formula
\begin{equation}\label{th-1}
 \tilde h(i,j) \coloneqq h(i,j)
\end{equation}
when $(i,j)$ lies in the triangle $T \coloneqq \{(i,j): 0 \leq i \leq j \leq n \}$, and
\begin{equation}\label{th-2}
\tilde h(i,j) \coloneqq h'(j,n-i+j) - \sum \gamma
\end{equation}
when $(i,j)$ lies in the opposite triangle $T' \coloneqq \{ (i,j): 0 \leq j \leq i \leq n \}$.  Note that both definitions agree on the diagonal $\{ (i,i): 0 \leq i \leq n \}$ due to the boundary values of the hives $h,h'$; see Figure \ref{fig:octahedron}.  The function $\tilde h$ will be rhombus concave on $T$ and on $T'$, but not necessarily concave along rhombi that cross the diagonal separating $T$ and $T'$.

\begin{figure}
\begin{center}
\includegraphics[scale=0.40]{./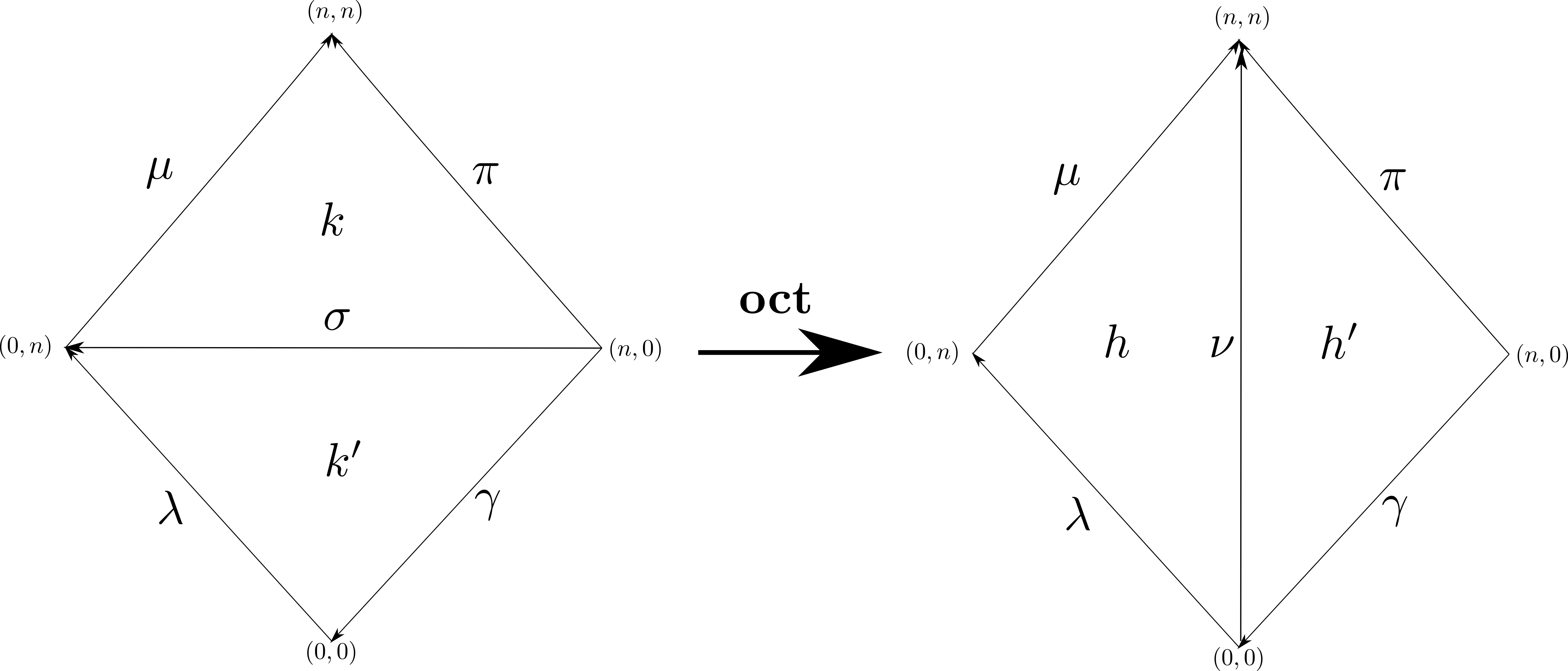}
\caption{A schematic depiction of the octahedron recurrence that transforms one pair $(k,k') \in \HIVE_{\sigma \boxplus \mu \rel \pi} \times \HIVE_{\gamma \boxplus \lambda \rel \sigma}$ of hives into another $(h,h') \in \HIVE_{\lambda \boxplus \mu \rel \nu} \times \HIVE_{\gamma \boxplus \nu \rel \pi}$.  The hives $h,h',k,k'$ have been shifted to lie on triangles $T, T', U, U'$ respectively. }\label{fig:octahedron}
\end{center}
\end{figure}

\begin{figure}
\begin{center}
\includegraphics[scale=0.40]{./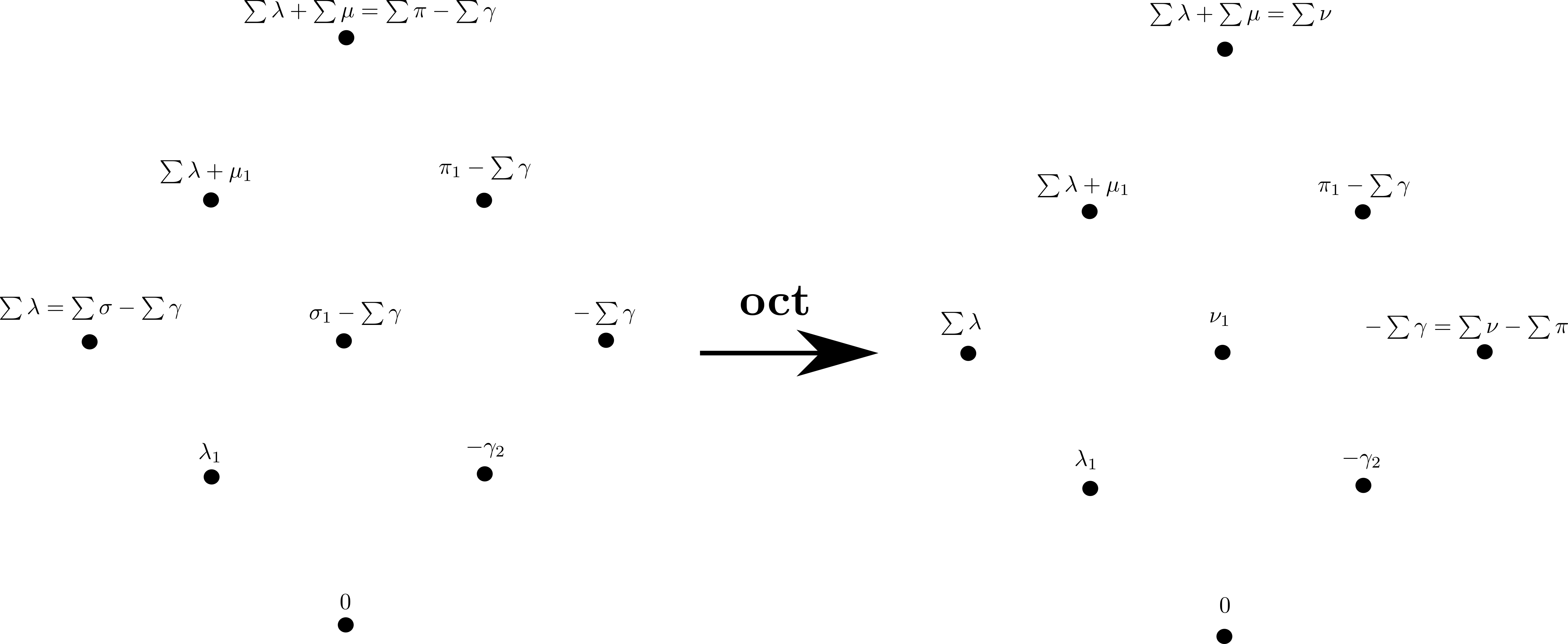}
\caption{The $n=2$ case of the octahedron recurrence.  One can determine the value $\nu_1$ in the right image from the data in the left image by the formula $\nu_1 = \max( \sum \lambda + \mu_1 + \gamma_1 - \sigma_1, \la_1 + \pi_1 - \sigma_1)$.}\label{fig:2d-octahedron}
\end{center}
\end{figure}

In a similar vein, we identify a pair $(k,k') \in \HIVE_{\sigma \boxplus \mu \rel \pi} \times \HIVE_{\gamma \boxplus \lambda \rel \sigma}$ with a single function $\tilde k \colon U \cup U' \to \R$ defined on the square\footnote{Even though $T \cup T'$ and $U \cup U'$ are both technically equal to the same set $\{0,\dots,n\}^2$, it is conceptually better to think of these sets as being distinct (except on the boundary).  In Appendix \ref{octahedron-app} we will view these two copies of $\{0,\dots,n\}^2$ as the upper and lower faces respectively of a certain tetrahedron $\mathtt{tet}$, and the octahedron recurrence $\oct$ can be constructed by ``excavating'' that tetrahedron.} $U \cup U' = \{0,\dots,n\}^2$ by the formula
\begin{equation}\label{tk-1}
 \tilde k(i,j) \coloneqq k(i+j-n, j) - \sum \gamma 
\end{equation}
when $(i,j)$ lies in the upper triangle $U \coloneqq \{ (i,j): i, j \leq n \leq i+j \}$, and
\begin{equation}\label{tk-2}
 \tilde k(i,j) \coloneqq k'(j,n-i) - \sum \gamma 
\end{equation}
when $(i,j)$ lies in the lower triangle $U' \coloneqq \{ (i,j): i, j \leq i+j \leq n \}$.  See Figure \ref{fig:octahedron}.  Again, the boundary values of $k,k'$ ensure that these definitions agree on the diagonal $\{ (i,n-i): 0 \leq i \leq n \}$.  

Given a pair $(k,k') \in \HIVE_{\sigma \boxplus \mu \rel \pi} \times \HIVE_{\gamma \boxplus \lambda \rel \sigma}$, the octahedron recurrence produces a pair $\oct(k,k') = (h,h')$ in $\HIVE_{\lambda \boxplus \mu \rel \nu} \times \HIVE_{\gamma \boxplus \nu \rel \pi}$ for some $\nu$.  To describe this recurrence, we will describe how the combined function $\tilde h$ defined by \eqref{th-1}, \eqref{th-2} depends on the combined function $\tilde k$ defined by \eqref{tk-1}, \eqref{tk-2}.  For vertices $v = (i,j)$ on the boundary of $\{0,\dots,n\}^2$ (i.e., $i \in \{0,n\}$ or $j \in \{0,n\}$), the octahedron recurrence does nothing:
$$ \tilde h(i,j) \coloneqq \tilde k(i,j).$$
Note that this is consistent with the boundary conditions placed on $h,h',k,k'$.  

For vertices $v = (i,j)$ in the interior or $\{0,\dots,n\}^2$, the octahedron recurrence specifying $\tilde h(i,j)$ is more complicated to describe.  It was initially defined by recursively ``excavating'' a real-valued function on a tetrahedron $\{ (a,b,c,d) \in \Z^4: a,b,c,d \geq 0; a+b+c+d = n \}$ with $\tilde k$ describing the values on the top two faces, and $\tilde h$ the bottom two faces; see Appendix \ref{octahedron-app}.  An alternate description was given by Speyer \cite{Speyer} (and reproduced in \cite[\S 7]{KT2}), in terms of perfect matchings of an ``excavation graph'' associated to $(i,j)$.  We will use a modification of Speyer's formula that is more convenient for our purposes, in which the perfect matchings are replaced the dual concept of a \emph{lozenge tiling}.  To describe this formula we need some definitions.

\begin{definition}[Lozenges and border triangles]  A \emph{lozenge} is a quadruple $ABCD$ in $U$ or $U'$ that is one of following three forms for some $i,j \in \Z$:
\begin{itemize}
\item[(i)] $(A,B,C,D) = ((i,j), (i+1,j-1), (i+2,j-1), (i+1,j))$
\item[(ii)] $(A,B,C,D) = ((i,j), (i,j+1), (i-1,j+2), (i-1,j+1))$
\item[(iii)] $(A,B,C,D) = ((i,j), (i+1,j), (i+1,j+1), (i,j+1))$.
\end{itemize}
Lozenges of type (i) will be called \emph{blue} if they lie in $U$ and \emph{red} if they lie in $U'$; lozenges of type (ii) will be called \emph{red} if they lie in $U$ and \emph{blue} if they lie in $U'$; and lozenges of type (iii) that lie either in $U$ or in $U'$ will be called \emph{green}; see Figure \ref{fig:typical}.  A quadruple of the form (iii) that crosses the diagonal separating $U$ and $U'$ is \emph{not} considered to be a lozenge, but instead splits into two border triangles as defined below.  (The colors of lozenges will not be needed immediately, but will play a useful role later in this section.)

A \emph{border edge} is an edge $AC$ of the form $(A,C) = ((i,n-i), (i+1,n-i-1))$ for some $0 \leq i < n$; the border edges thus separate $U$ and $U'$.  Each border edge $(A,C) = ((i,n-i), (i+1,n-i-1))$ is bordered by two \emph{border triangles} $ABC$, defined as follows:
\begin{itemize}
\item (Upward triangle) $(A,B,C) = ((i,n-i), (i+1,n-i), (i+1,n-i-1))$.
\item (Downward triangle) $(A,B,C) = ((i,n-i), (i,n-i-1), (i+1,n-i-1))$.
\end{itemize}
Note that upward triangles lie (barely) in $U$, while downward triangles lie (barely) in $U'$.

Given a lozenge $\edge = ABCD$ and a function $\tilde k \colon \{0,\dots,n\}^2 \to \R$ defined as before, we define the \emph{weight} $\weight(\edge) = \weight(\edge, \tilde k)$ to be the quantity
$$\weight(\edge) \coloneqq \frac{1}{3} (\tilde k(A) + \tilde k(C) - \tilde k(B) - \tilde k(D)).$$
Similarly, given a border triangle $\Delta = ABC$, the weight $\weight(\Delta) = \weight(\tau, \tilde k)$ is defined as\footnote{One could also define the weight here more symmetrically as $\frac{1}{6} \tilde k(A) - \frac{1}{3} \tilde k(B) + \frac{1}{6} \tilde k(C)$, provided that one also adjusts the formula \eqref{hexagon-weight} below to $\frac{1}{3}(\tilde k(B) + \tilde k(C) + \tilde k(E) + \tilde k(F)) - \frac{1}{6} (\tilde k(A) + \tilde k(D))$; however the asymmetric form is more convenient technically for our application.}
$$\weight(\Delta) \coloneqq \frac{1}{3} (\tilde k(A) - \tilde k(B)).$$
\end{definition}

\begin{definition}[Octahedron recurrence]\label{octa-def}  If $v = (i,j)$ lies in the interior of $\{0,\dots,n\}^2 = T \cup T'$, then the \emph{excavation hexagon}
$\hexagon_v = ABCDEF$ in $\{0,\dots,n\}^2 = U \cup U'$ centered at $v$ is defined as follows:
\begin{itemize}
\item If $v \in T$ (i.e., $i \leq j$), then
$$ (A,B,C,D,E,F) = ((0,n), (0,j), (i,j-i), (n+i-j,j-i), (n+i-j,j), (i,n)).$$
\item If $v \in T'$ (i.e., $i \geq j$), then
$$ (A,B,C,D,E,F) = ((i-j, n+j-i), (i-j,j), (i,0), (n,0), (n,j), (i,n+j-i)).$$
\end{itemize}
Note that these two definitions agree when $v \in T \cap T'$ (i.e., when $i=j$). The original point $v = (i,j)$ is then the intersection of the diagonals $BE$ and $CF$.  The line $AD$ is called the \emph{equator}; it lies on the border between $U$ and $U'$.  The \emph{weight} $\weight(\hexagon_v) = \weight(\hexagon_v,\tilde k)$ of this hexagon is defined as
\begin{equation}\label{hexagon-weight}
 \weight(\hexagon_v) \coloneqq \frac{1}{3} (\tilde k(B) + \tilde k(C) - \tilde k(D) + \tilde k(E) + \tilde k(F)).
\end{equation}
A \emph{lozenge tiling}\footnote{More precisely, this is a ``lozenge and border triangle'' tiling.} $\Xi$ of the excavation hexagon $\hexagon_v$ is a partition of the (solid) hexagon into (solid) lozenges and (solid) border triangles, such that each border edge on the equator is adjacent to exactly one border triangle in the tiling; see Figure \ref{fig:typical}.  An example of a lozenge tiling is the \emph{standard lozenge tiling} $\Xi_0$, in which the trapezoid $ABEF$ is tiled by blue lozenges in $U$ and by green lozenges and downward border triangles in $U'$, while the opposite trapezoid $BCDE$ is tiled by green lozenges and upward border triangles in $U$ and by red lozenges in $U'$; see Figure \ref{fig:standard}.

The \emph{weight} $w_\Xi = w_\Xi(\tilde k)$ of such a tiling is defined to be the sum of the weights of all the lozenges $\edge$ and triangles $\Delta$ in the tiling, as well as the weight of the entire hexagon $\hexagon_v$:
\begin{equation}\label{weight-form}
w_\Xi \coloneqq \sum_{\edge \in \Xi} \weight(\edge) + \sum_{\Delta \in \Xi} \weight(\Delta) + \weight(\hexagon_v).
\end{equation}
Note that the $w_\Xi$ depend linearly on $\tilde k$, and hence on $k, k'$.  We then define
$$ \tilde h(v) \coloneqq \max_{\Xi\ \mathrm{tiles}\ \hexagon_v} w_\Xi$$
\end{definition}

\begin{example} As a simple example, take $v = (1, n-1) \in T$ (assuming $n \geq 2$), then the excavation hexagon $\hexagon_v = ABCDEF$ is given by the unit hexagon centered at $v$:
$$ (A,B,C,D,E,F) = ((0,n), (0,n-1), (1,n-2), (2,n-2), (2, n-1), (1,n)).$$
This hexagon has two lozenge tilings, depicted in Figure \ref{fig:recurrence-example}.  For the tiling on the left, the blue lozenge has weight
$$ \frac{1}{3} (\tilde k(0,n) + \tilde k(2,n-1) - \tilde k(1,n) - \tilde k(1,n-1))$$
the red lozenge has weight
$$ \frac{1}{3} (\tilde k(0,n-1) + \tilde k(2,n-2) - \tilde k(1,n-1) - \tilde k(1,n-2))$$
the upward triangle has weight
$$ \frac{1}{3} (\tilde k(2,n-1) - \tilde k(1,n-1))$$
the downward triangle has weight
$$ \frac{1}{3} (\tilde k(0,n-1) - \tilde k(0,n))$$
and the hexagon has weight
$$ \frac{1}{3} (\tilde k(0,n-1) + \tilde k(1,n-2) - \tilde k(2,n-2) + \tilde k(2,n-1) + \tilde k(1,n)$$
leading to a total weight of
$$ \tilde k(2,n-1) + \tilde k(0,n-1) - \tilde k(1,n-1).$$
The tiling on the right can similarly be computed to have a total weight of
$$ \tilde k(1,n) + \tilde k(1,n-2) - \tilde k(1,n-1)$$
leading to the familiar octahedron relation
$$ \tilde h(1,n-1) = \max( \tilde k(2,n-1) + \tilde k(0,n-1) , \tilde k(1,n) + \tilde k(1,n-2) ) - \tilde k(1,n-1).$$
\end{example}

\begin{example} The lozenge tiling in Figure \ref{fig:typical} has weight
$$\tilde k(4,4) + \tilde k(3,3) - \tilde k(4,2) - \tilde k(2,3) + \tilde k(1,3) - \tilde k(2,2)  -\tilde k(3,1) + \tilde k(4,0) + \tilde k(2,1).$$
\end{example}

\begin{figure}
\begin{center}
\includegraphics[scale=0.40]{./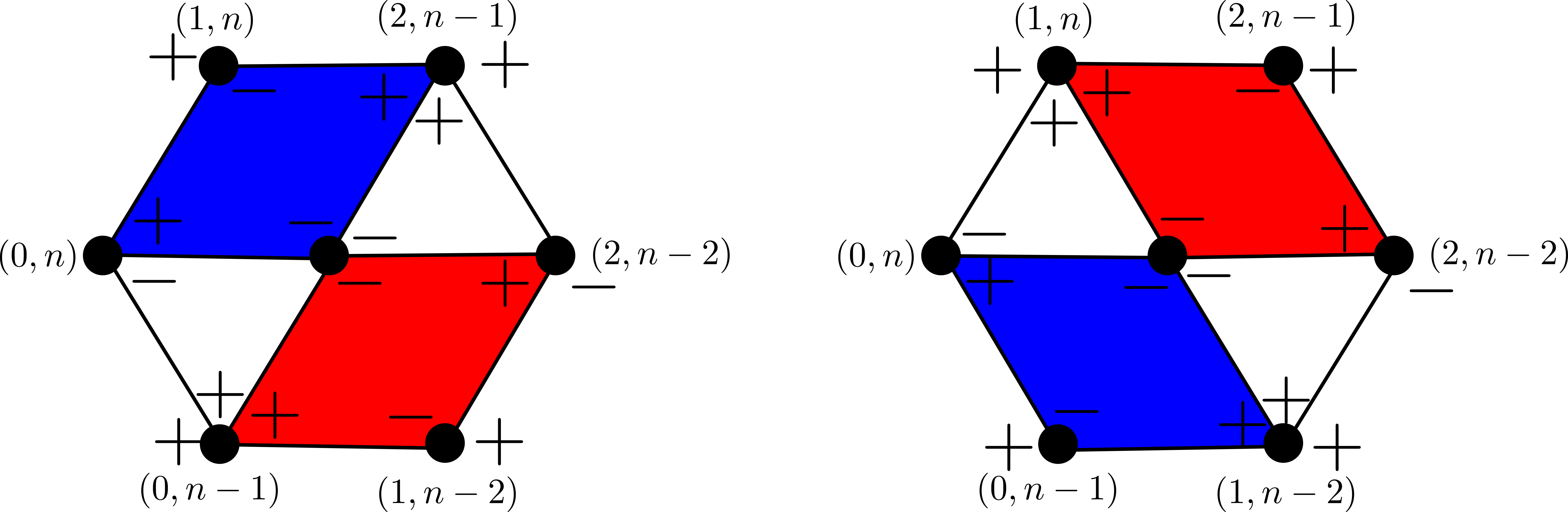}
\caption{The two lozenge tilings of $\hexagon_{(1,n-1)}$. The weight coefficients of the lozenges, border triangles, and hexagon are marked with $+$ (for a weight of $+1/3$) and $-$ (for a weight of $-1/3$). The tiling on the left is the standard one.}\label{fig:recurrence-example}
\end{center}
\end{figure}

\begin{figure}
\begin{center}
\includegraphics[scale=0.40]{./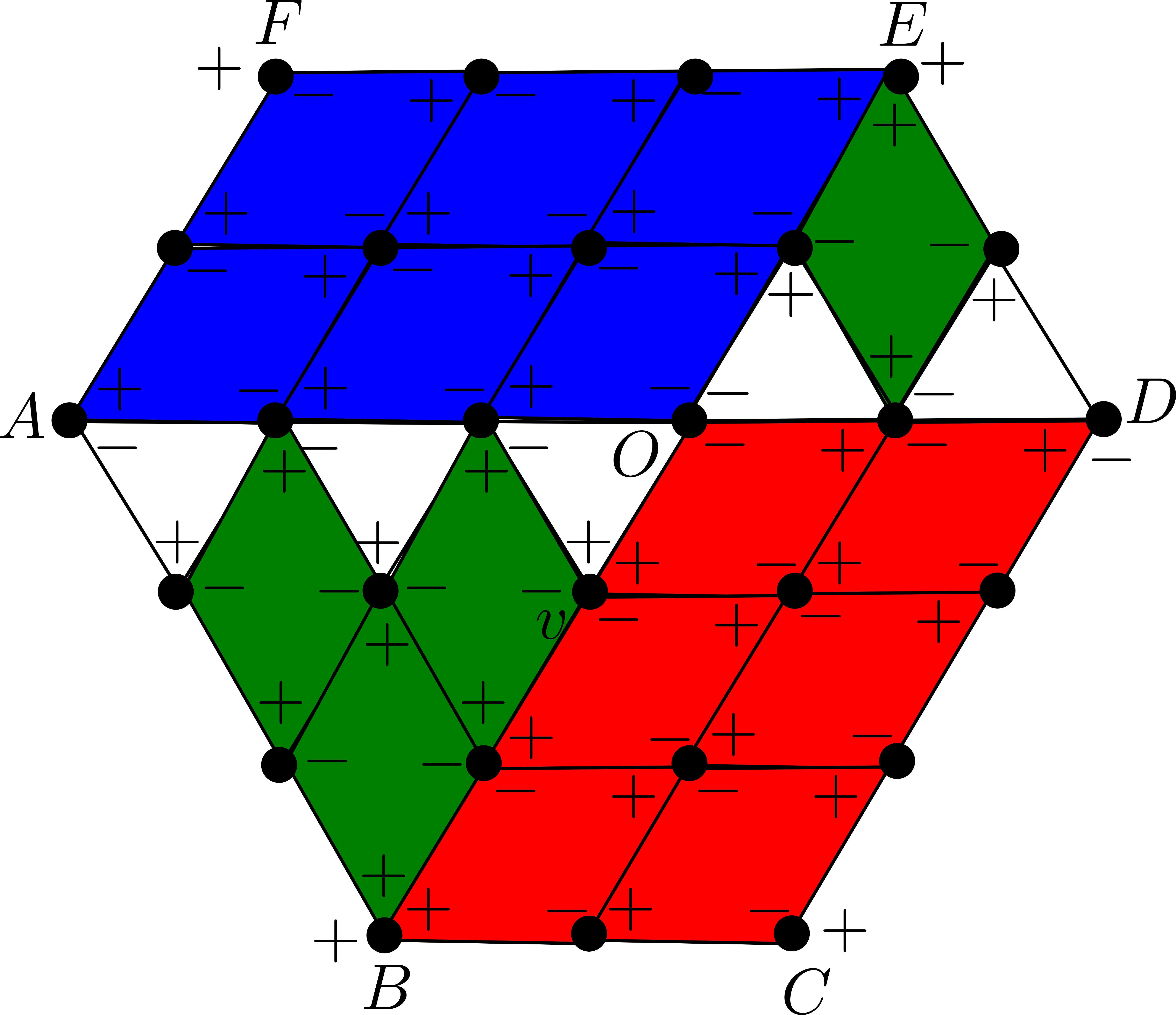}
\caption{The standard lozenge tiling of a hexagon $ABCDEF$ centered at $v$.  The total weight of this tiling is $\tilde k(E) + \tilde k(B) - \tilde k(O)$, where $O$ is the intersection of the diagonal $BE$ with the equator $AD$.}\label{fig:standard}
\end{center}
\end{figure}

\begin{figure}
\begin{center}
\includegraphics[scale=0.40]{./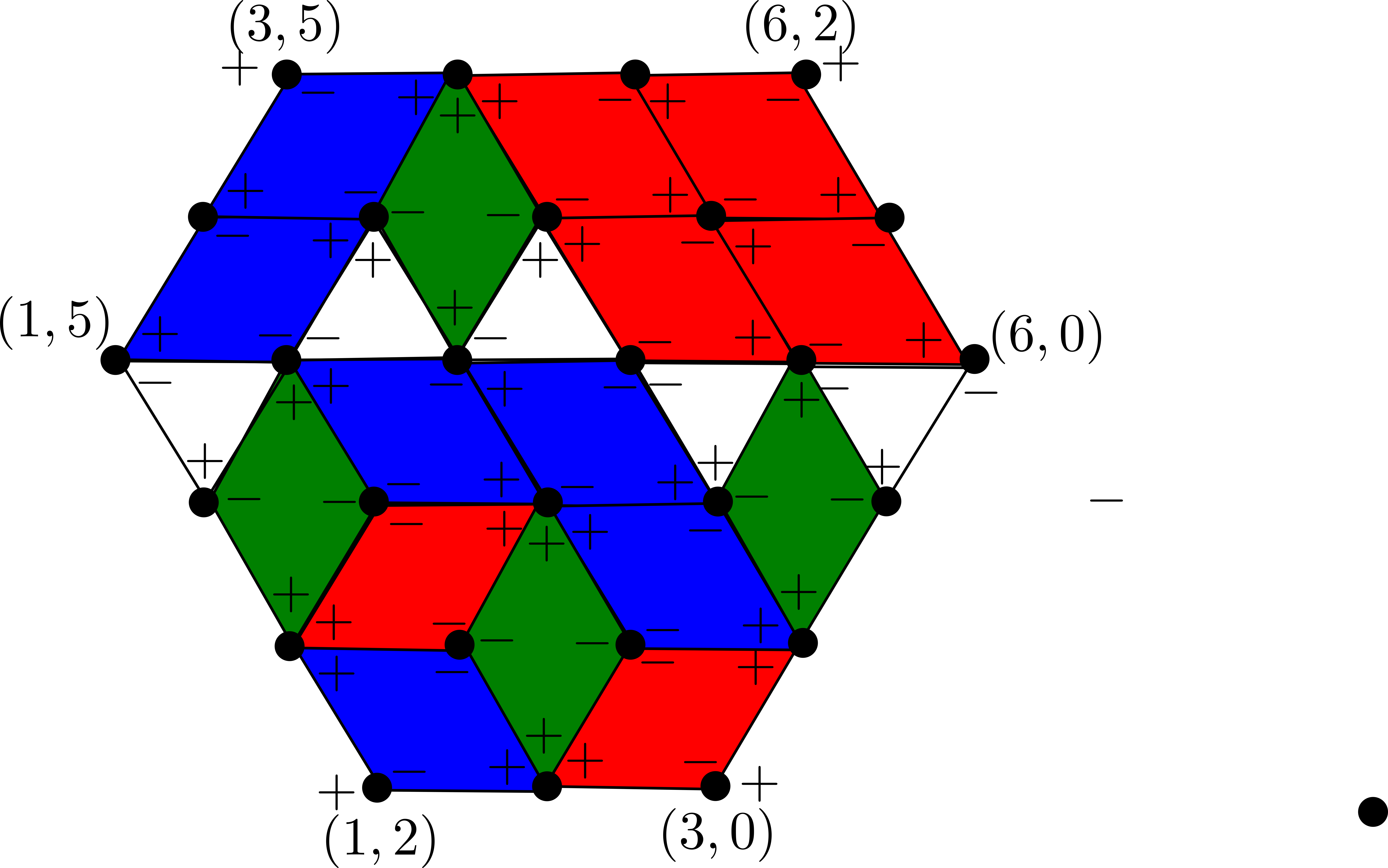}
\caption{A typical lozenge tiling of $\hexagon_{(2,3)}$, $n=6$. }\label{fig:typical}
\end{center}
\end{figure}

\begin{theorem}\label{octahedron}  The construction above agrees with the octahedron recurrence described in \cite{KT2}.  
\end{theorem}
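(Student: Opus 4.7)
The plan is to reduce the claim to the matching version of the octahedron recurrence given by Speyer in \cite{Speyer} (and reproduced in \cite[\S 7]{KT2}), by exhibiting a weight-preserving bijection between the lozenge-and-border-triangle tilings $\Xi$ of the excavation hexagon $\hexagon_v$ and the perfect matchings of Speyer's excavation graph $G_v$. Recall that in \cite{Speyer}, one associates to each vertex $v$ a bipartite planar graph $G_v$ embedded inside $\hexagon_v$, and expresses
\[
\tilde h(v) = \max_{M} w_M(\tilde k),
\]
where $M$ ranges over perfect matchings of $G_v$ and each $w_M$ is a prescribed linear functional of the values of $\tilde k$ at the vertices of the graph. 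It is this identity that I will reproduce in the lozenge form \eqref{weight-form}.

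The first step is to recall the classical planar duality between lozenge tilings and dimer configurations. Any tiling of $\hexagon_v$ by lozenges (each a union of two adjacent unit equilateral triangles) corresponds bijectively to a perfect matching of the honeycomb graph dual to the triangulation of $\hexagon_v$: each lozenge picks out the unique edge of $G_v$ joining its two constituent triangles. In our setting the equator separating $U$ and $U'$ breaks one of the three families of would-be lozenges into border triangles; on the graph side these correspond to the ``defect'' edges of $G_v$ crossing the equator, with the two orientations (upward/downward) of the triangle encoding which of the two equatorial triangles is matched across. This gives the desired bijection $\Xi \leftrightarrow M(\Xi)$ between tilings of $\hexagon_v$ in the sense of Definition~\ref{octa-def} and perfect matchings of $G_v$.

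The second step is to verify that under this bijection, the total weight $w_\Xi(\tilde k)$ defined in \eqref{weight-form} coincides with Speyer's weight $w_{M(\Xi)}(\tilde k)$. Both weights are linear in $\tilde k$, and both decompose as a sum of local contributions (one per lozenge or border triangle) plus a tiling-independent constant (for us, $\weight(\hexagon_v)$). Since the flip move on lozenge tilings — rotating the three lozenges meeting at an interior vertex by $\pi/3$ — is known to connect any two tilings of a simply connected hexagonal region, and since on the matching side the corresponding move toggles a hexagonal face of $G_v$, it suffices to verify (i) that $w_\Xi = w_{M(\Xi)}$ for a single reference tiling, and (ii) that each flip changes the two sides by the same amount. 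For (i) I take the standard tiling $\Xi_0$ of Definition~\ref{octa-def}; a direct computation using the definitions of $\weight(\edge)$, $\weight(\Delta)$, and $\weight(\hexagon_v)$ telescopes to $\tilde k(E) + \tilde k(B) - \tilde k(O)$ as indicated in Figure \ref{fig:standard}, matching Speyer's evaluation on the corresponding ``trivial'' matching. For (ii), a flip around an interior vertex $P$ replaces three lozenges $\edge_1,\edge_2,\edge_3$ meeting at $P$ by three complementary ones $\edge_1',\edge_2',\edge_3'$; a short calculation shows that $\sum_i \weight(\edge_i') - \sum_i \weight(\edge_i)$ depends only on the six outer vertices of the hexagon around $P$, in exactly the same way as Speyer's weight does under the dual dimer toggle. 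A separate but analogous check must be made for flips that swap an upward and a downward border triangle across an equatorial edge; here the asymmetric weight convention $\weight(\Delta) = \tfrac{1}{3}(\tilde k(A) - \tilde k(B))$ together with the hexagon constant $\weight(\hexagon_v)$ is tuned precisely so that the equatorial flip gives the correct increment.

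The main obstacle is precisely this bookkeeping across the equator: the border triangles and the hexagon constant together play the role of the boundary conditions in Speyer's graph, and getting the coefficients $\pm \tfrac{1}{3}$ to line up with Speyer's conventions requires care. Once (i) and (ii) are in hand, the identity $w_\Xi = w_{M(\Xi)}$ holds for all tilings by induction on the flip distance from $\Xi_0$, and therefore
\[
\tilde h(v) \;=\; \max_{\Xi} w_\Xi(\tilde k) \;=\; \max_{M} w_M(\tilde k),
\]
which is exactly the value produced by Speyer's form of the octahedron recurrence from \cite[\S 7]{KT2}. This proves Theorem \ref{octahedron}.
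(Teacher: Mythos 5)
Your argument takes a genuinely different route from the paper's.  The paper verifies $w_\Xi = m_\mu$ (Speyer's linear form) directly, coefficient-by-coefficient: for each vertex $w \in \hexagon_v$ it counts the acute and obtuse angles subtended at $w$ by the tiles of $\Xi$, uses the angle-sum constraint ($a\tfrac{\pi}{3} + o\tfrac{2\pi}{3}$ equals $2\pi$, $\pi$, or $2\pi/3$ depending on whether $w$ is interior, on a boundary edge, or one of the vertices $B,D,E,F$), and matches the resulting coefficient $\tfrac{1}{3}(a-o)$ (with small corrections for equatorial and corner cases) against Speyer's formula $2-o$ or $1-o$.  This holds for an arbitrary tiling $\Xi$ at once, so no connectivity statement is needed.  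Your flip-induction approach — one base case plus invariance under hexagon flips and equatorial swaps — is a legitimate alternative and arguably more conceptual, since it reduces the global identity to two local computations; but it buys this at the cost of needing flip-connectivity of the set of lozenge-and-border-triangle tilings, which you assert rather than prove.  Plain lozenge tilings of a simply connected region are indeed flip-connected, but the object here is not a plain lozenge tiling: it carries the extra degree of freedom of an up/down choice of border triangle at each equatorial edge.  You acknowledge the equatorial flip, but you do not argue that hexagon flips together with equatorial swaps connect the full configuration space.  The cleanest way to close this gap is precisely via the bijection with perfect matchings of $G_v$: a connected planar bipartite graph has all its perfect matchings connected under face-rotations, and the faces of $G_v$ are hexagons and rhombi corresponding exactly to your two flip types.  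If you make that step explicit (rather than invoking lozenge-tiling connectivity, which doesn't directly apply), and actually carry out the base-case telescoping and the two local flip computations you currently only sketch, the proof would be complete.
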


\begin{proof}  This is basically a matter of comparing notations with the Speyer formula in \cite[\S 7]{KT2} and performing some routine calculations; we do so in Appendix \ref{octahedron-app}.
\end{proof}

As a consequence of this theorem and the results in \cite{KT2}, the octahedron recurrence \eqref{octahedron-recur} that we have just defined is indeed a volume-preserving\footnote{In \cite{KT2} the stronger assertion that this recurrence is a bijection between integer lattice points is established, but the volume-preserving nature of the bijection then follows by a standard rescaling and limiting argument to pass from the discrete to the continuous setting.} bijection between the polytopes in \eqref{octahedron-recur}.
 
In our analysis, the contribution of the red lozenges is inconvenient to work with directly (these lozenges cross the ``creases'' in Figure \ref{fig:excavated}, and are thus sensitive to the choice of $\gamma$).  However, it turns out that they can essentially be replaced with the contribution of the blue lozenges:

\begin{lemma}[Replacing red lozenges with blue]  Let $v$ be an interior point of $\{0,\dots,n\}^2$, and let $\Xi$ be a lozenge tiling of $\hexagon_v = ABCDEF$.  Then one has the identity
\begin{equation}\label{xio}
\sum_{\edge \in \Xi \text{, red}} \weight(\edge) - \sum_{\edge \in \Xi \text{, blue}} \weight(\edge)
= \frac{1}{3} (-\tilde k(A) + \tilde k(B) - \tilde k(C) + \tilde k(D) - \tilde k(E) + \tilde k(F)).
\end{equation}
\end{lemma}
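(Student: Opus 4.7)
The plan is to prove \eqref{xio} in two steps. First, show that the left-hand side (call it $L(\Xi)$) is tiling-invariant, i.e., depends only on the restriction of $\tilde{k}$ to the boundary vertices of $\hexagon_v$. Second, verify the identity on the standard tiling $\Xi_0$. Since the right-hand side is manifestly tiling-independent, these two steps combine to prove \eqref{xio} for every tiling $\Xi$.

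Tiling-independence will follow from the classical fact that any two lozenge-and-border-triangle tilings of $\hexagon_v$ are connected by a finite sequence of local moves: (a) ordinary \emph{flips} which re-tile a unit hexagonal sub-region lying entirely in $U$ or entirely in $U'$ by swapping its three lozenges (one each of types (i), (ii), (iii), and hence one blue, one red, one green under the color convention in that half) for the three lozenges of the opposite orientation; and (b) \emph{border moves} which swap the border triangle attached to a given border edge from the upward to the downward side, together with a re-tiling of the two lozenges adjacent to that edge. For an ordinary flip in $U$ around an interior vertex $p$ with cyclic neighbors $v_1, \dots, v_6$, direct computation from the definitions shows that the pre-flip tiling has a blue type-(i) lozenge on $\{v_3, p, v_1, v_2\}$, a red type-(ii) lozenge on $\{v_5, p, v_3, v_4\}$, and a green type-(iii) lozenge (whose weight does not enter $L$); the red-minus-blue contribution evaluates to $\tfrac{1}{3}(\tilde{k}(v_2) + \tilde{k}(v_5) - \tilde{k}(v_1) - \tilde{k}(v_4))$, with the central $\tilde{k}(p)$ cancelling. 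A symmetric computation for the post-flip tiling (which places the blue, red, and green labels onto the other three lozenges) yields the same expression, so $L$ is unchanged. Flips in $U'$ and border moves are handled by analogous short case analyses.

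For Step 2, the standard tiling $\Xi_0$ has all of its blue lozenges lying in the trapezoid $ABEF \cap U$ (all of type (i), arranged in a brick-wall pattern), and all of its red lozenges lying in $BCDE \cap U'$ (all of type (ii), again in a brick-wall pattern). The sum $\sum \weight(\edge)$ over a pure brick-wall-tiled parallelogram telescopes row-by-row and column-by-column to $\tfrac{1}{3}$ times the alternating sum of its four corner values. Identifying the corners of the blue parallelogram with $A, B, E, F$ and of the red parallelogram with $B, C, D, E$, and then forming $\sum_{\mathrm{red}} \weight(\edge) - \sum_{\mathrm{blue}} \weight(\edge)$, yields exactly $\tfrac{1}{3}(-\tilde{k}(A) + \tilde{k}(B) - \tilde{k}(C) + \tilde{k}(D) - \tilde{k}(E) + \tilde{k}(F))$, matching the right-hand side of \eqref{xio}.

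The main obstacle is expected to be the border-move case in Step 1: border triangles have asymmetric weight formulas (only two of the three vertices enter), so one must check carefully that the red-minus-blue change in the weights of the adjacent lozenges is fully compensated by the change in the border-triangle weights under such a move. A secondary technical point is the corner-identification in Step 2: one must verify that the brick-wall parallelograms in $\Xi_0$ are genuinely pure tilings of a single type, and that their corners really do match $\{A,B,E,F\}$ and $\{B,C,D,E\}$ cleanly, without additional contributions from the border-triangle region separating each parallelogram from the equator.
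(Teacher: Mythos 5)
Your approach is genuinely different from the paper's. The paper proves \eqref{xio} directly via a ``discrete Stokes' theorem'': it introduces an auxiliary weight $\weight^*$ on each unit triangle of $\hexagon_v$, defined so that its sum over the two halves of a red lozenge gives $+\weight$, over a blue lozenge gives $-\weight$, over a green lozenge gives $0$, and over the border triangles telescopes to $\frac{1}{3}(\tilde k(D)-\tilde k(A))$; the total over all unit triangles in $\hexagon_v$ then telescopes to a boundary term independent of the tiling. This gives the identity for every tiling in one stroke, with no need for any connectivity input. Your plan instead establishes tiling-invariance of $L(\Xi)$ via flip-connectedness of the (modified) tiling graph and then checks the base case $\Xi_0$. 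That is a legitimate alternative route, but as written it has real gaps.

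First, the border-move invariance is flagged as ``the main obstacle'' but not carried out, and it is not a routine mirror of the flip calculation: the border-triangle weight $\weight(\Delta)=\frac{1}{3}(\tilde k(A)-\tilde k(B))$ involves only two of the three vertices and is asymmetric between upward and downward triangles, and the two lozenges adjacent to a border edge change \emph{type} (and hence, depending on which side of the equator they sit, color) under the move. Until this case is actually verified, Step 1 is not complete. Second, you assert that any two lozenge-and-border-triangle tilings of $\hexagon_v$ are connected by flips and border moves. Flip-connectedness is classical for ordinary lozenge tilings of a hexagon, but this tiling class is nonstandard (the equator forces border triangles), so the connectedness statement needs either a citation or a short proof.

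Third, and more concretely, the corner identification in Step 2 is wrong. The trapezoids $ABEF$ and $BCDE$ each straddle the equator. In the standard tiling the blue lozenges fill only $ABEF\cap U$, which (writing $O$ for the intersection of the diagonal $BE$ with the equator $AD$, as in Figure~\ref{fig:standard}) is the parallelogram $AOEF$, not $ABEF$; likewise the red lozenges fill $BCDO$, not $BCDE$. So your telescoping should produce corner terms at $A,O,E,F$ and at $B,C,D,O$, and the extra $\tilde k(O)$ contributions must cancel between the red and blue sums. As written the computation lands on the right-hand side of \eqref{xio} only because the wrong corners were used; once corrected, the cancellation of the $O$ terms has to be exhibited, which is the step doing the real work in your Step 2.
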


\begin{proof}  The following argument can be viewed as implementing a discrete analogue of Stokes' theorem.
Define an auxiliary weight $\weight^*(PQR)$ on unit triangles $PQR$ in $U$ or $U'$ that are either upward pointing
$$ (P,Q,R) = ((i,j), (i+1,j), (i+1,j-1))$$
or downward pointing
$$ (P,Q,R) = ((i,j), (i,j-1), (i+1,j-1))$$
by defining 
$$\weight^*(PQR) \coloneqq \frac{1}{3} (\tilde k(R) - \tilde k(P))$$
for upward-pointing triangles in $U$ or downward-pointing triangles in $U'$, and
$$\weight^*(PQR) \coloneqq \frac{1}{3} (\tilde k(P) - \tilde k(R))$$
for downward-pointing triangles in $U$, or upward-pointing triangles in $U'$. One then observes:
\begin{itemize}
\item The sum of the auxiliary weights of the two component triangles of a red lozenge is equal to the weight of that lozenge.
\item The sum of the auxiliary weights of the two component triangles of a blue lozenge is equal to the negative of the weight of that lozenge.
\item The sum of the auxiliary weights of the two component triangles of a green lozenge is zero.
\item The sum of the auxiliary weights of all the boundary triangles in the tiling telescopes to $\frac{1}{3}(\tilde k(D) - \tilde k(A))$.
\end{itemize}
Summing, we conclude that the sum of the auxiliary weights of all the triangles in $\hexagon_v$ is equal to the left-hand side of \eqref{xio}
plus $\frac{1}{3}(\tilde k(D) - \tilde k(A))$.  On the other hand, this sum also telescopes to equal $\frac{1}{3}(\tilde k(F) - \tilde k(E) + 2\tilde k(D) - 2\tilde k(A) + \tilde k(B) - \tilde k(C))$.  The claim follows.
\end{proof}

In view of this lemma, we can also write the weight form $w_\Xi$ in \eqref{weight-form} in a red lozenge-avoiding form as
\begin{equation}\label{alt-weight}
w_\Xi \coloneqq 2 \sum_{\edge \in \Xi\text{, blue}} \weight(\edge) + \sum_{\edge \in \Xi\text{, green}} \weight(\edge) 
+ \sum_{\Delta \in \Xi} \weight(\Delta) + \weight'(\hexagon_v)
\end{equation}
where the modified weight $\weight'(\hexagon_v)$ of the hexagon $\hexagon_v$ is defined by the formula
$$
 \weight'(\hexagon_v) \coloneqq \frac{1}{3} (-\tilde k(A) + 2\tilde k(B) + 2\tilde k(F)).
$$
Now we specialize to the situation where $\gamma$ has large gaps, in the sense that
$$ \min_{1 \leq i < n-1} \gamma_i - \gamma_{i+1} > (\lambda_1-\lambda_n) + (\mu_1 - \mu_n).$$
It is a routine matter to use rhombus concavity to obtain
$$ \min_{1 \leq i < n-1} \mu_i - \mu_{i+1} \geq \min_{1 \leq i < n-1} \gamma_i - \gamma_{i+1} - (\lambda_1-\lambda_n) > \mu_1 - \mu_n$$
while from the Weyl identities one also has
$$ \min_{1 \leq i < n-1} \gamma_i - \gamma_{i+1} > \nu_1-\nu_n.$$
If one then sets $\sigma = \gamma + b$ and $\pi = \gamma + a$ for some $a,b \in \R^n$, then we conclude from Proposition \ref{gt-rem} that we have identifications
$$ 
\HIVE_{\gamma \boxplus \lambda \rel \sigma} \equiv \GT_{\diag(\lambda) \rel b}; \quad
\HIVE_{\sigma \boxplus \mu \rel \pi} \equiv \GT_{\diag(\mu) \rel a-b}; \quad \HIVE_{\gamma \boxplus \nu \rel \pi} \equiv \GT_{\diag(\nu) \rel a}.$$
Because of this, the octahedron recurrence can also be viewed (by abuse of notation) as a map
$$
 \oct \colon \bigcup_b \GT_{\diag(\la) \rel b} \times \GT_{\diag(\mu) \rel a-b} \to \bigcup_\nu \AHIVE_{\diag(\la \boxplus \mu \rel \nu) \rel a}
$$
as asserted in \eqref{oct-map}.  A priori, this map could depend on the choice of $\gamma$.  However, direct inspection of the definitions shows that the weight of a blue or green lozenge, or of a boundary triangle, does not depend on $\gamma$ (all the shifts by partial sums of $\gamma$ cancel each other out).  Furthermore, the modified weight $\weight'(\hexagon_v)$ of a hexagon $\hexagon_v$ can be verified to also be independent of $\gamma$ (and in fact only depends on the boundary data $\lambda, \mu$) when $v$ lies in $T$, and is equal to $-\sum_{k=n+i-j+1}^n \gamma_k$ plus a quantity independent of $\gamma$ when $v = (i,j)$ lies in $T$.  From this one can check that the map \eqref{oct-map} does not in fact depend on $\gamma$.

By the preceding discussion, this map $\oct$ is volume-preserving, and hence also extends by the Fubini--Tonelli theorem to a volume-preserving bijection
$$
 \oct \colon \GT_{\diag(\la) \rel \ast} \times \GT_{\diag(\mu) \rel \ast} \to \AHIVE_{\diag(\la \boxplus \mu \rel \ast) \rel \ast}
$$
for any $\la, \mu \in \Spec^\circ$.  By a further application of Fubini--Tonelli, the probability measure on $\AHIVE_{\diag(\ast \boxplus \ast \rel \ast) \rel \ast}$ whose density function equals \eqref{css} on the slices \eqref{ahive-slice}, is the pushforward under $\oct$ of the probability measure on $\GT_{\diag(\ast) \rel \ast} \times \GT_{\diag(\ast) \rel \ast}$ whose density function equals \eqref{css} on the slices
$\GT_{\diag(\la) \rel \ast} \times \GT_{\diag(\mu) \rel \ast}$.  By Proposition \ref{gt-rem}(iii), this latter probability distribution is the law of the pair $(g,g')$ of Gelfand--Tsetlin patterns generated by applying the minor process to random Hermitian matrices $A,B$ with 
$\frac{A}{\sqrt{\sigma_\lambda^2 n}}, \frac{B}{\sqrt{\sigma_\mu^2 n}}$ drawn independently from the GUE ensemble as per that proposition.  The proof of Theorem \ref{mainthm} is thus reduced to

\begin{proposition}[Reduction to the minor process]\label{w-minor}  Let $\sigma_\lambda, \sigma_\mu > 0$ be fixed, and let $A,B$ with 
$\frac{A}{\sqrt{\sigma_\lambda^2 n}}, \frac{B}{\sqrt{\sigma_\mu^2 n}}$ be drawn independently from the GUE ensemble, and let $(g,g')$ be the resulting Gelfand--Tsetlin patterns.  Then for any $v \in T$, we have
\begin{equation}\label{vare}
 \Var \max_{\Xi \text{ tiles } \hexagon_v} 2 \sum_{\edge \in \Xi\text{, blue}} \weight(\edge) + \sum_{\edge \in \Xi\text{, green}} \weight(\edge) 
+ \sum_{\Delta \in \Xi} \weight(\Delta) + \weight'(\hexagon_v) = o(n^4)
\end{equation}
where we identify $(g,g')$ with a pair of hives $(k,k')$ using a large gaps tuple $\gamma$ as indicated above (noting from \eqref{alt-weight} that the choice of $\gamma$ does not affect the value of $w_\Xi(g,g')$).
\end{proposition}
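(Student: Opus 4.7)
The plan is to apply the log-concave Poincar\'e inequality of Lemma \ref{lem:17} to the finite family $\mathcal W_v = \{w_\Xi : \Xi \text{ tiles } \hexagon_v\}$ of affine functionals on $\R^{n(n+1)}$. Since the density \eqref{css}, pulled back through the octahedron recurrence to the $(g,g')$-coordinates (equivalently, the joint law of the GUE minor processes of the two scaled matrices $A,B$), is log-concave, Lemma \ref{lem:17} in dimension $d = n(n+1)$ gives
$$\Var \max_\Xi w_\Xi \ll \sup_\Xi \Var w_\Xi \cdot \log n.$$
It therefore suffices to prove $\Var w_\Xi = O(n^{4-c})$ for some absolute $c>0$ uniformly in $\Xi$ and $v$; the logarithmic loss is absorbed by the polynomial gain. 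Since $\weight'(\hexagon_v)$ is common to all tilings, I would split $w_\Xi = \weight'(\hexagon_v) + (w_\Xi - \weight'(\hexagon_v))$ and treat the two pieces separately. The hexagon weight depends only on partial sums of $\lambda,\mu$, so classical variance bounds for GUE linear statistics give $\Var \weight'(\hexagon_v) = O(n^2 \log n) = o(n^4)$.

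Next I would express the tiling-dependent remainder as an explicit linear statistic in the Gelfand--Tsetlin entries. Evaluating every weight using \eqref{tk-1}, \eqref{tk-2} and the hive formula \eqref{hij}, a direct computation (which simultaneously verifies the $\gamma$-cancellation asserted in the text) reduces each blue or green lozenge weight to a single interlacing difference $\pm\tfrac13(\lambda_{j,k+1}-\lambda_{j,k})$ or $\pm\tfrac13(\mu_{j,k+1}-\mu_{j,k})$, and each border-triangle weight to $\pm\tfrac13$ times a single extreme (top or bottom) eigenvalue of a minor of $A$ or $B$. Thus $w_\Xi - \weight'(\hexagon_v)$ is an explicit linear combination of $O(n^2)$ interlacing gaps drawn from the two independent minor processes, plus $O(n)$ individual extreme minor eigenvalues from the border.

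The crux is the variance bound for this linear combination. I would expand it as a double sum of covariances, split by matrix ($A$ and $B$ being independent). For pairs of interlacing differences whose indices lie in the bulk of their respective minors, the Cipolloni--Erd\H{o}s--Schr\"oder covariance bound \cite{Cip} for eigenvalue gaps of GUE furnishes polynomial decay in the index separation. The CES estimate is stated for a single GUE matrix, so I would extend it to the joint law across different minors by invoking the determinantal description of the GUE minor process (Eynard--Mehta / Johansson, with explicit Hermite correlation kernel): moving between neighboring minors perturbs the kernel only mildly, and the decay persists. Summed over bulk pairs, this produces a contribution of order $n^{4-c}$ to $\Var(w_\Xi - \weight'(\hexagon_v))$ for some $c>0$.

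The main technical obstacle, already flagged in the methods subsection, is the edge contribution, where \cite{Cip} is silent. I would handle it by crude bounds: only an $O(n^{2-\delta})$ fraction of lozenges in $\hexagon_v$ have a vertex within a microscopic neighborhood of the spectral edge of the relevant minor, and on this small set the trivial variance bound $O(n^{o(1)})$ on individual interlacing differences (from edge rigidity for GUE minors) already suffices to produce an edge contribution of $O(n^{4-c'})$ for some $c'>0$. The $O(n)$ border-triangle contributions are at the edge by construction, but each has variance $O(n^{o(1)})$ by Tracy--Widom-type fluctuations, so their total variance is $o(n^4)$. Combining the bulk and edge estimates yields $\Var w_\Xi = O(n^{4-\min(c,c')})$ uniformly in $\Xi$ and $v$, which together with Lemma \ref{lem:17} delivers \eqref{vare}.
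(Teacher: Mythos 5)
Your high-level plan shares some components with the paper (log-concave Poincar\'e via Lemma \ref{lem:17}, CES covariance decay for the bulk, edge rigidity for the rest), but the order of operations and the claimed quantitative strength are crucially different, and as written the proposal has a genuine gap.

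\textbf{The order of operations is the crux.} You apply Lemma \ref{lem:17} first, to the full max over tilings of the complete weight form, and then try to show $\Var w_\Xi = O(n^{4-c})$ uniformly in $\Xi$. This last claim is unattainable with the stated tools, and the paper explicitly acknowledges why: the CES covariance decay \cite{Cip} is only available in the macroscopic bulk (indices in $[\eps n, (1-\eps)n]$ for fixed $\eps$, with implied constants depending on $\eps$), while the edge contribution — the lozenges within distance $\eps n$ of $\partial U \cup \partial U'$ — is controlled only by eigenvalue rigidity, which gives a total weight $O(\eps^{1/3} n^2)$ and hence a variance contribution $O(\eps^{2/3} n^4)$. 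This is $o(n^4)$ as $\eps \to 0$ slowly, but it is \emph{not} $O(n^{4-c})$ for any fixed $c>0$: you cannot send $\eps \to 0$ polynomially fast because the bulk estimate degenerates as $\eps\to 0$. Your proposal to get $O(n^{4-c'})$ for the edge by restricting to a ``microscopic neighborhood'' of the spectral edge does not close the gap, because it leaves an intermediate (mesoscopic) region where neither CES nor your crude counting applies, and where the number of lozenges is $\Theta(\eps n^2)$, not $O(n^{2-\delta})$. Consequently $\Var w_\Xi$ does not gain a power of $n$ over the trivial bound, and the $\log n$ from Lemma \ref{lem:17} cannot be absorbed in your scheme. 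The paper avoids this precisely by \emph{removing} the edge lozenges (and replacing $\weight(\Delta)$ and $\weight'(\hexagon_v)$ by their deterministic expectations) \emph{before} invoking Lemma \ref{lem:17}, so that the logarithmic loss only ever meets the bulk term, which does enjoy a genuine polynomial savings.

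\textbf{The bulk covariance argument is also underspecified.} You propose to bound, directly, the covariance of interlacing gaps $\lambda_{j,k+1}-\lambda_{j,k}$ across different minor levels by ``extending'' CES via the Eynard--Mehta kernel of the minor process. This is vague and likely substantially harder than what the paper does. The paper instead uses the triangle inequality to reduce to a single-row statistic $X_S = \sum_{j\in S}\lambda_j - \lambda_{j,n-1}$ (exploiting that each minor is itself a rescaled GUE, so only $k=n-1$ need be treated), then conditions on $\lambda$ and applies the law of total variance. The conditional pieces are handled by two quite different mechanisms: an exact determinantal/contour-integral formula for the minor process (Proposition \ref{moment-formulae}) gives the conditional covariances, while the unconditional fluctuation of $\E(X_S|\lambda)$ is controlled by a \emph{weighted} Poincar\'e inequality (Proposition \ref{log-concave-poin-weight}) in which CES enters only through the operator norm of the bulk gap-covariance matrix $M_{\mathtt{bulk}}$, together with the gradient bound \eqref{e-bound}. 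None of this conditioning structure appears in your proposal, and it is not a cosmetic difference: it is what makes the CES input usable without having to prove multi-level joint covariance decay.

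In short: the pieces you cite are the right ones, but the proposal's claimed $O(n^{4-c})$ bound on $\Var w_\Xi$ is not provable from them, and the conditional/weighted-Poincar\'e machinery the paper uses to organize the bulk estimate is missing.
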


The proof of this proposition will occupy most of the remaining sections of the paper.

\section{Using eigenvalue rigidity to remove edge contributions}

Let the notation and hypotheses be as in Proposition \ref{w-minor}.  We now use known eigenvalue rigidity bounds for GUE matrices to remove some ``edge'' contributions to the linear forms $w_\Xi$ as being negligible.

We begin with a crude estimate.  If $\lambda$ denotes the eigenvalues of $A$, then we can easily obtain the bound
$$ \E \sum_{i=1}^4 \lambda_i^4 = \E \mathrm{tr} A^4 \ll n^{O(1)}$$
(in fact far more precise asymptotics are possible, but we will not need them here).  In particular $\E |\lambda|^4 \ll n^{O(1)}$.  Similarly, if $\mu$ denotes the eigenvalues of $B$, then $\E |\mu|^4 \ll n^{O(1)}$.  In particular, by the interlacing property, all the components of $g,g'$ have a fourth moment of polynomial size $O(n^{O(1)})$.  By H\"older's inequality, this has the following consequence: if $E$ is an event of superpolynomially small probability (i.e., $\P E = O(n^{-C})$ for any fixed $C$), then any reasonable statistic $X$ of $g,g'$ (such as a component of the random variable appearing in \eqref{vare}) will have negligible second moment on $E$, in the sense that $\E |X|^2 \one_E \ll n^{-C}$ for any fixed $C$ (where $\one_E$ of course denotes the indicator function of $E$).  As a consequence, we may freely remove such events from our analysis, and restrict attention to events that hold with overwhelming probability (probability $1-O(n^{-C})$ for any $C>0$).

We now recall an eigenvalue rigidity estimate:

\begin{lemma}[Eigenvalue rigidity]\lab{lem:TV}
  Let $A$ be a matrix with $A/\sqrt{n}$ having the distribution of GUE.  Then for
  any $1 \leq i \leq n$ we have
$$\p(n^{-1/3} \min(i, n - i + 1)^{1/3}|\la_i - \sqrt{n} \gamma_i| \geq T )\ll  n^{O(1)} exp(-cT^c)$$
for any $T > 0$ and some absolute constant $c>0$, where the \emph{classical location} $\gamma_i$ is the value predicted by the semicircular law:
$$ \int_{-\infty}^{\gamma_i} \frac{1}{2\pi} (4-x^2)^{1/2}\ dx = \frac{i}{n}.$$
In particular,
$$ \la_i,  \E \la_i = \sqrt{n} \gamma_i + O( n^{1/3} \min(i, n - i + 1)^{-1/3} \log^{O(1)} n)
$$
with overwhelming probability. 
\end{lemma}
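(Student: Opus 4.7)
The plan is to reduce the claim to a standard eigenvalue rigidity estimate for the GUE ensemble, variants of which have been established in the random matrix theory literature by Erd\H{o}s--Schlein--Yau, Tao--Vu, Erd\H{o}s--Yau--Yin, and Bourgade--Erd\H{o}s--Yau, among others; the precise form stated here can either be cited directly from that literature or reproduced by the sketch below.

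First I would exploit the determinantal structure of the GUE eigenvalue process, whose correlation kernel is an explicit Hermite kernel $K_n$. By the Hough--Krishnapur--Peres--Vir\'ag representation theorem, for any threshold $x \in \R$ the counting random variable
$$ N(x) \coloneqq \#\{ j : \la_j \leq x\}$$
is distributed as a sum of independent Bernoulli variables (with parameters the eigenvalues of $\chi_{(-\infty,x]} K_n \chi_{(-\infty,x]}$), which yields a Chernoff-type bound of the form
$$ \p\bigl( |N(x) - \E N(x)| \geq s \bigr) \ll \exp\!\bigl(-c\, \min(s^2 / \Var N(x),\ s)\bigr).$$
A standard computation using Christoffel--Darboux and Plancherel--Rotach asymptotics gives $\Var N(x) \ll \log n$ uniformly in $x$ (with much better bounds near the edge), together with $\E N(x) = n \int_{-\infty}^{x/\sqrt n} \rho_{sc}(y)\,dy + O(1)$.

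Next I would translate this into a location bound via the tautology $\{\la_i \leq y\} = \{N(y) \geq i\}$. Setting $y = \sqrt n\,\gamma_i \pm T\, n^{1/3}\min(i,n-i+1)^{-1/3}$ and using the square-root vanishing $\rho_{sc}(u) \asymp (2-|u|)^{1/2}$ of the semicircle density near the edge, a direct calculation shows that $|\E N(y) - i|$ grows at least like a positive power of $T$ (of order $T^{3/2}$ in the regime of interest). Combined with the concentration step above, this yields the desired subexponential tail estimate.

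The main obstacle I anticipate is handling the bulk--edge crossover uniformly in $i$: the prefactor $\min(i, n-i+1)^{-1/3}$ is precisely what interpolates between the bulk spacing $\sim n^{-1/2}$ and the edge spacing $\sim n^{-2/3}$, so both the variance bound on $N(x)$ and the displacement-to-counting conversion must be analysed regime by regime. The ``in particular'' statement then follows by setting $T = C \log n$ for sufficiently large $C$ to get the high-probability bound, and invoking the crude polynomial moment bounds established at the start of this section to upgrade this to a bound on $\E \la_i$, since the exceptional event contributes negligibly by H\"older's inequality.
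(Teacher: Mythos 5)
The paper does not actually give a proof here: it simply cites \cite[Corollary 15]{TaoVu-conc}, which establishes eigenvalue rigidity for the much broader class of Wigner matrices via the local semicircle law and moment comparison techniques. Your sketch takes a genuinely different, GUE-specific route: you exploit the determinantal structure of GUE, invoke the Hough--Krishnapur--Peres--Vir\'ag representation of $N(x)$ as a sum of independent Bernoullis to get a Chernoff-type tail for the counting function, bound $\Var N(x)\ll\log n$ via Plancherel--Rotach asymptotics of the Hermite kernel, and then translate to eigenvalue locations through the tautology $\{\la_i\leq y\}=\{N(y)\geq i\}$. This is a legitimate alternative; indeed it is essentially the route Gustavsson took (to get Gaussian fluctuations of bulk eigenvalues) long before the local-law machinery existed. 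What your approach buys is self-containedness and explicitness — everything reduces to asymptotics of one explicit kernel — at the cost of not generalizing beyond determinantal $\beta=2$ ensembles, while the cited Tao--Vu result covers arbitrary Wigner matrices, which is overkill here but saves work.

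Two small caveats worth flagging if you intend to carry out the sketch in full. First, the phrase ``of order $T^{3/2}$'' should not be taken at face value across all regimes of $i$: in the bulk the displacement-to-counting conversion is linear in $T$ (since $\rho_{sc}$ is bounded away from zero), while the $T^{3/2}$ behaviour is the correct scaling only near the spectral edge where $\rho_{sc}$ has square-root vanishing; fortunately the lemma only asserts a stretched exponential $\exp(-cT^c)$ for some $c>0$, so either power suffices and a careful regime-by-regime treatment (as you anticipate) closes this. Second, one must check that the deterministic approximation $\E N(x)=n\int_{-\infty}^{x/\sqrt n}\rho_{sc}+O(1)$ is uniform in $x$ including near and beyond the edge; this is standard for GUE but should be stated. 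Your derivation of the ``in particular'' clause (take $T=C\log n$, then upgrade to an $\E\la_i$ bound using the polynomial moment bound on the exceptional event via H\"older) matches the standard argument and is exactly what the surrounding text of the paper does.
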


\begin{proof} See for instance \cite[Corollary 15]{TaoVu-conc} (in which this statement is established for the broader class of Wigner random matrices).
\end{proof}

From Lemma \ref{lem:TV}, we conclude in particular that
$$ \lambda_n = \E \lambda_n + O(n^{1/3} \log^{O(1)} n) $$
with overwhelming probability.  Since the $k \times k$ minor of a GUE matrix is also a GUE matrix, we similarly have
$$ \lambda_{k,k} = \E \lambda_{k,k} + O(n^{1/3} \log^{O(1)} n)$$
with overwhelming probability for all $1 \leq k \leq n$, where we recall that $\lambda_{1,k} \geq \dots \geq \lambda_{k,k}$ are the eigenvalues of the top left $k \times k$ minor of $A$ (and thus form the $k^{\mathrm{th}}$ row of $g$).  Similarly we have
$$ \mu_{1,k} = \E \mu_{1,k} + O(n^{1/3} \log^{O(1)} n)$$
with overwhelming probability for all $1 \leq k \leq n$.  One could compute these expectations more precisely using the Tracy--Widom law \cite{tracy}, but we will not need to do so here.

Let $\Delta$ be a border triangle associated to a border edge $((i,n-i), (i+1,n-i-1))$ for some $0 \leq i < n$.  By inspecting the definitions, we see that the weight of this triangle is given by the formula
$$ \weight(\Delta) = \frac{2}{3} \mu_{1,n-i}$$
if $\Delta$ is an upward pointing triangle, and
$$ \weight(\Delta) = \frac{1}{3} \lambda_{n-i,n-i}$$
if $\Delta$ is a downward pointing triangle.  We conclude that for any lozenge tiling $\Xi$ of $\hexagon_v$, we have
$$ \sum_{\Delta \in \Xi} \weight(\Delta) = \sum_{\Delta \in \Xi} \E \weight(\Delta) + O( n^{4/3} \log^{O(1)} n )$$
with overwhelming probability.  
By the triangle inequality, the contribution of the error term $O( n^{4/3} \log^{O(1)} n )$ is negligible for the purposes of establishing, so without loss of generality we can replace $\weight(\Delta)$ with $\E \weight(\Delta)$ in \eqref{vare}.

The weight $\weight'(\hexagon_v)$ is a certain linear combination of the eigenvalues $\lambda_i, \mu_j$ with bounded coefficients.  By Lemma \ref{lem:TV}, we conclude that
\begin{align*}
 \weight'(\hexagon_v) &= \E \weight'(\hexagon_v) + O\left( \sum_{i=1}^n n^{1/3} \min(i, n - i + 1)^{-1/3} \log^{O(1)} n \right)\\
&= \E \weight'(\hexagon_v) + O( n \log^{O(1)} n) .
\end{align*}
Again, the contribution of the $O( n \log^{O(1)} n )$ error is acceptable, so we may also replace $\weight'(\hexagon_v)$ by $\E \weight'(\hexagon_v)$ in \eqref{vare}.

It remains to control the contribution of the weights of the blue and green lozenges.  In the upper region $U$, a blue lozenge with vertices
$$ ((i,j), (i+1,j-1), (i+2,j-1), (i+1,j))$$
can be computed to have weight
$$ \frac{1}{3} ( \mu_{i+j+1-n,j-1} - \mu_{i+j+1-n,j} )$$
while a green lozenge with vertices
$$ ((i,j), (i+1,j), (i+1,j+1), (i,j+1))$$
can be computed to have weight
$$ \frac{1}{3} ( \mu_{i+j+2-n,j+1} - \mu_{i+j+1-n,j} ).$$
Similarly, in the lower region $U'$, a blue lozenge with vertices
$$ ((i,j), (i,j+1), (i-1,j+2), (i-1,j+1))$$
can be computed to have weight
\begin{equation}\label{j3}
 \frac{1}{3} ( \lambda_{j+2,n-i+1} - \lambda_{j+1,n-i} )
\end{equation}
while a green lozenge with vertices
$$ ((i,j), (i+1,j), (i+1,j+1), (i,j+1))$$
can be computed to have weight
$$ \frac{1}{3} ( \lambda_{j+1,n-i} - \lambda_{j+1,n-i+1} ).$$
In particular, by the interlacing inequalities \eqref{interlacing}, all these weights are non-positive.

Let $\eps>0$ be a small parameter, and let $U_\eps$ denote the portion of $U$ that lies at Euclidean distance at least $\eps n$ from the boundary of $U$.   Define $U'_\eps$ similarly.  We now claim the estimate
$$ \sum_{\edge \not \subset U_\eps \cup U'_\eps} |\weight(\edge)| \ll \eps^{1/3} n^2$$
with overwhelming probability, where the sum is over all blue or green lozenges in $U$ or $U'$ that are not contained in $U_\eps$ or $U'_\eps$.  Indeed, by the preceding calculations, the preceding sum telescopes to be bounded by a sum 
\begin{itemize}
\item[(i)] $O(\eps n)$ expressions of the form $\lambda_{k,1} - \lambda_{k,k}$ or $\mu_{k,1} - \mu_{k,k}$ for various $1 \leq k \leq n$;
\item[(ii)] $O(n)$ expressions of the form $\lambda_{k,1} - \lambda_{k,i}$ or $\mu_{k,1} - \mu_{k,i}$ for various $1 \leq i \leq k \leq n$ with $i = O(\eps n)$; and
\item[(iii)] $O(n)$ expressions of the form $\lambda_{k,i} - \lambda_{k,k}$ or $\mu_{k,i} - \mu_{k,k}$ for various $1 \leq i \leq k \leq n$ with $k-i = O(\eps n)$.
\end{itemize}
By eigenvalue rigidity (Lemma \ref{lem:TV}), all the expressions in (i) are of size $O(n)$ with overwhelming probability, while all the expressions in (ii), (iii) are of size $O( \eps^{1/3} n )$ with overwhelming probability.  The claim follows.

By the triangle inequality, and by sending $\eps$ slowly to zero, it now suffices to establish the bound
$$
 \Var \max_{\Xi \text{ tiles } \hexagon_v} 2 \sum_{\edge \in \Xi\text{, blue}: \edge \subset U_\eps \cup U'_\eps } \weight(\edge) + \sum_{\edge \in \Xi\text{, green}: \edge \subset U_\eps \cup U'_\eps} \weight(\edge) 
+ \sum_{\Delta \in \Xi} \E \weight(\Delta) + \E \weight'(\hexagon_v) = o(n^4)
$$
as $n \to \infty$ for any fixed $\eps>0$.  Applying Lemma \ref{lem:17}, and noting that the deterministic quantities
$\E \weight(\Delta)$, $\E \weight'(\hexagon_v)$ have zero variance, it will thus suffice to establish the bound
$$
 \Var\left( 2 \sum_{\edge \in \Xi\text{, blue}: \edge \subset U_\eps \cup U'_\eps} \weight(\edge) + \sum_{\edge \in \Xi\text{, green}: \edge \subset U_\eps \cup U'_\eps} \weight(\edge)\right) = O(n^{4-c+o(1)})
$$
for each individual lozenge tiling $\Xi$ and some constant $c>0$, assuming $n$ is sufficiently large depending on $\eps$, and where we now allow implied constants in the $O()$ notation to depend on $\eps$.  

Henceforth we fix $\eps>0$ and assume $n$ sufficiently large depending on $\eps$. By the triangle inequality, it thus suffices to establish the bound
\begin{equation}\label{var-blue}
 \Var \sum_{\edge \in \Xi\text{, blue}: \edge \subset U'_\eps} \weight(\edge) = O(n^{4-c+o(1)})
\end{equation}
and similarly with blue replaced by green, or $U'_\eps$ replaced by $U_\eps$, or both.

We focus on establishing \eqref{var-blue}, as the other three cases are proven similarly.  By \eqref{j3}, it suffices to establish the bound
$$ \Var \sum_{(j,k) \in \Omega} \lambda_{j,k+1} - \lambda_{j,k} = O(n^{4-c+o(1)})$$
whenever $\Omega$ is a collection of tuples of integers $1 \leq j \leq k \leq n$ with $j, k-j, n-k \gg \eps n$.  By the triangle inequality, it suffices to show that
$$ \Var \sum_{j \in S_k} \lambda_{j,k+1} - \lambda_{j,k} = O(n^{2-c+o(1)})$$
for each $\eps n \ll k \leq n-1$, where $S_k$ is some subset of the bulk region $\{ 1 \leq j \leq k: j, k-j \gg \eps n \}$.  Since the minor of a GUE matrix is a rescaled version of a GUE matrix, it suffices to establish this claim for the case $k=n-1$, that is to say (after adjusting $\eps$ slightly) to show that
$$ \Var X_S = O(n^{2-c+o(1)})$$
for an arbitrary subset $S$ of $\{ 2\eps n \leq j \leq (1-2\eps) n\}$,
where $X_S$ denotes the random variable
$$ X_S \coloneqq \sum_{j \in S} \lambda_{j} - \lambda_{j,n-1}.$$

It is convenient to exclude a small exceptional set to keep the eigenvalues $\lambda_j$ somewhat under control.  From Lemma \ref{lem:TV} we already know that there is a constant $C_0$ such that
\begin{equation}\label{la-1}
 |\la_j - \sigma_\lambda \gamma_j n^{1/2}| \leq n^{1/3} \min(j, n - j + 1)^{-1/3} \log^{C_0} n
\end{equation}
for all $1 \leq j \leq n$ with overwhelming probability.  From the Wegner estimate (see \cite[Theorem 3.5]{ESY}) and enlarging $C_0$ if needed, we also see that
\begin{equation}\label{la-2}
|\la_{j+1} - \lambda_j| \geq \exp(-\log^{C_0} n)
\end{equation}
with overwhelming probability for all $\eps n \leq j \leq (1-\eps) n$.  Thus, if we let $E$ denote the event that \eqref{la-1}, \eqref{la-2} both hold for all $\eps n \leq j \leq (1-\eps) n$, then $E$ holds with overwhelming probability; for future reference we also note the constraints \eqref{la-1}, \eqref{la-2} defining $E$ are restricting $\lambda$ to a certain convex subset of $\Spec$.  It suffices to show that
$$ \Var(X_S| E) = O(n^{2-c+o(1)}).$$
We split this by conditioning on the spectrum $\lambda$ of $A$.  
By the law of total variance (noting that the event $E$ is measurable with respect to $\lambda$), it suffices to establish the bounds
\begin{equation}\label{first-var}
\Var(\E( X_S | \lambda) | E) = O(n^{2-c+o(1)})
\end{equation}
and
\begin{equation}\label{second-var}
\E( \Var( X_S | \lambda) | E) = O(n^{2-c+o(1)}).
\end{equation}

To prove \eqref{second-var}, we expand out the left-hand side as
$$ \sum_{i,j \in S} \E( \cov( \lambda_{i} - \lambda_{i,n-1}, \lambda_{j} - \lambda_{j,n-1}|\lambda) | E)$$
where we use $\cov(X,Y) \coloneqq \E(XY) - (\E X)(\E Y)$ to denote the covariance between two random variables $X,Y$.
In the next section we will establish the bound
\begin{equation}\label{isep}
\E( \cov( \lambda_{i} - \lambda_{i,n-1}, \lambda_{j} - \lambda_{j,n-1}|\lambda) | E) \ll \frac{n^{o(1)}}{(1+|i-j|)^2}
\end{equation}
for all $2 \eps n \leq i,j \leq (1-2\eps) n$, which certainly implies \eqref{second-var}.  

In the remainder of this section we will reduce \eqref{first-var} to an estimate somewhat similar to \eqref{isep}, and then we will prove both remaining required inequalities in the next section.

Since the event $E$ is restricting $\lambda$ to a convex set in $\R^d$, so the probability distribution function of $\lambda$ is still log-concave after conditioning to $E$.  Thus Poincar\'e estimates such as Proposition \ref{log-concave-poin} become available.  As it turns out, a direct application of this proposition gives unfavorable estimates, basically because of long-range correlations between $\lambda_i$ and $\lambda_j$ make the operator norm of the inertia matrix large, and also because the known correlation decay estimates are currently only available in the bulk.  To resolve this, we do not use the standard basis $e_1,\dots,e_d$ of $\R^d$, but instead the following basis consisting of three groups:
\begin{itemize}
\item The vector $e_1+\dots+e_d$.
\item The vectors $e_{i+1}-e_i$ for $i$ in the bulk region $\mathtt{bulk} \coloneqq \{ i: \eps n \leq i < (1-\eps) n \}$.
\item The vectors $e_{i+1}-e_i$ for $i$ in the edge region $\mathtt{edge} \coloneqq \{ i: 1 \leq i < \eps n \hbox{ or } (1-\eps) n \leq i < n \}$. 
\end{itemize}
The point is that $\E(X_S|\lambda)$ has different behavior in each of the three groups of vectors.  In the direction $e_1+\dots+e_d$, the function $\E(X_S|\lambda)$ is in fact constant.  This is because once one conditions on $\lambda$, the random variable $\lambda_{j,n-1}$ has the distribution of the $j^{\mathrm{th}}$ largest eigenvalue of the top left $n-1 \times n-1$ minor of a Hermitian matrix chosen uniformly at random amongst all matrices with eigenvalue $\lambda$.  Moving $\lambda$ in the direction $e_1+\dots+e_d$ then amounts to shifting $\lambda_j$ and $\lambda_{j,n-1}$ by the same constant, so the expectation $\E(X_S|\lambda)$ remains unchanged.

As it turns out, $\E(X_S|\lambda)$ is significantly more sensitive to the bulk eigenvalue gaps $\lambda_{i+1}-\lambda_i$ than the edge eigenvalue gaps $\lambda_{j+1}-\lambda_j$.  To exploit this, we apply Proposition \ref{log-concave-poin} with suitable choices of weights (sending the weight on the basis vector $e_1+\dots+e_d$ to infinity) to conclude that
\begin{equation}\label{again}
\begin{split}
\Var(\E( X_S | \lambda) | E) &\ll \E\left( |\nabla_{\mathtt{bulk}} \E( X_S | \lambda) |^2 + n |\nabla_{\mathtt{edge}} \E( X_S | \lambda) |^2 | E\right) \\
&\quad \times \left(\|M_{\mathtt{bulk}}\|_{\mathrm{op}} + n^{-1} \|M_{\mathtt{edge}}\|_{\mathrm{op}}\right) \log n
\end{split}
\end{equation}
where for $\Omega = \mathtt{bulk}, \mathtt{edge}$ one has
$$
 |\nabla_{\Omega} \E( X_S | \lambda) |^2 \coloneqq \sum_{i \in \Omega} |(\partial_{\lambda_{i+1}} - \partial_{\lambda_i}) \E( X_S | \lambda) |^2 $$
and $M_{\Omega}$ is the covariance matrix with entries
$$ \cov( \lambda_{i+1} - \lambda_i, \lambda_{j+1} - \lambda_j | E )$$
for $i,j \in \Omega$.

We now estimate the entries of the covariance matrices $M_{\mathtt{bulk}}$, $M_{\mathtt{edge}}$.  For the edge region, we use \eqref{la-1} to conclude that $\lambda_{i+1} - \lambda_i = O( n^{1/3+o(1)})$ for all $i \in \mathtt{edge}$, hence all entries of $M_{\mathtt{edge}}$ have size $O( n^{2/3+o(1)})$.  By Schur's test, we conclude that
$$ \| M_{\mathtt{edge}}\|_{\mathrm{op}} \ll n^{5/3+o(1)}.$$
In the bulk region $\mathtt{bulk}$, the same argument gives the bound $\lambda_{i+1} - \lambda_i = O( n^{o(1)})$, hence all entries of $M_{\mathtt{bulk}}$ have size $O(n^{o(1)})$.  Schur's test then gives the bound
$$ \| M_{\mathtt{bulk}}\|_{\mathrm{op}} \ll n^{1+o(1)},$$
but this is not quite strong enough for our application.  To do better, we appeal to the results of Cipolloni, Erd\H{o}s and Schr\"{o}der \cite[Proposition 3.3, Case 1]{Cip}, which in our notation gives the covariance bound
$$ \cov( P_1( \lambda_{i+1} - \lambda_i) , P_2( \lambda_{j+1} - \lambda_j ) ) \ll_\eps n^{-\zeta_2} \|P_1 \|_{C^5} \|P_2 \|_{C^5}$$
whenever $i,j \in \mathtt{bulk}$, $|i-j| \geq n^{1-\zeta_1}$, and $P_1, P_2$ are bounded smooth compactly supported test functions, for some absolute constants $\zeta_1, \zeta_2 > 0$.  One can easily restrict this to the overwhelmingly probable event $E$ and conclude that
$$ \cov( P_1( \lambda_{i+1} - \lambda_i) , P_2( \lambda_{j+1} - \lambda_j ) | E ) \ll_\eps n^{-\zeta_2} \|P_1 \|_{C^5} \|P_2 \|_{C^5}.$$
By a suitable partition of unity, we then have the improved bound
$$ \cov( \lambda_{i+1} - \lambda_i, \lambda_{j+1} - \lambda_j | E ) \ll_\eps n^{-\zeta_2+o(1)}$$
on the entries of $M_{\mathtt{bulk}}$ far from the diagonal.  Applying Schur's test again, we now get the improved bound
$$ \| M_{\mathtt{bulk}}\|_{\mathrm{op}} \ll n^{1-\min(\zeta_1,\zeta_2)+o(1)},$$
and hence the quantity \eqref{again} may be bounded by
$$\ll n^{1 - \min(\zeta_1,\zeta_2,2/3)+o(1)} \E\left( |\nabla_{\mathtt{bulk}} \E( X_S | \lambda) |^2 + n |\nabla_{\mathtt{edge}} \E( X_S | \lambda) |^2 | E\right).$$
Thus, to prove \eqref{second-var}, it suffices to establish the bound
\begin{equation}\label{neat}
\E\left( |\nabla_{\mathtt{bulk}} \E( X_S | \lambda) |^2 + n |\nabla_{\mathtt{edge}} \E( X_S | \lambda) |^2 | E\right)
\ll n^{1+o(1)}.
\end{equation}
In the next section we will establish the bound
\begin{equation}\label{e-bound}
 \E (|\partial_{\lambda_i} \E( \lambda_j - \lambda_{j,n-1} |\lambda )|^2 | E ) \ll
n^{o(1)} (1 + n |\gamma_i - \gamma_j|)^{-4}
\end{equation} 
whenever $1 \leq i \leq n$ and $j \in \mathtt{bulk}$.  Taking square roots and summing over $j \in S$ using the triangle inequality, one obtains
$$ \E (|\partial_{\lambda_i} \E( X_S |\lambda )|^2 | E ) \ll n^{-2+o(1)}$$
for $i \in \mathtt{edge}$, and
$$ \E (|\partial_{\lambda_i} \E( X_S |\lambda )|^2 | E ) \ll n^{o(1)}$$
for $i \in \mathtt{bulk}$.  Summing in $i$, one obtains \eqref{neat} and thus \eqref{second-var}.

It thus remains to establish the bounds \eqref{isep}, \eqref{e-bound}.  This is the objective of the remaining sections of the paper.

\section{Determinantal process calculations}

In this section we fix $\la$ to be a deterministic element of $\Spec^\circ$, and let $A$ be a Hermitian matrix drawn uniformly at random amongst all matrices with eigenvalues $\la$.  We then let $x_1 \geq \dots \geq x_{n-1}$ be the eigenvalues of the top left $n-1 \times n-1$ minor of $A$.  In order to establish \eqref{isep}, \eqref{e-bound}, we would like to understand the mean and covariances of the gaps $\lambda_j - x_j$, as these random variables have the same distribution as $\lambda_j - \lambda_{j,n-1}$ conditioned to this choice of $\lambda$.  As it turns out, the theory of determinantal processes provide an explicit formula for these quantities:

\begin{proposition}[First and second moments]\label{moment-formulae}  With the above hypotheses, one has
\begin{equation}\label{first-moment}
 \E(\lambda_i - x_i) = \int_{I_i} Q_i(x)\ dx
\end{equation}
for all $1 \leq i \leq n$ and
\begin{equation}\label{second-moment}
\cov(\lambda_i - x_i, \lambda_j - \lambda_j) = \left( \int_{I_i} (1-Q_j(x))\ dx\right) \left( \int_{I_j} Q_i(x)\ dx \right)
\end{equation}
for all $1 \leq i < j < n$, where $I_j = I_{j,\lambda}$ is the interval $I_j \coloneqq [\lambda_{j+1},\lambda_j]$ and each $Q_j = Q_{j,\lambda}$ is the unique degree $n-1$ polynomial such that $Q_j(\la_i) = \one_{i\leq j}$ for $1 \leq i \leq n$.  More explicitly, by the Lagrange interpolation formula one has 
\beq Q_j(x) \coloneqq \sum_{i\leq j}\frac{\prod_{\ell \neq i}(x - \la_\ell)}{\prod_{\ell \neq i}(\la_i - \la_\ell)}.\eeq
\end{proposition}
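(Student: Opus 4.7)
The plan is to realize $(x_1,\dots,x_{n-1})$ conditional on $\lambda$ as a biorthogonal (hence determinantal) point process on $\R$, identify its correlation kernel explicitly in terms of the polynomials $Q_k$, and then extract both moment formulas from the resulting one- and two-point correlation functions by integration by parts.

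First I would rewrite the conditional density in biorthogonal form. By Proposition \ref{gt-rem}(iii) applied to the Gelfand--Tsetlin pattern of $A$, the full pattern is uniform on $\GT_{\diag(\lambda)\rel \ast}$; marginalizing over the bottom $n-2$ rows and invoking Proposition \ref{gt-rem}(ii) applied to the sub-pattern with top row $x$, the conditional distribution of $(x_1,\dots,x_{n-1})$ given $\lambda$ is supported on the interlacing polytope $\{\lambda_{i+1}\le x_i\le \lambda_i\}$ with density proportional to $V_{n-1}(x)$. Since the intervals $I_i$ are disjoint and decreasingly ordered, the interlacing indicator factors as $\det(\mathbf{1}_{I_i}(x_j))_{1\le i,j\le n-1}$, and $V_{n-1}(x)=\det(x_j^{k-1})$, so the density takes the biorthogonal ensemble form $\propto\det(\phi_k(x_j))\det(\psi_k(x_j))$ with $\phi_k(x)=x^{k-1}$ and $\psi_k(x)=\mathbf{1}_{I_k}(x)$. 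Standard biorthogonal-ensemble theory then gives that the process is determinantal with correlation kernel
\[
K(x,y) \;=\; \sum_{k=1}^{n-1} \mathbf{1}_{I_k}(x)\,\tilde\phi_k(y),
\]
where $\tilde\phi_k$ is the unique polynomial of degree $\le n-2$ satisfying the biorthogonality relation $\int_{I_\ell}\tilde\phi_k(y)\,dy=\delta_{k\ell}$.

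The key observation is that $\tilde\phi_k=Q_k'$: since $Q_k$ has degree $n-1$ (Lagrange interpolation through $n$ nodes), $Q_k'$ has degree $n-2$ and lies in the required span, and the fundamental theorem of calculus gives
\[
\int_{I_\ell} Q_k'(y)\,dy \;=\; Q_k(\lambda_\ell) - Q_k(\lambda_{\ell+1}) \;=\; \mathbf{1}_{\ell\le k} - \mathbf{1}_{\ell+1\le k} \;=\; \delta_{k\ell}.
\]
Hence $K(x,y)=\sum_{k=1}^{n-1}\mathbf{1}_{I_k}(x)\,Q_k'(y)$.

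With the kernel in hand, the remaining computations are routine. The one-point function gives $K(x,x)=Q_i'(x)$ for $x\in I_i$, so $\E[x_i]=\int_{I_i}x\,Q_i'(x)\,dx$; integrating by parts using $Q_i(\lambda_i)=1$ and $Q_i(\lambda_{i+1})=0$ produces the boundary term $\lambda_i$ and yields $\E[\lambda_i-x_i]=\int_{I_i}Q_i(x)\,dx$, i.e.\ \eqref{first-moment}. For the covariance with $i<j$, the two-point correlation $\rho_2(x,y)=K(x,x)K(y,y)-K(x,y)K(y,x)$ evaluates on $(x,y)\in I_i\times I_j$ to $Q_i'(x)Q_j'(y)-Q_i'(y)Q_j'(x)$, giving
\[
\cov(x_i,x_j) \;=\; -\Bigl(\int_{I_i} x\,Q_j'(x)\,dx\Bigr)\Bigl(\int_{I_j} y\,Q_i'(y)\,dy\Bigr).
\]
Using $Q_j(\lambda_i)=Q_j(\lambda_{i+1})=1$ for $i<j$, integration by parts converts the first factor to $\int_{I_i}(1-Q_j(x))\,dx$, while $Q_i(\lambda_j)=Q_i(\lambda_{j+1})=0$ makes the second equal $-\int_{I_j}Q_i(y)\,dy$. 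Combined with $\cov(\lambda_i-x_i,\lambda_j-x_j)=\cov(x_i,x_j)$, the two minus signs cancel and \eqref{second-moment} drops out. The only subtle step is spotting that the biorthogonal dual of $\{\mathbf{1}_{I_k}\}$ is $\{Q_k'\}$ rather than the monomials; once this is in place, everything else is mechanical bookkeeping with the prescribed values $Q_k(\lambda_\ell)=\mathbf{1}_{\ell\le k}$.
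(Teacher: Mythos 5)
Your proposal is correct and follows essentially the same route as the paper: both derive the conditional density $\propto V_{n-1}(x)\prod\mathbf{1}_{I_i}(x_i)$, identify the determinantal kernel $K(x,y)=\sum_k\mathbf{1}_{I_k}(x)Q_k'(y)$ by recognizing $\{Q_k'\}$ as the biorthogonal dual to $\{\mathbf{1}_{I_k}\}$, and finish with the same integration-by-parts computations on the one- and two-point functions. The only cosmetic difference is that the paper verifies the determinantal structure directly (showing $K$ is a projection and invoking the Gaudin lemma to match the normalized Vandermonde density) where you appeal to the general biorthogonal-ensemble framework as a black box.
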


\begin{proof}
By Proposition \ref{gt-rem}, each $x_i$ lies in $I_i$, with probability measure
\beq (n-1)! \frac{V_{n-1}(x)}{V_n(\la_1, \dots, \la_n)}\one_{I_1}(x_1)\dots \one_{I_{n-1}}(x_{n-1})dx_1\dots dx_{n-1}.\lab{eq:2.1}\eeq
As observed by Metcalfe \cite{metcalfe}, this law also has a determinantal form involving the polynomials $Q_j$ as follows.  From the fundamental theorem of calculus, the derivatives $Q'_j$ are degree $n-2$ polynomials that obey the mean zero conditions 
\beq \int_{I_i} Q_j'(x)dx = \one_{i=j}\lab{eq:2.3}\eeq
for $1 \leq i, j \leq n-1$, and thus form a basis of the polynomials of degree at most $n-2$. 
If one introduces the kernel $K \colon \R\times\R \ra \R$ by the formula
$$K(x, y) \coloneqq \sum_{j=1}^{n-1} \one_{I_j}(x)Q_j'(y)$$ then from \eqref{eq:2.3} we conclude that $K$ is a rank $n-1$ projection in the sense that $$\int_\R K(x, y)K(y, z) dy = K(x, z),$$ and $\int_\R K(y, y) dy = n-1$ for all $x, z \in \R$.
By the Gaudin lemma \cite{mehta}, we then have 
$$\int_{\R^{n-1}} \det(K(x_i, x_j))_{1 \leq i, j \leq n-1} dx_1 \dots dx_{n-1} = (n-1)!.$$ On the other hand this determinant is symmetric and supported on the $(n-1)!$ permutations of $I_1 \times \dots \times I_{n-1},$ hence 
$$\int_{I_1 \times \dots \times I_{n-1}} \det(K(x_i, x_j))_{1 \leq i, j \leq n-1}dx_1\dots dx_{n-1} = 1.$$
Because $Q_1', \dots, Q_{n-1}'$ is a basis of the polynomials of degree $n-2,$ we see that for $(x_1, \dots, x_{n-1})$ in $I_1 \times \dots \times I_{n-1},$ the determinant $$\det(K(x_i, x_j))_{1 \leq i, j\leq n-1} = \det(Q_i'(x_j))_{1 \leq i,j\leq n-1}$$ is a scalar multiple of the Vandermonde determinant, while also having a total mass $1$; comparing this with the probability measure (\ref{eq:2.1}), we see that this measure has the determinantal form 
$$\det(K(x_i, x_j))_{1 \leq i, j \leq n-1}\ dx_1\dots dx_{n-1}.$$ 
In particular (by another application of the Gaudin lemma) the one-point correlation function is $K(x, x)$ and the two point correlation function is $K(x, x) K(y, y) - K(x, y)K(y, x)$.  The identity \eqref{first-moment} then follows from integration by parts:
\beqs \E\la_i - x_i & = & \int_{I_i}(\la_i - x) K(x, x)\ dx\\
& = & \int_{I_i}(\la_i - x)Q_i'(x)\ dx\\
& = & \int_{I_i}Q_i(x)\ dx.\eeqs
A similar, but slightly lengthier computation gives \eqref{second-moment}:
 \beq \cov(\la_i - x_i, \la_j - x_j) & = & \int_{I_i}\int_{I_j} (\la_i - x)(\la_j - y)(K(x, x)K(y, y)- K(x, y) K(y, x))\ dx dy\nonumber\\
 &  & - \int_{I_i}(\la_i - x)K(x, x)dx\int_{I_j}(\la_j - y)K(y, y)\  dy\nonumber\\
 & = & - \int_{I_1}\int_{I_j}(\la_i - x) (\la_j - y) K(x, y) K(y, x)\ dx dy\nonumber\\
 & = & - \int_{I_i}\int_{I_j}(\la_i - x)(\la_j - y)Q_i'(y)Q_j'(x)\ dx dy\nonumber\\
 & = & \left(\int_{I_i}(1 - Q_j(y))dy\right)\left(\int_{I_j} Q_i(x)\ dx\right).\lab{eq:cov}\eeq
\end{proof}

To estimate $Q_j$ and its derivatives in $\lambda$, it will be convenient to use the following contour integration representation.

\begin{lemma}[Contour integral representation]\label{contour}  Let $P = P_\lambda$ denote the degree $n$ polynomial
$$P(x) \coloneqq\prod_{k=1}^n(x - \la_k).$$ 
Then for any $1 \leq j \leq n$ and $\sigma$ in the interior of $I_j$, one has
$$Q_j(x) = - \frac{1}{2\pi i} \int_{\sigma - i\infty}^{\sigma + i \infty} \frac{P(x)}{P(z)(x-z)}\ dz$$ 
for $x < \sigma$ and
$$1 - Q_j(x) = \frac{1}{2\pi i} \int_{\sigma - i\infty}^{\sigma + i\infty} \frac{P(x)}{P(z)(x-z)}\ dz$$ 
for $x > \sigma$.
\end{lemma}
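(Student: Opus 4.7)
The plan is a straightforward residue calculation. Writing $f(z) \coloneqq \frac{P(x)}{P(z)(x-z)}$ and treating $x$ as a parameter, the integrand (viewed as a meromorphic function of $z$) has only simple poles: one at each $z = \lambda_k$ with residue $\frac{P(x)}{P'(\lambda_k)(x-\lambda_k)}$, and one at $z = x$ with residue $-\frac{P(x)}{P(x)} = -1$. Moreover $|f(z)| = O(|z|^{-n-1})$ as $|z|\to\infty$ (since $\deg P = n$), so the integrand is absolutely integrable on any vertical line avoiding the $\lambda_k$, and the contribution of a large semicircular arc vanishes in the limit.

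Next I would close the vertical contour $\{\operatorname{Re} z = \sigma\}$ to the left with a large semicircle. Since $\sigma \in I_j = [\lambda_{j+1},\lambda_j]$ and the eigenvalues are strictly ordered $\lambda_1 > \dots > \lambda_n$, the poles in the open left half-plane $\{\operatorname{Re} z < \sigma\}$ consist precisely of $\lambda_{j+1},\dots,\lambda_n$, together with the pole $z=x$ when $x < \sigma$ (respectively, $z=x$ lies in the right half-plane when $x > \sigma$). Applying the residue theorem (with the usual counterclockwise orientation of the closed contour) gives
\begin{equation*}
\frac{1}{2\pi i}\int_{\sigma-i\infty}^{\sigma+i\infty} \frac{P(x)}{P(z)(x-z)}\,dz
= \sum_{k>j}\frac{P(x)}{P'(\lambda_k)(x-\lambda_k)} - \mathbf{1}_{x<\sigma}.
\end{equation*}

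To convert the sum over $k>j$ into $Q_j$, I would invoke the Lagrange partition of unity: the Lagrange interpolant of the constant function $1$ at the $n$ nodes $\lambda_1,\dots,\lambda_n$ is again $1$, so
\begin{equation*}
\sum_{k=1}^n \frac{P(x)/(x-\lambda_k)}{P'(\lambda_k)} = 1,
\end{equation*}
and therefore $\sum_{k>j} \frac{P(x)/(x-\lambda_k)}{P'(\lambda_k)} = 1 - Q_j(x)$. Substituting into the two cases $x<\sigma$ and $x>\sigma$ separately yields the two displayed formulas in the lemma, after multiplying by $-1$ in the first case.

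There is no real obstacle here beyond bookkeeping: the only care needed is (i) verifying that the decay rate $|z|^{-n-1}$ really does kill the semicircular contribution (which uses $n\geq 1$, automatic since $n\geq 2$ in this paper), (ii) keeping straight which poles get enclosed in each of the two cases $x\lessgtr\sigma$, and (iii) a sign check on the orientation when closing to the left. If desired one could equally well close to the right and re-derive the same identity from the complementary set of residues; this provides a useful internal consistency check.
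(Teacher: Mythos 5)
Your proof is correct and follows essentially the same route as the paper: identify the poles and residues of $\frac{P(x)}{P(z)(x-z)}$ as a meromorphic function of $z$ and apply the residue theorem by closing the contour, which is equivalent to the partial fraction decompositions of $Q_j/P$ and $(1-Q_j)/P$ that the paper invokes before citing the residue theorem. Your write-up supplies more detail than the paper's terse argument; incidentally, the paper's displayed identity $\frac{1-Q_j(x)}{P(x)} = -\sum_{i>j}\frac{1}{P'(\lambda_i)(x-\lambda_i)}$ has a sign typo (the correct sign is $+$), consistent with the Lagrange partition-of-unity identity you use.
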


\begin{proof} Observe that the rational function $Q_j(x)/P(x)$ decays at infinity and has poles at $\la_i,  i \leq j$ with residues $1/P'(\la_i)$,  thus we have the partial fractions decomposition
$$\frac{Q_j(x)}{P(x)} = \sum\limits_{i\leq j} \frac{1}{ P'(\la_i)(x-\la_i)}.$$
Similarly
$$\frac{1-Q_j(x)}{P(x)} = -\sum\limits_{i> j} \frac{1}{ P'(\la_i)(x-\la_i)}.$$
The claim now follows from the residue theorem.
\end{proof}

Now let $1 \leq i,j \leq n$.  Using the identity
$$ \frac{P(x)}{P(z)(x-z)} = \prod_{1 \leq k \leq n: k \neq i} \frac{x-\la_k}{z-\la_k} \left( \frac{1}{z-\la_i} + \frac{1}{x-z} \right)$$
we have
$$ \partial_{\la_i} \frac{P(x)}{P(z)(x-z)} = \frac{1}{(z-\la_i)^2} \prod_{1 \leq k \leq n: k \neq i} \frac{x-\la_k}{z-\la_k} $$
and so on differentiating under the integral sign we obtain
\begin{equation}\label{diff}
 \partial_{\la_i} Q_j(x) = - \frac{1}{2\pi i} \int_{\sigma - i\infty}^{\sigma + i \infty} \frac{1}{(z-\la_i)^2} \prod_{1 \leq k \leq n: k \neq i} \frac{x-\la_k}{z-\la_k}\ dz 
\end{equation}
whenever $\sigma$ is in the interior of $I_j$ and $x \neq \sigma$.  By continuity the restriction $x \neq \sigma$ can then be dropped.  Setting $x=\sigma$, which implies $|z-\la_k| \geq |x-\la_k|$, we conclude from the triangle inequality that
\begin{equation}\label{laq}
|\partial_{\la_i} Q_j(x)| \leq \frac{1}{2\pi} \int_{\R} \frac{1}{|x-\la_i+it|^2}\ dt = \frac{1}{2 |x - \la_i|}.
\end{equation}

\section{Conclusion of the argument}

We can now prove \eqref{isep} and \eqref{e-bound}.

We begin with \eqref{e-bound}.  Fix $1 \leq i \leq n$ and $j \in \mathtt{bulk}$.  By Proposition \ref{moment-formulae}, the left-hand side of \eqref{e-bound} is
$$ \E \left(|\partial_{\lambda_i} \int_{I_{j,\lambda}} Q_{j,\lambda}(x)\ dx|^2\big | E \right).$$
We divide the interval $I_{j,\lambda} = [\lambda_{j+1},\lambda_j]$ into the left half $I_{j,\lambda}^l = [\lambda_{j+1},\frac{\lambda_{j+1}+\lambda_j}{2}]$ and the right half $I_{j,\lambda}^r = [\frac{\lambda_{j+1}+\lambda_j}{2}, \lambda_j]$.  We shall just establish the bound
\begin{equation}\label{ej}
 \E \left(|\partial_{\lambda_i} \int_{I_{j,\lambda}^r} Q_{j,\lambda}(x)\ dx|^2\big | E \right) \ll
n^{o(1)} (1 + n |\gamma_i - \gamma_j|)^{-4};
\end{equation}
similar arguments apply for the left half $I_{j,\lambda}^l$, and the claim \eqref{e-bound} will then follow from the triangle inequality.

The quantity $\int_{I_{j,\lambda}^r} Q_{j,\lambda}(x)\ dx$ is unchanged if all of the $\lambda$ are shifted by the same constant.  In particular
$$ \sum_{i=1}^n \partial_{\lambda_i} \int_{I_{j,\lambda}^r} Q_{j,\lambda}(x)\ dx = 0.$$
Thus it will suffice to establish \eqref{ej} under the additional hypothesis $i \neq j$, as the excluded case $i=j$ is then handled by the triangle inequality.  The point of this reduction is that it generates a separation between $\lambda_i$ and $I_{j,\lambda}^r$.  (For the left half $I_{j,\lambda}^l$, one would instead enforce the hypothesis $i \neq j+1$.)

Henceforth $i \neq j$.  If $i$ is also not equal to $j+1$, we of course have
$$ \partial_{\lambda_i} \int_{I_{j,\lambda}^r} Q_{j,\lambda}(x)\ dx = \int_{I_{j,\lambda}^r} \partial_{\lambda_i} Q_{j,\lambda}(x)\ dx.$$
For $i = j+1$, we acquire an additional term of $\frac{1}{2} Q_{j,\lambda}(\frac{\la_{j+1}+\la_j}{2})$.  Thus, it will suffice to establish the bounds
\begin{equation}\label{ej1}
 \E \left(|\int_{I_{j,\lambda}^r} \partial_{\lambda_i} Q_{j,\lambda}(x)\ dx|^2 | E\right ) \ll
n^{o(1)} (1 + n |\gamma_i - \gamma_j|)^{-4}
\end{equation}
whenever $i \neq j$, as well as the additional bound
\begin{equation}\label{ej2}
 \E \left(|Q_{j,\lambda}\left(\frac{\la_{j+1}+\la_j}{2}\right)|^2\big | E \right) \ll n^{o(1)}.
\end{equation}

By \eqref{la-1}, $I_{j,\lambda}$ is contained in a fixed interval $I_j^*$ of length $n^{o(1)}$ centered around $\sigma_{\lambda} \sqrt{n} \gamma_j$, thus by Cauchy--Schwarz
$$ \left|\int_{I_{j,\lambda}^r} \partial_{\lambda_i} Q_{j,\lambda}(x)\ dx\right|^2 
\ll n^{o(1)} \int_{I_j^*} |\partial_{\lambda_i} Q_{j,\lambda}(x)|^2 \one_{x \in I^r_{j,\lambda}}\ dx.$$
By the triangle inequality, \eqref{ej1} will then follow from the pointwise bound
\begin{equation}\label{ej3}
  \E\left( |\partial_{\lambda_i} Q_{j,\lambda}(x)|^2 \one_{x \in I^r_{j,\lambda}}\big | E \right) \ll n^{o(1)} (1 + n |\gamma_i - \gamma_j|)^{-4}
	\end{equation}
for each $x \in I_j^*$.  

First consider the case where $|i-j| \geq \log^{2C_0} n$.  Applying \eqref{diff} with $\sigma=x$ and the triangle inequality, we have
$$
 \partial_{\la_i} Q_j(x) \ll \int_{\R} \frac{1}{(x-\la_i)^2} \prod_{1 \leq k \leq n: k \neq i} \frac{|x-\la_k|}{|x-\la_k+it|}\ dt.$$
From \eqref{la-1}, we can compute $|x-\la_i| = n^{o(1)} (1 + n |\gamma_i - \gamma_j|)$ and $\prod_{1 \leq k \leq n: k \neq i} \frac{|x-\la_k|}{|x-\la_k+it|} \ll \frac{n^{o(1)}}{1+t^2}$, and \eqref{ej3} follows in this case.

Now suppose $|i-j| < \log^{2C_0} n$, so that the right-hand side of \eqref{ej3} simplifies to $n^{o(1)}$.  By \eqref{laq}, we can bound 
$$ |\partial_{\lambda_i} Q_{j,\lambda}(x)|^2 \one_{x \in I^r_{j,\lambda}} \ll \frac{1}{\lambda_{j} - \lambda_{j+1}} + \frac{1}{\lambda_{j-1} - \lambda_j}$$
(by splitting into the cases $i<j$ and $i>j$).  Thus it will suffice to establish the bound
$$ \E\left(\frac{1}{(\lambda_j - \lambda_{j+1})^2}| E\right) \ll n^{o(1)}$$
(the claim for $\frac{1}{\lambda_{j-1} - \lambda_j}$ is of course similar).  Letting $K(x,y)$ be the determinantal kernel of the rescaled GUE matrix $A$ it suffices to show that
$$ \int_{I_j^*} \int_{I_j^*} \frac{K(x,x) K(y,y) - K(x,y) K(y,x)}{|x-y|^2}\ dx dy \ll n^{o(1)}.$$
But from the well known local smooth convergence of this kernel to a rescaled Dyson sine process (see e.g., \cite{mehta}), the integrand is $O(1)$, and the claim follows.  This proves \eqref{ej3}.

Finally, we need to show \eqref{ej2}.  Write $x = \frac{\la_{j+1}+\la_j}{2}$.  By Lemma \ref{contour} and the Plemelj formula, we can write
$$ Q_{j,\lambda}(x) = \frac{1}{2} - \frac{1}{2\pi} \mathrm{p.v.} \int_\R \frac{P(x)}{P(x+it)} \frac{dt}{t}.$$
Using \eqref{la-1}, \eqref{la-2} (which among other things makes $P(x)/P(x+it)$ very close to $1$ for $|t| \leq \exp(-\log^{2C_0} n)$, bounded in magnitude by $1$ for all $t$, and decaying fast for $|t| \geq n^{C_0}$ (say)), one can calculate that this integral is $O(n^{o(1)})$, giving \eqref{ej2}.  This completes the proof of \eqref{e-bound}.

Now we show \eqref{isep}.  Let $2\eps n \leq i,j \leq (1-2\eps) n$. If $|i-j| \leq \log^{2C_0} n$ then the claim follows from the crude bounds
$$ \lambda_i - \lambda_{i,n-1}, \lambda_j - \lambda_{j,n-1} = O(n^{o(1)})$$
from interlacing and \eqref{la-1}, so we may assume by symmetry that $j-i > \log^{2C_0} n$.  Applying \eqref{second-moment}, it suffices to show the pointwise bound
$$ \int_{I_j} \int_{I_i} (1-Q_j(x)) Q_i(y)\ dx dy \ll \frac{n^{o(1)}}{(j-i)^2}.$$
Applying Lemma \ref{contour}, we can write the left-hand side as
$$ \frac{1}{4\pi^2} \int_{I_j} \int_{I_i} \int_{x-i\infty}^{x+i\infty} \int_{y-i\infty}^{y+i\infty} \frac{P(x) P(y)}{P(w) P(z) (y-z)(x-w)}\ dw dz dx dy.$$
From the separation of $i,j$, we have the lower bounds
$$ |y-z|, |x-w| \gg j-i.$$
The quantity $|P(x)|/|P(z)|$ is bounded by $1$, and from \eqref{la-1} it is also bounded by $O(n^{o(1)}/|\mathrm{Im} z|^2)$ when $\mathrm{Im} z \geq \log^{2C_0} n$.  Similarly for $|P(y)|/|P(w)|$.  Also, from \eqref{la-1} the intervals $I_i, I_j$ have length $O(n^{o(1)})$, and the claim follows.

\section{Open questions}
\ben 

\item What can be said about the concentration of random real valued augmented hives with general boundary conditions? If they do concentrate,  what are the possible subsequential limit shapes? In particular, is the limit unique? In the limit when one of the boundary conditions is more spread out than the other, the limit shape should essentially degenerate to fractional free convolution powers \cite{dima}, in analogy with Proposition \ref{gt-rem}(iv).
\item Do the local statistics of the random augmented GUE hive process converge (either in the bulk or the edge) to a known limit?  In the case of the random Gelfand--Tsetlin process, the limit is known to essentially be the Boutillier bead process \cite{boutillier}; see \cite{metcalfe}.
\item Do random integer valued augmented hives with general boundary conditions concentrate? Again, if they do concentrate,  what are the possible subsequential limit shapes? In particular, is the limit unique?
\een

\appendix

\section{Verification of the octahedron recurrence}\label{octahedron-app}

In this appendix we review the relevant material from \cite{KT2} needed to establish Theorem \ref{octahedron}.  We will assume that the reader has some familiarity with the material in that paper.

The first step is to view the four triangles $T, T', U, U'$ as faces of the tetrahedron 
$$ \mathtt{tet} \coloneqq \{ [x,y,z,w] \in \Z^4: x,y,z,w \geq 0; x+y+z+w=n \}$$
(which has vertices $(n,0,0,0), (0,n,0,0), (0,0,n,0), (0,0,0,n)$) as follows:

\begin{itemize}
\item A point $(i,j) \in T$ can be identified with the point $(0, j-i, n-j, i)$, thus identifying $T$ with the triangle $[T]$ with vertices $(0,n,0,0)$, $(0,0,n,0)$, $(0,0,0,n)$.
\item A point $(i,j) \in T'$ can be identified with the point $(i-j, 0, n-i, j)$, thus identifying $T'$ with the triangle $[T']$ with vertices $(n,0,0,0)$, $(0,0,n,0)$, $(0,0,0,n)$.
\item A point $(i,j) \in U$ can be identified with the point $(n-j,n-i,0,i+j-n)$, thus identifying $U$ with the triangle $[U]$ with vertices $(n,0,0,0)$, $(0,n,0,0)$, $(0,0,0,n)$.
\item A point $(i,j) \in U'$ can be identified with the point $(i, j, n-i-j,0)$, thus identifying $U$ with the triangle $[U']$ with vertices $(n,0,0,0)$, $(0,n,0,0)$, $(0,0,0,n)$.
\end{itemize}
Note that these identifications are consistent along the shared edge $T \cap T'$ of $T$ and $T'$, the shared edge $U \cap U'$ of $U$ and $U'$, and on the boundary of $\{0,\dots,n\}^2$.  The function $\tilde h \colon T \cup T' \to \R$ can then, by abuse of notation, be thought of as a function from the upper faces $[T] \cup [T']$ of $\mathtt{tet}$ to $\R$, while the function $\tilde k \colon U \cup U' \to \R$ can similarly be thought of as a function from the lower faces $[U] \cup [U']$ of $\mathtt{tet}$ to $\R$, using the above identifications.

As discussed in \cite[\S 5]{KT2}, the tetrahedron $\mathtt{tet}$ decomposes into a certain number of unit tetrahedra (in two different orientations), as well as some unit octahedra with vertices
$$ [x+1,y+1,z,w], [x+1,y,z+1,w], [x+1,y,z,w+1], [x,y+1,z+1,w], [x,y+1,z,w+1], [x,y,z+1,w+1].$$
We can place a partial ordering on $\mathtt{tet}$ by declaring $[x,y,z,w] \prec [x',y',z',w']$ if $x \geq x'$, $y \geq y'$, $z \leq z'$, and $w \leq w'$, so that $[x+1,y+1,z,w]$ as the minimal vertex of this octahedron and $[x,y,z+1,w+1]$ is the maximal vertex.  The \emph{octahedron recurrence} is then a relation between $\tilde h \colon [T] \cup [T'] \to \R$ and $\tilde k \colon [T] \cup [T'] \to \R$, namely that there exists a common extension $o \colon \mathtt{tet} \to \R$ of $\tilde h, \tilde k$ to the entire simplex $\mathtt{tet}$ that obeys the \emph{(tropical) octahedron rule}
$$ o( [x,y,z+1,w+1] ) = \max( o([x+1,y,z+1,w]) + o([x,y+1,z,w+1]),$$ $$ o([x+1,y,z,w+1]) + o([x,y+1,z+1,w])) - o([x+1,y+1,z,w])$$
for every octahedron in $\mathtt{tet}$.  As established in \cite[\S 5]{KT2}, this rule uniquely determines $\tilde h$ as a function of $\tilde k$, and is a bijection between pairs of hives $(k,k')$ and pairs of hives $(h,h')$ with compatible boundary data as indicated in Section \ref{recurrence-sec}.

Let $v = (i,j) \in T \cup T'$, and let $b \in [T] \cup [T']$ be the corresponding point in the lower faces of $\mathtt{tt}$.  
As discussed in \cite[\S 7]{KT2}, to evaluate $\tilde h$ at $b$, one must ``excavate'' all of the tetrahedra and octahedra in the ``light cone''
$$ \{ c \in \mathtt{tet}: c \succ b \}$$
and in particular one needs to evaluate $\tilde k$ at all points $(i',j')$ whose corresponding element in $[U] \cup [U']$ lies above $b$ in the partial ordering $\prec$ on $\mathtt{tet}$.  This requirement can be stated more explicitly as follows:
\begin{itemize}
\item  If $(i,j) \in T$ and $(i',j') \in U$, then we require $(0, j-i, n-j, i) \prec (n-j',n-i',0,i'+j'-n)$.
\item  If $(i,j) \in T$ and $(i',j') \in U'$, then we require $(i-j, 0, n-i, j) \prec (i', j', n-i'-j',0)$.
\item  If $(i,j) \in T'$ and $(i',j') \in U$, then we require $(0, j-i, n-j, i) \prec (n-j',n-i',0,i'+j'-n)$.
\item  If $(i,j) \in T'$ and $(i',j') \in U'$, then we require $(i-j, 0, n-i, j) \prec (i', j', n-i'-j',0)$.
\end{itemize}
A tedious but routine calculation then shows that these constraints are equivalent to $(i',j')$ lying in the hexagon $\hexagon_v$ defined in
Definition \ref{octa-def}.

In \cite[\S 7]{KT2}, a dual graph $G_b$ to this hexagon $\hexagon_v$ is then formed.  The vertices of this graph correspond the unit triangles in $\hexagon_v$, except that the upper and lower boundary triangles associated to a given boundary edge have been identified into a single vertex.  Two vertices in this graph are adjacent if they can correspond to unit triangles that share a common edge, or equivalently if there is a lozenge comprising of two unit triangles associated to the indicated vertices.  Thus, there is a one-to-one correspondence between edges in $G_b$ and lozenges in $\hexagon_v$.  Interior vertices of $\hexagon_v$ correspond to interior faces of $G_b$ (which are a rhombus for vertices on the equator, and a hexagon otherwise), while boundary vertices of $\hexagon_v$ can be identified with ``external faces'' of $G_b$, of which the two equatorial ones are ``external rhombi'' and the remainder ``external hexagons''; see Figure \ref{fig:dual}.

\begin{figure}
\begin{center}
\includegraphics[scale=0.40]{./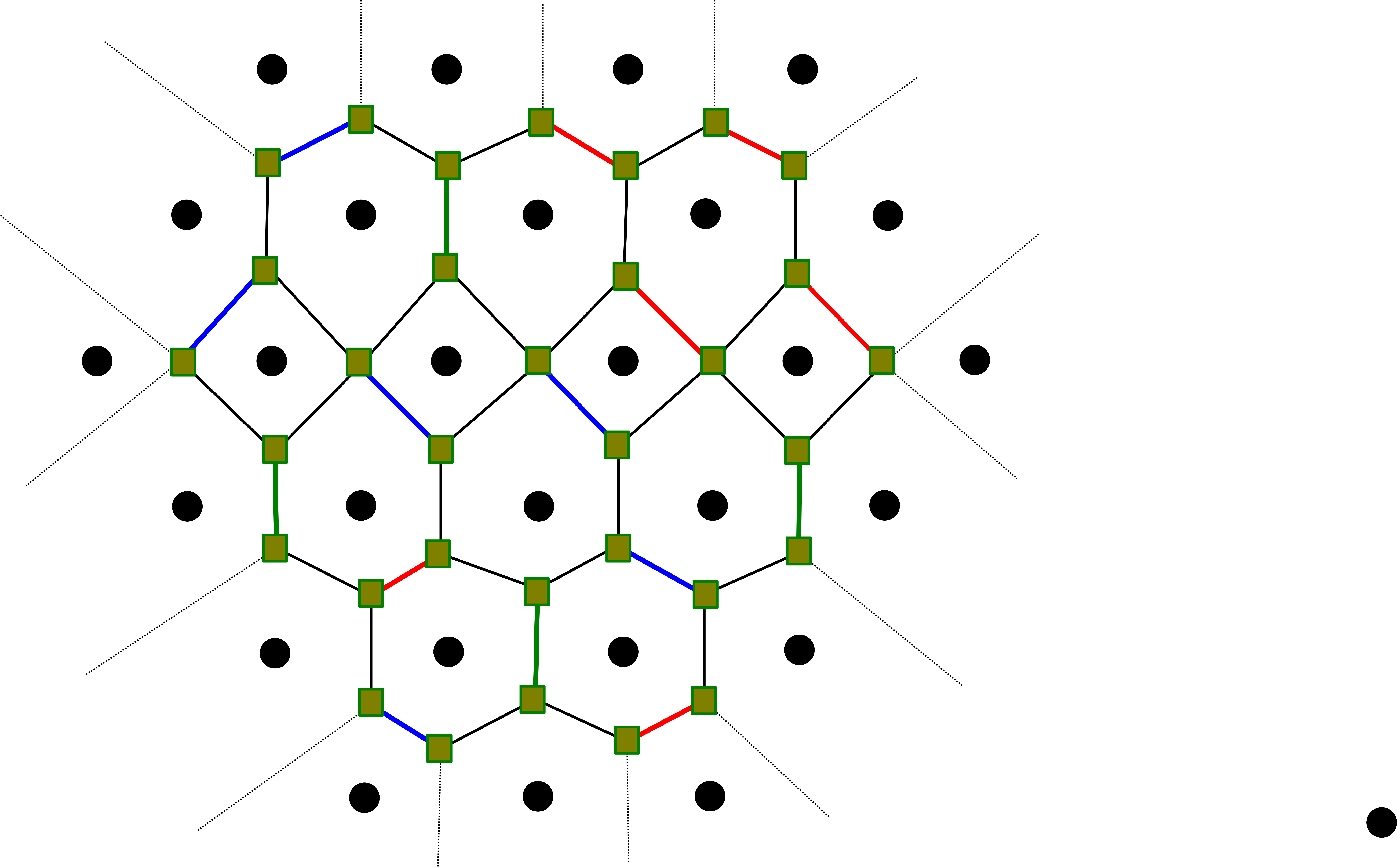}
\caption{The dual graph $G_b$ of the hexagon $\hexagon_v$ appearing in Figures \ref{fig:standard}, \ref{fig:typical}, where the vertices are depicted as olive squares; the vertices of $\hexagon_v$ then are associated to interior or external faces of $G_b$ as indicated.  The colored edges represent a matching of $G_b$, associated to the lozenge tiling in Figure \ref{fig:typical}. Compare with \cite[Figures 6, 7]{KT2}.}\label{fig:dual}
\end{center}
\end{figure}

Following the work of Speyer \cite{Speyer}, one then considers perfect matchings $\mu$ on the graph $G_b$, that is to say collections of edges where each vertex is incident to precisely one edge in the matching.  By the above discussion, each edge in the matching corresponds to a lozenge, and every non-boundary triangle will be covered by precisely one lozenge, with each boundary edge being incident to precisely one boundary triangle covered by a lozenge.  In other words, there is a one-to-one correspondence between perfect matchings on the graph $G_b$ and lozenge tilings of $\hexagon_v$. Again, we refer the reader to Figure \ref{fig:dual} for an example of this correspondence.

To each perfect matching $\mu$ on $G_b$, one can associate a linear form (or ``monomial'', in non-tropical notation)
$$ m_\mu = \sum_{w \in \hexagon_v} c_w \tilde k(w),$$
where for each $w \in \hexagon_v$ (corresponding to an interior rhombus, interior hexagon, or external face in $G_b$), the coefficient $c_w \in \Z$ is defined as follows:
\begin{itemize}
\item If $w$ is an interior hexagon, then $c_w$ is two minus the number of edges in $\mu$ adjacent to $w$.
\item If $w$ is an interior rhombus or external hexagon, then $c_w$ is one minus the number of edges in $\mu$ adjacent to $w$.
\item If $w$ is an exterior rhombus, then $c_w=0$.
\end{itemize}

The main theorem of Speyer \cite{Speyer}, reproduced in \cite[\S 7]{KT2} in the case of excavating a tetrahedron and using ``tropical'' notation, then asserts that the result $\tilde h(v)$ of the octahedron recurrence applied at $v$ is equal to the maximum of the $m_\mu$ over all choices of perfect matchings $\mu$.  To obtain Theorem \ref{octahedron}, it remains to show that the linear form $m_\mu$ defined here agrees with the weight $w_\Xi$ of the associated lozenge tiling $\Xi$ defined in \eqref{weight-form}.  It suffices to show that, for each $w \in \hexagon_v$, the coefficients of $\tilde k(w)$ for $m_\mu$ and $w_\Xi$ match.  This is accomplished as follows:

\begin{itemize}
\item If $w$ is an non-equatorial interior point of $\hexagon_v$ (thus generating an interior hexagon), then the coefficient of $w_\Xi$ is given by $\frac{1}{3} a - \frac{1}{3} o$, where $a, o$ are the number of acute and obtuse angles subtended by the components of the lozenge tiling at $w$, while the coefficient of $m_\mu$ is $2-o$.  Since the total angle around $w$ is $2\pi$, we have $a\frac{\pi}{3} + o \frac{2\pi}{3} = 2\pi$, hence $\frac{1}{3} a - \frac{1}{3} o = 2-o$ as required.
\item If $w$ is an equatorial interior point of $\hexagon_v$ (thus generating an interior rhombus), then the coefficient of $w_\Xi$ is $\frac{1}{3} (a-2) + 0 \times 1 - \frac{1}{3} (o+1)$ (since of the two boundary triangles in the tiling adjacent to $w$ and subtending an acute angle, one has a coefficient of $0$ and the other has a coefficient of $-\frac{1}{3}$), while the $m_\mu$ coefficient is $1-o$.  Again we have $a\frac{\pi}{3} + o \frac{2\pi}{3} = 2\pi$, giving $\frac{1}{3} (a-2) + 0 \times 1 - \frac{1}{3} (o+1) = 1-o$ as required.
\item If $w$ is a boundary point of $\hexagon_v$ that is not a vertex $A, B, C, D, E, F$, then the coefficient of $w_\Xi$ is $\frac{1}{3} a - \frac{1}{3} o$ and the coefficient of $m_\mu$ is $1-o$.  In this case the total angle $a\frac{\pi}{3} + o \frac{2\pi}{3}$ is equal to $\pi$, so 
we have $\frac{1}{3} a - \frac{1}{3} o = 1-o$ as required.
\item If $w$ is one of the vertices $B, D, E, F$, then the coefficient of $w_\Xi$ is $\frac{1}{3} (a+1) - \frac{1}{3} o$ and the coefficient of $m_\mu$ is $1-o$.  The total angle $a\frac{\pi}{3} + o \frac{2\pi}{3}$ is equal to $\frac{2\pi}{3}$, so we have $\frac{1}{3} (a+1) - \frac{1}{3} o = 1-o$ as required.
\item If $w$ is equal to $A$ or $C$, one easily checks that the coefficient of either $w_\Xi$ or $m_\mu$ vanish.
\end{itemize}

This establishes Theorem \ref{octahedron}.

\bibliographystyle{alpha}

\end{document}